\numberwithin{equation}{section}
\theoremstyle{plain}
\newtheorem{thm}{Theorem}[section]
\newtheorem{fact}[thm]{Fact}
\newtheorem{lem}[thm]{Lemma}
\newtheorem{assumption}[thm]{Assumption}
\theoremstyle{definition}
\theoremstyle{remark}
\newtheorem{rem}[thm]{Remark}
 \newcommand{\addQEDstyle}[2]{\AtBeginEnvironment{#1}{\pushQED{\qed}\renewcommand{\qedsymbol}{#2}}\AtEndEnvironment{#1}{\popQED}}
\renewcommand\footnotemark{}
\titleformat{\subsection}[runin]
{\normalfont\normalsize\bfseries}{\S\thesubsection.}{1ex}{}[.]
\newcommand{\Ric}{\operatorname{Ric}}
\newcommand{\Vol}{\operatorname{Vol}}
\newcommand{\grad}{\operatorname{grad}}
\newcommand{\Div}{\operatorname{div}}
\newcommand{\curl}{\operatorname{curl}}
\renewcommand{\tilde}{\widetilde}
\newcommand{\ir}{\mathrm{i}}
\newcommand{\dr}{\mathrm{d}}
\newcommand{\er}{\mathrm{e}}
\newcommand{\Dir}{\mathrm{Dir}}
\newcommand{\free}{\mathrm{free}}
\newcommand{\Liu}{\mathrm{Liu}}
\newcommand{\plane}{\mathrm{plane}}
\renewcommand*{\backrefalt}[4]{%
\ifcase #1 %
No citations%
\or
\ding{43}~p.~#2%
\else
\ding{43}~pp.~#2%
\fi}
\begin{document}

\title{Two-term spectral asymptotics in linear elasticity%
\footnote{This version corrects typesetting misprints in formula \eqref{eq:seck} in the previous arXiv and published versions. These misprints do not affect the rest of the paper. We are grateful to A. Henrot for pointing these misprints out to us.}
\footnote{For the sake of transparency, authors' computational scripts are available at \url{https://michaellevitin.net/elasticity.html}.}
\footnote{Authors' typesetting may differ from that in the journal version.}
\footnote{{\bf MSC2020: }Primary 35P20. Secondary 35Q74, 74J05.}%
\footnote{{\bf Keywords: } elasticity, eigenvalue counting function, Dirichlet conditions, free boundary conditions, Rayleigh waves.}%
\\[1ex]
\small Journal of Geometric Analysis \textbf{33} (2023), article 242. DOI: \href{https://doi.org/10.1007/s12220-023-01269-y}{10.1007/s12220-023-01269-y}. arXiv: \href{http://arxiv.org/abs/2207.13636}{2207.13636v3}. 
}
\author{
Matteo Capoferri\thanks{\textbf{MC}:
Maxwell Institute for Mathematical Sciences and
Department of Mathematics, Heriot-Watt University, Edinburgh EH14 4AS, UK; m.capoferri@hw.ac.uk,
\url{https://mcapoferri.com}.
}
\and
Leonid Friedlander\thanks{\textbf{LF}:
Department of Mathematics, The University of Arizona, 617 N.~Santa Rita Ave.~P.O.~Box 210089 Tucson, AZ 85721-0089 USA;
friedlan@math.arizona.edu,
\url{https://www.math.arizona.edu/people/lfriedla}.
}
\and
Michael Levitin\thanks{\textbf{ML}:
Department of Mathematics and Statistics,
University of Reading,
Pepper Lane,
Whiteknights,
Reading RG6~6AX,
UK;
M.Levitin@reading.ac.uk,
\url{https://www.michaellevitin.net}.
}
\and
Dmitri Vassiliev\thanks{\textbf{DV}:
Department of Mathematics,
University College London,
Gower Street,
London WC1E~6BT,
UK;
D.Vassiliev@ucl.ac.uk,
\url{http://www.ucl.ac.uk/\~ucahdva/}.
}}

%%%% this removes footnote "crosses" by the authors' names
\renewcommand\footnotemark{}
%%%%
\date{12 April 2024}%; \LaTeX{ed} \today}

\maketitle
\begin{abstract}
We establish the two-term spectral asymptotics for boundary value problems of linear elasticity on a smooth compact Riemannian manifold of arbitrary dimension. We also present some illustrative examples and give a historical overview of the subject. In particular, we correct erroneous results published in [J.~Geom.~Anal. \textbf{31} (2021), 10164--10193].
\end{abstract}

{\small \tableofcontents}

\section{Statement of the problem and main results}\label{sec:mainresults}

The aim of this paper is to find explicitly the second asymptotic term for the eigenvalue counting functions of the operator of linear elasticity on a smooth $d$-dimensional Riemannian manifold with boundary, equipped with either Dirichlet or free boundary conditions. The main body of the paper is devoted to the proof of our main result,  stated later in this section as Theorem~\ref{thm:maintheorem}, using the strategy based on an algorithm due to Vassiliev \cite{Vas84, SaVa}.

Our paper is in part motivated by incorrect statements published in \cite{Liu}, on two-term asymptotic expansions for the heat kernel of the operator of linear elasticity in the same setting.  A discussion of  \cite{Liu} is presented in Remark~\ref{rem:Liu} and continues further in Appendix~\ref{appendix:Liu}.  In two further Appendices \ref{sec:ex2d} and  \ref{sec:ex3d} we provide an ``experimental" verification of the correctness of our results, by numerically computing the quantities in question for explicit examples in dimensions two and three.

\

Let $(\Omega, g)$ be a smooth compact connected
$d$-dimensional ($d\ge2$) Riemannian manifold with boundary $\partial\Omega\ne\emptyset$.  We consider the linear elasticity operator $\mathcal{L}$ acting on vector fields $\mathbf{u}$ and defined by\footnote{We use standard tensor notation and employ Einstein's summation convention throughout.}
\begin{equation}\label{eq:elasticity}
 (\mathcal{L}\mathbf{u})^\alpha:=-\mu\left(\nabla_\beta\nabla^\beta u^\alpha+{\Ric^\alpha}_\beta u^\beta\right)-(\lambda+\mu)\nabla^\alpha\nabla_\beta u^\beta.
\end{equation}
Here and further on $\nabla$ is the Levi--Civita connection associated with $g$, 
$\Ric$ is Ricci curvature, and $\lambda$, $\mu$ are real constants called \emph{Lam\'e coefficients} which are assumed to satisfy\footnote{Without loss of generality, the second condition in \eqref{eq:mulambda} may be replaced by a physically meaningless but less restrictive condition $\lambda+\mu>0$, in which case \eqref{eq:alphabound} becomes $\alpha\in(0,1)$.}
\begin{equation}\label{eq:mulambda}
\mu>0, \qquad d\lambda+2\mu>0.
\end{equation}
We will also use the parameter
\begin{equation}
\label{eq:alpha}
\alpha:=\frac{\mu}{\lambda+2\mu}.
\end{equation}
Subject to \eqref{eq:mulambda}, we have
\begin{equation}
\label{eq:alphabound}
\alpha\in\left(0, \frac{d}{2(d-1)}\right)\subseteq(0,1). 
\end{equation}
We also assume that the material density of the elastic medium, $\rho_\mathrm{mat}\,$, is constant. More precisely, we assume that $\rho_\mathrm{mat}$ differs from the Riemannian density $\sqrt{\det g}\ $ by a constant positive factor.

We complement \eqref{eq:elasticity} with suitable boundary conditions, for example the \emph{Dirichlet} condition 
\begin{equation}\label{eq:Dir}
\mathbf{u}|_{\partial\Omega}=0
\end{equation}
(sometimes called the \emph{clamped edge} condition in the physics literature), or the \emph{free boundary} condition
\begin{equation}\label{eq:trac}
\mathcal{T}\mathbf{u}|_{\partial\Omega}=0
\end{equation}
(sometimes called the \emph{free edge} or \emph{zero traction} condition in the physics literature,
and also the \emph{Neumann} condition\footnote{We will not call \eqref{eq:trac} the Neumann condition in order to avoid confusion with the erroneous ``Neumann'' condition in \cite{Liu}.}),
where $\mathcal{T}$ is the boundary traction operator defined by
\begin{equation}\label{eq:T}
(\mathcal{T}\mathbf{u})^\alpha:=\lambda n^\alpha \nabla_\beta u^\beta  +\mu\left(n^\beta \nabla_\beta u^\alpha + n_\beta \nabla^\alpha u^\beta\right).
\end{equation}
Here $\mathbf{n}$ is the exterior unit normal vector to the boundary $\partial\Omega$.\footnote{One can also consider \emph{mixed} boundary value problems by imposing zero traction conditions only in some of the $d$ directions, and Dirichlet conditions in the remaining directions; for an example of such a problem motivated by applications see e.g.~\cite{LMS}.} 

It is easy to verify that  subject to the restrictions \eqref{eq:mulambda} the operator $\mathcal{L}$ is elliptic.  Its principal symbol has eigenvalues 
%\begin{equation}\label{eq:psev}
\[
(\lambda+2\mu)\|\xi\|^2\quad\text{(with multiplicity one)},\qquad  \mu\|\xi\|^2\quad\text{(with multiplicity $d-1$)}.
\]
%\end{equation}
Here and further on $\|\xi\|$ denotes the Riemannian norm of the covector $\xi$.
The quantities $\sqrt{\lambda+2\mu}$ and $\sqrt{\mu}$ are known as the \emph{speeds of propagation of longitudinal and transverse
elastic waves}, respectively.

It is also easy to verify that either of the boundary conditions \eqref{eq:Dir} and \eqref{eq:trac} is of the Shapiro--Lopatinski type \cite{KruTuo} for $\mathcal{L}$, and therefore the corresponding boundary value problems are elliptic\footnote{We remark that the ellipticity of the corresponding boundary value problems may be broken if $\lambda$, $\mu$ lie outside the  range $\mu>0$, $\lambda+\mu>0$. This is the subject of a distinct but very interesting \emph{Cosserat problem}, see e.g.~\cite{SimvWa} or \cite{Levcosserat}.}. 

The boundary conditions \eqref{eq:Dir} and \eqref{eq:trac} are linked by \emph{Green's formula} for the elasticity operator, 
\begin{equation}\label{eq:green} 
\left(\mathcal{L}\mathbf{u}, \mathbf{u}\right)_{L^2(\Omega)}=\mathcal{E}[\mathbf{u}]-\left(\mathcal{T}\mathbf{u}, \mathbf{u}\right)_{L^2(\partial\Omega)}\,,
\end{equation}
where the quadratic form
\begin{equation}\label{eq:E}
\mathcal{E}[\mathbf{u}] := \int_\Omega \left(\lambda\left(\nabla_\alpha u^\alpha\right)^2+\mu\left(\nabla_\alpha u_\beta+\nabla_\beta u_\alpha\right)\nabla^\alpha u^\beta\right)\, \sqrt{\det g}\ \dr x
\end{equation}
equals twice the \emph{potential energy} of elastic deformations associated  with displacements $\mathbf{u}$, and is non-negative for all $\mathbf{u}\in H^1(\Omega)$ and strictly positive for all $\mathbf{u}\in H^1_0(\Omega)$.
The structure of the quadratic functional \eqref{eq:E} of linear elasticity is the result of certain geometric assumptions, see  \cite[formula (8.28)]{part1}, as well as \cite[Example 2.3 and formulae (2.5a), (2.5b) and (4.10e)]{diffeo}.

Consider the Dirichlet eigenvalue problem 
\begin{equation}\label{eq:Lspec}
\mathcal{L}\mathbf{u}=\Lambda \mathbf{u},
\end{equation}
subject to the boundary condition \eqref{eq:Dir}, where $\Lambda$ denotes the spectral parameter.
The spectral parameter $\Lambda$ has the physical meaning $\,\Lambda=(\rho_\mathrm{mat}/\sqrt{\det g}\,)\,\omega^2$, where $\rho_\mathrm{mat}$ is the material density, $\sqrt{\det g}\ $ is the Riemannian density and $\omega$ is the angular natural frequency of oscillations of the elastic medium.
With account of ellipticity and Green's formula \eqref{eq:green}, it is a standard exercise to show that one can associate with \eqref{eq:Lspec}, \eqref{eq:Dir}, the spectral problem for a self-adjoint elliptic operator $\mathcal{L}_\Dir$ with form domain $H^1_0(\Omega)$; we omit the details. The spectrum of the problem is discrete and consists of isolated eigenvalues 
\begin{equation}\label{eq:Direv}
(0<)\Lambda_1^\Dir\le \Lambda_2^\Dir\le\dots
\end{equation}
enumerated with account of multiplicities and accumulating to $+\infty$. A similar statement holds for the free edge boundary problem \eqref{eq:Lspec}, \eqref{eq:trac}, which is associated with a self-adjoint operator $\mathcal{L}_\free$ whose form domain is   $H^1(\Omega)$; we denote its eigenvalues by
\[
(0\le)\Lambda_1^\free\le \Lambda_2^\free\le\dots.
\]

We associate with the spectrum \eqref{eq:Direv} of the Dirichlet elasticity problem on $\Omega$ the following functions. Firstly, we introduce the \emph{eigenvalue counting function} 
\begin{equation}\label{eq:NDir}
\mathcal{N}_\Dir(\Lambda):=\#\left\{n: \Lambda_n^\Dir<\Lambda\right\},
\end{equation}
defined for $\Lambda\in\mathbb{R}$. Obviously, $\mathcal{N}_\Dir(\Lambda)$ is monotone increasing in $\Lambda$, takes integer values, and is identically zero for $\Lambda\le\Lambda_1^\Dir$. An analogous eigenvalue counting function of the free boundary problem will be denoted $\mathcal{N}_\free(\Lambda)$\footnote{In what follows, we will write $\mathcal{N}(\Lambda)$ if a corresponding statement is true for either $\mathcal{N}_\Dir(\Lambda)$ or $\mathcal{N}_\free(\Lambda)$ irrespective of the boundary conditions.}.

Secondly, we introduce the \emph{partition function}, or the \emph{trace of the heat semigroup},
\begin{equation}\label{eq:ZDir}
\mathcal{Z}_\Dir(t):=\operatorname{Tr}\,\mathrm{e}^{-t\mathcal{L}_\Dir}=\sum_{m=1}^\infty \mathrm{e}^{-t\Lambda_m^\Dir}\,,
\end{equation}
defined for $t>0$ and monotone decreasing in $t$. The free boundary partition function $\mathcal{Z}_\free(t)$ is defined in the same manner\footnote{In what follows, we will write $\mathcal{Z}(t)$ if a corresponding statement is true for either $\mathcal{Z}_\Dir(t)$ or $\mathcal{Z}_\free(t)$ irrespective of the boundary conditions.}.

The existence of asymptotic expansions of $\mathcal{N}(\Lambda)$ as $\lambda\to+\infty$ and of $\mathcal{Z}(t)$ as $t\to 0^+$, and precise expressions for the coefficients of these expansions in terms of  the geometric invariants of $\Omega$, for either the Dirichlet or the free boundary conditions, and similar questions for the Dirichlet and Neumann Laplacians, have been a topic of immense interest among mathematicians and physicists since the publication of the first edition of Lord Rayleigh's\footnote{The 1904 Nobel Laureate (Physics).} \emph{The Theory of Sound} in 1877 \cite{Ra77}. A detailed historical review of the  field is beyond the scope of this article; we refer the interested reader to \cite{SaVa}, \cite{ANPS}, and \cite{Ivr}, and references therein. 

Before stating our main results, we summarise below some known facts concerning the asymptotics of  \eqref{eq:NDir} and \eqref{eq:ZDir}, and their free boundary analogues.  Further on, we always assume that $(\Omega,g)$ is a $d$-dimensional Riemannian manifold satisfying the conditions stated at the beginning of the article.

\begin{fact}\label{fact:1}
For any $(\Omega, g)$ we have
\begin{equation}\label{eq:N1term}
\mathcal{N}(\Lambda)=a\,\Vol_d(\Omega)\,\Lambda^{d/2} + o\left(\Lambda^{d/2}\right)\quad\text{as}\quad\Lambda\to+\infty,
\end{equation} 
where
%the $\widetilde{C}$ depends only on the dimension $d$ and Lam\'{e} parameters $\lambda$ and $\mu$, and is given by
\begin{equation}\label{eq:tildeC} 
a = \frac{1}{(4\pi)^{d/2}\Gamma\left(1+\frac d2\right)} \left(\frac{d-1}{\mu^{d/2}}+\frac{1}{(\lambda+2\mu)^{d/2}}\right)
\end{equation}  
is the \emph{Weyl constant} for linear elasticity, and $\Vol_d(\Omega)$ denotes the Riemannian volume of $\Omega$.
\end{fact}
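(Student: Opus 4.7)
The plan is to invoke the general one-term Weyl asymptotic law for self-adjoint elliptic systems of second order on a compact Riemannian manifold with boundary. Because ellipticity of $\mathcal{L}$ and the Shapiro--Lopatinski property of both boundary conditions \eqref{eq:Dir} and \eqref{eq:trac} have already been verified, the hypotheses of the classical theory (as developed, for example, in \cite{SaVa}) are met, and the leading-order term is insensitive to the choice of boundary condition. First I would appeal to the Weyl formula which states that for a self-adjoint elliptic second-order operator whose matrix-valued principal symbol $A_2(x,\xi)$ has real eigenvalues $h_1(x,\xi),\dots,h_m(x,\xi)$ (counted with multiplicity),
\[
\mathcal{N}(\Lambda)=\frac{1}{(2\pi)^d}\sum_{j=1}^{m}\Vol\bigl\{(x,\xi)\in T^*\Omega:h_j(x,\xi)<\Lambda\bigr\}+o\!\left(\Lambda^{d/2}\right).
\]

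The remaining computation is algebraic. The principal symbol of $\mathcal{L}$, as recorded in the excerpt, has eigenvalues $(\lambda+2\mu)\|\xi\|^2$ with multiplicity one and $\mu\|\xi\|^2$ with multiplicity $d-1$. For any positive constant $c$,
\[
\Vol\bigl\{(x,\xi)\in T^*\Omega: c\|\xi\|^2<\Lambda\bigr\}=\Vol_d(\Omega)\,\omega_d\,(\Lambda/c)^{d/2},
\]
where $\omega_d=\pi^{d/2}/\Gamma(1+d/2)$ is the Euclidean volume of the unit ball in $\mathbb{R}^d$. Adding the longitudinal contribution and the $(d-1)$ identical transverse contributions gives
\[
\mathcal{N}(\Lambda)=\frac{\omega_d\,\Vol_d(\Omega)}{(2\pi)^d}\left(\frac{d-1}{\mu^{d/2}}+\frac{1}{(\lambda+2\mu)^{d/2}}\right)\Lambda^{d/2}+o\!\left(\Lambda^{d/2}\right),
\]
and the identity $(2\pi)^d/\omega_d=(4\pi)^{d/2}\Gamma(1+d/2)$ reproduces exactly the constant \eqref{eq:tildeC}.

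The only genuine obstacle is justifying the one-term Weyl law for a matrix-valued operator whose principal symbol has a nonsimple eigenvalue (the shear eigenvalue $\mu\|\xi\|^2$ has multiplicity $d-1>1$ as soon as $d\ge 3$), under the free boundary condition \eqref{eq:trac} as well as the Dirichlet one. One route is to invoke the hyperbolic-propagation-plus-Tauberian machinery of \cite{SaVa} directly for elliptic systems. A more elementary route, sufficient for this one-term estimate, is Dirichlet--Neumann bracketing: cover $\Omega$ by small coordinate patches, freeze the metric and the zeroth-order Ricci term in \eqref{eq:elasticity} on each patch, diagonalise the resulting constant-coefficient system into a scalar longitudinal block and a $(d-1)$-fold transverse block with wave speeds $\sqrt{\lambda+2\mu}$ and $\sqrt{\mu}$ respectively, apply the classical scalar Weyl law block by block, and pass to the limit as the mesh size tends to zero. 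Either route yields \eqref{eq:N1term} with the constant \eqref{eq:tildeC}.
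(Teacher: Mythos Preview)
Your proposal is correct and follows the route the paper itself implicitly indicates: the paper does not give an independent proof of Fact~\ref{fact:1} but simply cites Weyl~\cite{We15} and, in \S\ref{sec:algorithm}, records the general phase-space formula $A=(2\pi)^{-d}\int_{T^*\Omega} n(x,\xi,1)\,\dr x\,\dr\xi$ for the leading Weyl coefficient of an elliptic system, which is exactly the expression you evaluate. Your explicit computation of the phase-space volumes and the constant is accurate, and your remark that the one-term law does not require the billiard non-periodicity hypothesis (needed only for the second term) is a worthwhile clarification the paper leaves implicit.
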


This immediately implies 
\begin{fact}\label{fact:2}
For any $(\Omega, g)$ we have
\begin{equation}\label{eq:Z1term}
\mathcal{Z}(t)=\widetilde{a}\,\Vol_d(\Omega)\,t^{-d/2} +o\left(t^{-d/2}\right)\qquad\text{as }t\to 0^+,
\end{equation} 
with 
\begin{equation}\label{eq:C}
\widetilde{a}=\Gamma\left(1+\frac{d}{2}\right)\,a\,. 
\end{equation}
\end{fact}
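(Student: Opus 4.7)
The plan is to deduce Fact~\ref{fact:2} from Fact~\ref{fact:1} by a standard Abelian argument, i.e.\ the easy direction of Karamata's Tauberian theorem. Using the monotone counting function $\mathcal{N}$, I first rewrite the partition function as a Laplace--Stieltjes integral,
\[
\mathcal{Z}(t)=\sum_{m=1}^\infty \er^{-t\Lambda_m}=\int_{0^-}^\infty \er^{-t\Lambda}\,\dr\mathcal{N}(\Lambda),
\]
which is well defined for every $t>0$ since the eigenvalues accumulate only at $+\infty$ and the Weyl-type bound $\mathcal{N}(\Lambda)=O(\Lambda^{d/2})$ from Fact~\ref{fact:1} ensures the integral converges. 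Integration by parts (the boundary term at $\Lambda=\infty$ vanishes because $\er^{-t\Lambda}\mathcal{N}(\Lambda)\to0$, and $\mathcal{N}$ vanishes near the origin) yields the Riemann-integral representation
\[
\mathcal{Z}(t)=t\int_{0}^\infty \er^{-t\Lambda}\,\mathcal{N}(\Lambda)\,\dr\Lambda.
\]

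Next I would substitute the asymptotic decomposition $\mathcal{N}(\Lambda)=a\,\Vol_d(\Omega)\,\Lambda^{d/2}+R(\Lambda)$, where $R(\Lambda)=o(\Lambda^{d/2})$ as $\Lambda\to+\infty$. The leading contribution is computed directly via the Gamma integral:
\[
t\,a\,\Vol_d(\Omega)\int_0^\infty \er^{-t\Lambda}\Lambda^{d/2}\,\dr\Lambda = a\,\Vol_d(\Omega)\,\Gamma\!\left(1+\tfrac{d}{2}\right)t^{-d/2},
\]
which identifies the claimed coefficient $\widetilde{a}=\Gamma(1+d/2)\,a$. To handle the remainder, I would fix $\varepsilon>0$, choose $\Lambda_0$ so large that $|R(\Lambda)|\le\varepsilon\Lambda^{d/2}$ for $\Lambda\ge\Lambda_0$, and split $\int_0^\infty \er^{-t\Lambda}R(\Lambda)\,\dr\Lambda$ at $\Lambda_0$. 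The piece over $[0,\Lambda_0]$ is uniformly bounded, so multiplied by $t$ it is $O(t)=o(t^{-d/2})$; the tail is dominated by $\varepsilon\,a\,\Vol_d(\Omega)\,\Gamma(1+d/2)\,t^{-d/2}$ via the same Gamma integral. Letting $\varepsilon\to0$ after $t\to0^+$ gives the required $o(t^{-d/2})$ control.

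No step here is a serious obstacle: the only things that need to be checked are that the Laplace--Stieltjes integral is well-defined and admits the integration-by-parts reformulation, both of which follow from the monotonicity of $\mathcal{N}$, its vanishing at the left endpoint, and the one-term Weyl bound already supplied by Fact~\ref{fact:1}. Alternatively one could simply invoke Karamata's Tauberian theorem to reach the same conclusion in one step.
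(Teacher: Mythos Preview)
Your argument is correct and is essentially the approach the paper has in mind: the paper does not give a detailed proof but simply remarks that Fact~\ref{fact:2} ``immediately follows'' from Fact~\ref{fact:1} via the Laplace transform identity \eqref{eq:LT}, and you have supplied the standard Abelian computation that makes this precise. (One harmless slip: in your tail estimate the bound should be $\varepsilon\,\Gamma(1+d/2)\,t^{-d/2}$, without the extra factor $a\,\Vol_d(\Omega)$.)
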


The one-term asymptotic law \eqref{eq:Z1term}, \eqref{eq:C} was established, at a physical level of rigour, by P. Debye\footnote{The 1936 Nobel Laureate (Chemistry).} \cite{Debye} in 1912\footnote{Debye (and many other physicists following him) was in fact studying not the partition function but a closely related quantity called the \emph{specific heat} of $\Omega$. The asymptotic behaviour of these two quantities follow from each other; we omit the details here and further on in order not to overload this paper with physical background.\label{footnote:physics}}. The one-term asymptotics   \eqref{eq:N1term}, \eqref{eq:tildeC} was rigorously proved\footnote{Strictly speaking, for $d=3$ in the Euclidean case only, but the generalisation is trivial in view of subsequent advances.} by H. Weyl in 1915 \cite{We15}.  We note that \eqref{eq:Z1term}, \eqref{eq:C} immediately follow from \eqref{eq:N1term}, \eqref{eq:tildeC}  since the partition function $\mathcal{Z}(t)$ is just the Laplace transform of the (distributional) derivative of the counting function $\mathcal{N}(\Lambda)$,
\begin{equation}\label{eq:LT}
\mathcal{Z}(t) =\int_{-\infty}^\infty \mathrm{e}^{-t \Lambda} \mathcal{N}'(\Lambda) \,\mathrm{d}\Lambda\,.
\end{equation} 

We also have, see e.g. \cite{Grubb} and also \cite[Remark 4.1(ii)]{Liu}, the following
\begin{fact}\label{fact:3}
Let  $\aleph\in\{\Dir, \free\}$. Then
%\footnote{One can continue the expansion  in \eqref{eq:Z2term} in further half-powers of $t$\textcolor{red}{Are we sure there are no logarithms here? Can we provide a reference?}} 
\begin{equation}\label{eq:Z2term}
\mathcal{Z}_\aleph(t)=\widetilde{a} \,\Vol_d(\Omega) t^{-d/2} + \widetilde{b}_\aleph \Vol_{d-1}(\partial\Omega) t^{-(d-1)/2} + o\left(t^{-(d-1)/2}\right)\quad\text{as}\quad t\to 0^+,
\end{equation} 
with some constants $\widetilde{b}_\aleph$.  The quantity $\Vol_{d-1}(\partial\Omega)$ is the volume of the boundary $\partial \Omega$ as a $(d-1)$-dimensional Riemannian manifold with metric induced by $g$.
\end{fact}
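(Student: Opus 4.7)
My plan is to derive \eqref{eq:Z2term} from the heat kernel asymptotic theory for elliptic boundary value problems in the form developed by Greiner, Seeley, and Grubb. Because $\mathcal{L}$ is an elliptic self-adjoint second-order system and both \eqref{eq:Dir} and \eqref{eq:trac} satisfy the Shapiro--Lopatinski condition for $\mathcal{L}$, the heat semigroup admits a complete asymptotic expansion
\[
\mathcal{Z}_\aleph(t)\sim\sum_{j=0}^\infty c_{\aleph,j}\,t^{(j-d)/2}\qquad\text{as } t\to 0^+,
\]
where each coefficient $c_{\aleph,j}$ is a sum of an interior integral over $\Omega$ and a boundary integral over $\partial\Omega$, both built locally and universally from the symbols of $\mathcal{L}$ and of the boundary operator.

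First, I would construct a parametrix $Q(t)$ for $\er^{-t\mathcal{L}_\aleph}$ by matching an interior parametrix, built from the free-space heat kernel of the constant-coefficient elasticity operator in Riemann normal coordinates, with a boundary parametrix obtained from the half-space model problem---the one given by freezing the principal symbol of $\mathcal{L}$ and the principal part of the boundary operator at a point $y\in\partial\Omega$ in Fermi coordinates and then straightening $\partial\Omega$. The interior parametrix trace reproduces the leading Weyl term $\widetilde{a}\,\Vol_d(\Omega)\,t^{-d/2}$ already supplied by Fact~\ref{fact:2}; the boundary parametrix trace, computed by partial Fourier transform in the tangential covector and an explicit residual integration in the normal direction, yields
\[
t^{-(d-1)/2}\int_{\partial\Omega}f_\aleph(y)\,\dr\sigma(y) + O\bigl(t^{-(d-2)/2}\bigr),
\]
for some smooth density $f_\aleph$ on $\partial\Omega$ depending only on the frozen data at $y$.

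Next I would argue that $f_\aleph$ is constant. Since $\lambda,\mu$ are constants and $\rho_\mathrm{mat}/\sqrt{\det g}$ is a constant, the half-space model problem at any boundary point $y$ is built from the \emph{same} data, namely $\lambda,\mu,d$ and the choice of boundary operator. The model problem is invariant under Euclidean motions parallel to the boundary and under the parabolic scaling $(x,t)\mapsto (rx,r^2 t)$; combining these symmetries with homogeneity of the trace functional forces $f_\aleph(y)$ to be independent of $y$. Setting $\widetilde{b}_\aleph:=f_\aleph$ then produces \eqref{eq:Z2term}.

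The main obstacle I anticipate is the remainder estimate $\er^{-t\mathcal{L}_\aleph}-Q(t) = o(t^{-(d-1)/2})$ in trace norm. This is complicated by the fact that the principal symbol of $\mathcal{L}$ has the \emph{two} distinct eigenvalues $\mu\|\xi\|^2$ and $(\lambda+2\mu)\|\xi\|^2$, so the half-space model problem is a coupled system with two propagation speeds whose associated boundary layers interact via reflection and mode conversion, a phenomenon absent in the scalar (Laplace) case. However, precisely this class of systems is handled by Grubb's pseudodifferential calculus for Shapiro--Lopatinski elliptic boundary value problems, which supplies the required sharp control on the Duhamel-type corrections to $Q(t)$; I would therefore invoke that calculus to close the argument rather than reconstruct the remainder analysis by hand.
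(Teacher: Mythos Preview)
Your proposal is correct and follows essentially the same route as the paper, which does not give its own proof of this fact but simply cites Grubb's functional calculus for pseudodifferential boundary problems (and a remark in \cite{Liu}); you have in effect sketched the mechanism behind that citation. One minor remark: your argument that the boundary density $f_\aleph$ is constant---because the frozen half-space model at each boundary point depends only on $\lambda,\mu,d$ and the boundary operator---is the local/universality counterpart of the global rescaling argument the paper uses separately in Fact~\ref{fact:6}, so you have folded two of the paper's facts into one step.
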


%We postpone the discussion of the expressions for $\widetilde{b}_\Dir$ and $\widetilde{b}_\free$ until \S\ref{sec:Liu}. 

We note that the expansions \eqref{eq:Z2term} do not in themselves imply the existence of two-term asymptotic formulae for $\mathcal{N}_\aleph(\Lambda)$. However, formula \eqref{eq:LT} implies the following
\begin{fact}\label{fact:4}
Let  $\aleph\in\{\Dir, \free\}$, and suppose that we have 
\begin{equation}\label{eq:N2term}
\mathcal{N}_\aleph(\Lambda)=a\, \Vol_d(\Omega)\Lambda^{d/2} + b_\aleph \Vol_{d-1}(\partial{\Omega}) \Lambda^{(d-1)/2} + o\left(\Lambda^{(d-1)/2}\right)\quad\text{as}\quad\Lambda\to+\infty,
\end{equation} 
with some constant $b_\aleph$. Then  
\begin{equation}\label{eq:BvstildeB}
\widetilde{b}_\aleph=\Gamma\left(1+\frac{d-1}{2}\right) b_\aleph. 
\end{equation}
\end{fact}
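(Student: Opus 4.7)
The plan is to insert the hypothesised two-term counting-function expansion \eqref{eq:N2term} into the Laplace-transform identity \eqref{eq:LT}, evaluate the contributions of the explicit power-law leading terms using
\begin{equation*}
\int_0^\infty \mathrm{e}^{-t\Lambda}\,\mathrm{d}\bigl(\Lambda^\beta\bigr)=\Gamma(\beta+1)\,t^{-\beta},\qquad \beta>0,
\end{equation*}
and verify that the $o\bigl(\Lambda^{(d-1)/2}\bigr)$ remainder contributes only $o\bigl(t^{-(d-1)/2}\bigr)$ to $\mathcal{Z}_\aleph(t)$ as $t\to 0^+$. Matching the resulting expansion against \eqref{eq:Z2term} (supplied by Fact~\ref{fact:3}) then forces \eqref{eq:BvstildeB} by uniqueness of the asymptotic coefficients.

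Concretely, I would decompose $\mathcal{N}_\aleph(\Lambda)=P(\Lambda)+R(\Lambda)$, with
\[
P(\Lambda):=a\,\Vol_d(\Omega)\,\Lambda_+^{d/2}+b_\aleph\,\Vol_{d-1}(\partial\Omega)\,\Lambda_+^{(d-1)/2}
\]
(one-sided power functions, vanishing for $\Lambda<0$), and $R:=\mathcal{N}_\aleph-P$. Because $d\ge 2$, both exponents are $\ge 1/2$, so $P$ is continuous at the origin with $P(0)=0$; combined with $\mathcal{N}_\aleph(\Lambda)\equiv 0$ for $\Lambda<\Lambda_1^\aleph$ this gives $R(0^+)=0$ and $R$ of bounded variation on compacts. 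Applying the Gamma-function identity termwise to the distributional derivative of $P$ reproduces exactly
\[
\widetilde a\,\Vol_d(\Omega)\,t^{-d/2}+\Gamma\bigl(1+(d-1)/2\bigr)\,b_\aleph\,\Vol_{d-1}(\partial\Omega)\,t^{-(d-1)/2},
\]
in agreement with Fact~\ref{fact:2} and already isolating the candidate value of $\widetilde b_\aleph$.

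The only step requiring a modicum of care is the remainder estimate. Integration by parts (Riemann--Stieltjes) gives
\[
\int_0^\infty \mathrm{e}^{-t\Lambda}\,R'(\Lambda)\,\mathrm{d}\Lambda \;=\; t\int_0^\infty \mathrm{e}^{-t\Lambda}\,R(\Lambda)\,\mathrm{d}\Lambda,
\]
the boundary contributions vanishing thanks to $R(0^+)=0$ and the polynomial growth bound $|R(\Lambda)|=O(\Lambda^{d/2})$ set against the exponential decay of $\mathrm{e}^{-t\Lambda}$. Given $\varepsilon>0$, choose $\Lambda_0$ with $|R(\Lambda)|\le\varepsilon\,\Lambda^{(d-1)/2}$ for all $\Lambda\ge \Lambda_0$ and split the latter integral at $\Lambda_0$: the low-$\Lambda$ piece is bounded uniformly in $t$, hence yields $O(t)=o\bigl(t^{-(d-1)/2}\bigr)$, while the high-$\Lambda$ piece, once multiplied by the prefactor $t$, is bounded by $\varepsilon\,\Gamma\bigl(1+(d-1)/2\bigr)\,t^{-(d-1)/2}$. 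Letting $\varepsilon\to 0$ delivers the required error bound and, upon comparing coefficients with \eqref{eq:Z2term}, the identity \eqref{eq:BvstildeB}. This is simply the Abelian direction of the Laplace transform, and I do not anticipate any genuine obstacle; the delicate Tauberian converse --- deducing \eqref{eq:N2term} from \eqref{eq:Z2term} --- is precisely what the bulk of the paper must establish by other means.
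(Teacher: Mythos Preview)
Your proposal is correct and follows exactly the route the paper indicates: the paper does not spell out a proof of Fact~\ref{fact:4} at all, merely stating that it is implied by the Laplace-transform identity \eqref{eq:LT}, and your argument supplies precisely the standard Abelian computation (termwise transform of the power-law terms via the Gamma integral, plus a split-integral estimate for the $o$-remainder) that makes this implication rigorous. Your closing remark is slightly off --- the paper does not attempt any Tauberian converse, but instead establishes \eqref{eq:N2term} directly under the billiards hypothesis of Fact~\ref{fact:5} via the algorithm of \S\ref{sec:algorithm} --- but this does not affect the proof itself.
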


In general, the validity of two-term asymptotic expansions \eqref{eq:N2term} is still an open question (as it is for the scalar Dirichlet or Neumann Laplacian). However, similarly to the scalar case, there exist sufficient conditions which guarantee that   \eqref{eq:N2term} hold. These conditions are expressed in terms of the corresponding \emph{branching Hamiltonian billiards} on the cotangent bundle $T^*\Omega$, see \cite{Vas86} for precise statements.
\begin{fact}\label{fact:5}
Suppose that $(\Omega, g)$ is such that the corresponding billiards is neither \emph{dead-end} nor \emph{absolutely periodic}. Then \eqref{eq:N2term} holds for both the Dirichlet and the free boundary conditions.
\end{fact}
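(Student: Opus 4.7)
The plan is to deduce the two-term asymptotics \eqref{eq:N2term} from the singularity structure of the wave propagator $U(t):=\er^{-\ir t\sqrt{\mathcal{L}_\aleph}}$ via a Fourier Tauberian argument. First I would realise $\mathcal{L}_\aleph$ as a self-adjoint operator (which is justified by Green's formula \eqref{eq:green} together with the ellipticity of the boundary value problem), define its square root by the spectral theorem, and rewrite
\[
\mathcal{N}_\aleph(\Lambda) = \# \{ n : \sqrt{\Lambda_n^\aleph} < \sqrt{\Lambda}\},
\]
so that two-term asymptotics for $\mathcal{N}_\aleph(\Lambda)$ as $\Lambda \to +\infty$ correspond to two-term asymptotics in the variable $\tau = \sqrt{\Lambda} \to +\infty$ for the counting function of $\sqrt{\mathcal{L}_\aleph}$. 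By a standard Tauberian theorem of Hörmander--Safarov--Vassiliev type, the latter follow from the singularity analysis of $\operatorname{Tr}(\hat{\rho}(t) U(t))$ at $t = 0$ together with a smallness estimate of this trace for $t$ in a punctured neighbourhood of the origin.

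Next I would construct a parametrix for $U(t)$ modulo smoothing operators. Because the principal symbol of $\mathcal{L}$ has two eigenvalues of constant multiplicities, namely $(\lambda+2\mu)\|\xi\|^2$ with multiplicity one and $\mu\|\xi\|^2$ with multiplicity $d-1$, the parametrix decomposes microlocally into a longitudinal component with characteristic speed $\sqrt{\lambda+2\mu}$ and a transverse component with characteristic speed $\sqrt{\mu}$. In the interior this is a matrix-valued Fourier integral operator associated with the two Hamiltonian flows; near $\partial\Omega$ one must append boundary-layer contributions propagating reflected waves. Imposing \eqref{eq:Dir} or \eqref{eq:trac} at the boundary produces the branching rule: an incident longitudinal (respectively transverse) ray generates reflected rays of \emph{both} types, so that the underlying ray dynamics is a \emph{branching Hamiltonian billiard} on $T^*\Omega$. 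In the free case \eqref{eq:trac} one additionally has to include a contribution from Rayleigh surface waves, whose symbol is determined by the boundary traction operator \eqref{eq:T}.

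With the parametrix in hand, one computes the short-time trace expansion: the $t=0$ singularity reproduces the Weyl volume term $a\,\Vol_d(\Omega)\,\Lambda^{d/2}$ of Fact~\ref{fact:1}, while the principal boundary-layer term yields a contribution of the form $b_\aleph \Vol_{d-1}(\partial\Omega)\Lambda^{(d-1)/2}$ for an explicit constant $b_\aleph$. The key analytic input for the remainder estimate is to rule out that $\operatorname{Tr}(\hat\rho(t) U(t))$ carries a singularity of order $\geq \tau^{d-1}$ at times $t \neq 0$ in the support of $\hat\rho$; by the general microlocal propagation argument of Vassiliev \cite{Vas86}, such singularities can only arise from periodic branching billiard trajectories. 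The hypothesis that the billiard is neither absolutely periodic nor dead-end guarantees that the measure of such trajectories in $S^*\Omega$ is zero, which is exactly what is needed for the stationary-phase-type bound to produce an $o(\tau^{d-1})$ remainder after the Tauberian step.

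The main obstacle, and the reason for invoking \cite{Vas86} rather than redoing the analysis here, is the precise microlocal treatment of the boundary: one must simultaneously track the mode conversion between longitudinal and transverse components, the grazing/gliding rays for each mode, and (in the free case) the Rayleigh surface mode, checking that the boundary parametrix is well-defined on the complement of a measure-zero set of dead-end directions and that reflections preserve the Lagrangian structure needed for FIO calculus. Once this machinery is in place, the result of \cite{Vas86}, specialised to the two-eigenvalue principal symbol of the elasticity operator and to the Shapiro--Lopatinski boundary conditions \eqref{eq:Dir} and \eqref{eq:trac}, directly delivers \eqref{eq:N2term}.
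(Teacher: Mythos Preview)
Your proposal is essentially correct and in the same spirit as the paper's treatment: both ultimately rest on Vassiliev's prior results, with the applicability to elasticity secured by the fact that the eigenvalues of $\mathcal{L}_{\mathrm{prin}}$ have constant multiplicities on $T^*\Omega\setminus\{0\}$ and that the boundary conditions are Shapiro--Lopatinski. The paper, however, does not sketch the wave-trace/Tauberian machinery at all --- it simply observes that Fact~\ref{fact:5} is a restatement of \cite[Theorem~6.1]{Vas84} and notes the constant-multiplicity hypothesis; your outline of the parametrix construction, branching billiard dynamics, and Tauberian step is accurate as background but goes well beyond what the paper records. One small correction: the reference for the two-term asymptotic result itself is \cite{Vas84} (Theorem~6.1) rather than \cite{Vas86}; the latter is cited in the paper for the precise formulation of the billiard conditions and for the piecewise-smooth boundary case.
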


Fact \ref{fact:5} is a re-statement of  a more general
\cite[Theorem~6.1]{Vas84} which is applicable to the elasticity operator $\mathcal{L}$ since the multiplicities of the eigenvalues of its principal symbol are constant  on $T^*\Omega$, as we have mentioned previously.

We conclude this overview with the following observation, see also \cite[Remark 4.1(i)]{Liu}.
\begin{fact}\label{fact:6} 
The coefficients  $\widetilde b_\aleph$
%and (when they exist) $\tilde{B}^\aleph$
are numerical constants which do not contain any information on the geometry of the manifold $\Omega$ or its boundary $\partial\Omega$.
Therefore, to determine these coefficients it is enough to find them in the Euclidean case. 
\end{fact}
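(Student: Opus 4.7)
The plan is to show that $\widetilde b_\aleph$ is, up to the factor $\Vol_{d-1}(\partial\Omega)$, a local boundary invariant of scaling weight zero, and then to observe that no non-constant such invariant is available. The statement then reduces to a computation in a single convenient geometry---most naturally a Euclidean half-space.

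First, I would invoke the standard parametrix construction for the heat semigroup of an elliptic boundary value problem (as in \cite{Grubb}), which gives the representation
\[
\widetilde b_\aleph\,\Vol_{d-1}(\partial\Omega)=\int_{\partial\Omega}\mathcal{I}_\aleph(x)\,\dr S_g(x),
\]
where $\mathcal{I}_\aleph(x)$ is a universal polynomial expression in a finite jet at $x\in\partial\Omega$ of the metric $g$, of the symbol of $\mathcal{L}$, and of the boundary operator. By construction, $\mathcal{I}_\aleph$ is an isometry invariant.

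Next, I would run a scaling argument. Replacing $g$ by $\kappa^2 g$ multiplies every eigenvalue $\Lambda_n^\aleph$ by $\kappa^{-2}$, so $\mathcal{Z}_{\kappa^2 g,\aleph}(t)=\mathcal{Z}_{g,\aleph}(\kappa^{-2}t)$. Matching the $t^{-(d-1)/2}$ coefficients on both sides, and using $\dr S_{\kappa^2 g}=\kappa^{d-1}\dr S_g$, forces $\mathcal{I}_\aleph$ to be invariant under $g\mapsto\kappa^2 g$, i.e.~to have weight zero. Since $\lambda$, $\mu$ and the structure of $\mathcal{T}$ are constant, the only metric-dependent ingredients that could enter $\mathcal{I}_\aleph(x)$ are covariant derivatives of the Riemann tensor of $g$ and of the second fundamental form of $\partial\Omega$, all of which carry strictly negative weight. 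Hence $\mathcal{I}_\aleph(x)$ must be a numerical constant depending only on $d$, $\lambda$, $\mu$, and the choice of boundary condition.

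The main obstacle is precisely the invariant-theoretic step: one must verify that no weight-zero scalar can be built from the local boundary data of a constant-coefficient strongly elliptic system of this type. For the leading boundary term, this reduces to a dimension count---the candidate non-constant scalars (mean curvature, scalar curvature of $\partial\Omega$, etc.) all have weight $\le -1$ and hence contribute only to later coefficients, starting at $t^{-(d-2)/2}$. With this settled, $\widetilde b_\aleph$ may be evaluated in the Euclidean half-space and the resulting value is necessarily the correct one on an arbitrary $(\Omega,g)$.
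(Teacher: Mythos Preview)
Your proposal is correct and follows essentially the same strategy as the paper: a rescaling argument combined with a weight count on the available local invariants. The paper's own justification is a one-line sketch (``stretch $\Omega$ by a linear factor $\kappa$, note the eigenvalues rescale as $\kappa^{-2}$, and check the rescaling of the geometric invariants and of \eqref{eq:Z2term}''), whereas you spell out explicitly the local boundary-integral representation from \cite{Grubb} and the reason why no non-constant weight-zero scalar can be built from curvature or the second fundamental form---but the underlying idea is identical.
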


Fact \ref{fact:6} easily follows from a rescaling argument: stretch $\Omega$ by a linear factor $\kappa>0$, note that the eigenvalues  then rescale as $\kappa^{-2}$, and check the rescaling of the geometric invariants and of  \eqref{eq:Z2term}.

Fact  \ref{fact:6} allows us to work from now on in the Euclidean setting, in which case \eqref{eq:elasticity} simplifies to 
\begin{equation}\label{eq:elasticityE}
\mathcal{L}\mathbf{u}=\left(-\mu\boldsymbol{\Delta}-(\lambda+\mu)\grad\Div\right)\mathbf{u}\,,
\end{equation}
where the vector Laplacian $\boldsymbol{\Delta}$ is a diagonal $d\times d$ operator-matrix having a scalar Laplacian $\Delta:=\sum_{k=1}^d \frac{\partial^2}{\partial x_k^2}$ in each diagonal entry.
In dimensions $d=2$ and $d=3$,  \eqref{eq:elasticityE} simplifies further to
\begin{equation*}
\mathcal{L}\mathbf{u}=\left(\mu\curl\curl-(\lambda+2\mu)\grad\Div\right)\mathbf{u}\,.
\end{equation*}
Note that we define the curl of a planar vector field by embedding $\mathbb{R}^2$ into $\mathbb{R}^3$; thus, $\curl\curl$ applied to a planar vector field is a planar vector field.

We are now in a position to state the main results of this paper.  Before doing so, let us introduce some additional notation.

Let
\begin{equation}
\label{eq:R(w)}
R_\alpha(w):=w^3-8w^2+8\left(3-2\alpha\right)w+16\left(\alpha-1\right)\,.
\end{equation}
The cubic equation $R_\alpha(w)=0$ has three roots $w_j$, $j=1,2,3$,  over $\mathbb{C}$, where $w_1$ is the distinguished real root in the interval $(0,1)$. We further define
\begin{equation}
\label{eq:gammaR}
\gamma_R:=\sqrt{w_1}\,.
\end{equation}
%We postpone until Remark~\ref{rem:Rayleigh}, after the statement of our main theorem, a discussion of the meaning of $R_\alpha$ and $\gamma_R$.
\begin{rem}
\label{rem:Rayleigh}
The subscript $R$ in $\gamma_R$ stands for ``Rayleigh". Indeed, the quantity 
\[
c_R:=\sqrt{\mu}\, \gamma_R
\]
has the physical meaning of velocity of the celebrated Rayleigh's surface wave \cite{Ra77,Ra85}. The cubic equation 
\begin{equation}
\label{eq:Rayleigh}
R_\alpha(w)=0
\end{equation}
is often referred to as \emph{Rayleigh's equation},  and it admits the equivalent formulation
\begin{equation}
\label{eq:RayleighEquiv}
\tilde R_\alpha(w):=4\sqrt{(1-\alpha w)\left(1-w\right)}-(w-2)^2=0\,.
\end{equation}
Equation \eqref{eq:Rayleigh} can be obtained from \eqref{eq:RayleighEquiv} by multiplying through by $4\sqrt{(1-\alpha w)\left(1-w\right)}+(w-2)^2$ and dropping the common factor $w$ corresponding to the spurious solution $w=0$. It is well-known \cite{RaBa,ViOg} that for all $\alpha\in (0,1)$ equation \eqref{eq:Rayleigh} --- or, equivalently, \eqref{eq:RayleighEquiv} --- admits precisely one real root $w_1=\gamma_R^2\in(0,1)$.  The nature of the other two roots $w_j$, $j=2,3$, depends on $\alpha$; we will revisit this in \S\ref{subsec:freeodd}.

Observe that $\gamma_R$ can be equivalently defined as the unique real root in $(0,1)$ of the sextic equation $R_\alpha(\gamma^2)=0$, see also \cite[\S 6.3]{SaVa}.

As we shall see,  the Rayleigh wave contributes to the second asymptotic term in the free boundary case.
\end{rem}

\begin{thm}
\label{thm:maintheorem}
Let $(\Omega, g)$ be a smooth compact connected
$d$-dimensional Riemannian manifold with boundary $\partial\Omega$. Then the second asymptotic coefficients in the two-term expansion \eqref{eq:N2term} for the eigenvalue counting function of the elasticity operator \eqref{eq:elasticity} with Dirichlet and free boundary conditions read
\begin{equation}
\begin{split}
\label{eq:maintheoremDir}
b_\Dir=-\frac{\mu^{\frac{1-d}{2}}}{2^{d+1}\pi^{\frac{d-1}{2}}\Gamma\left(\frac{d+1}{2}\right)}
\left(\vphantom{\int \limits_{\sqrt{\alpha}}^1}\right.
&\frac{4(d-1)}{\pi}\int \limits_{\sqrt{\alpha}}^1 \tau^{d-2}\arctan\left(\sqrt{(1-\alpha\tau^{-2})\left(\tau^{-2}-1\right)} \right)\dr\tau
\\ 
&+
\alpha^{\frac{d-1}2}
+
d-1
\left.\vphantom{\int \limits_{\sqrt{\alpha}}^1}\right)\,,
\end{split}
\end{equation}
and
\begin{equation}\label{eq:maintheoremfree}
\begin{split}
b_\free=
\frac{\mu^{\frac{1-d}{2}}}{2^{d+1}\pi^{\frac{d-1}{2}}\Gamma\left(\frac{d+1}{2}\right)}
\left(
\vphantom{\int\limits_{\sqrt{\alpha}}^1 \tau^{d-2}\arctan\left(\frac{\left(\tau^{-2}-2\right)^2}{4\sqrt{(1-\alpha\tau^{-2})\left(\tau^{-2}-1\right)}} \right)}
\right.
&\frac{4(d-1)}{\pi}\int\limits_{\sqrt{\alpha}}^1 \tau^{d-2}\arctan\left(\frac{\left(\tau^{-2}-2\right)^2}{4\sqrt{(1-\alpha\tau^{-2})\left(\tau^{-2}-1\right)}} \right)\dr\tau
\\ 
&
+
\alpha^{\frac{d-1}2}
+
d-5
+
4\,\gamma_R^{1-d}
\left.
\vphantom{\int\limits_{\sqrt{\alpha}}^1 \tau^{d-2}\arctan\left(\frac{\left(\tau^{-2}-2\right)^2}{4\sqrt{(1-\alpha\tau^{-2})\left(\tau^{-2}-1\right)}} \right)}
\right),
\end{split}
\end{equation}
respectively, 
where $\alpha$ is given by \eqref{eq:alpha}
and
$\gamma_R$ is given by \eqref{eq:gammaR}.
\end{thm}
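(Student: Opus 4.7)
The plan is to invoke Vassiliev's algorithm \cite[Ch.~6--7]{SaVa}, which yields an explicit formula for $b_\aleph$ whenever the principal symbol of the operator has constant eigenvalue multiplicities on $T^*\Omega\setminus\{0\}$ and the associated branching billiard is non-degenerate; Fact~\ref{fact:5} guarantees the existence of the asymptotics \eqref{eq:N2term} for generic $(\Omega,g)$, and by Fact~\ref{fact:6} it is enough to compute the universal constants $b_\aleph$ in the Euclidean half-space $\mathbb{R}^d_+=\{x_d>0\}$ with $\mathcal{L}$ given by \eqref{eq:elasticityE} and boundary conditions \eqref{eq:Dir} or \eqref{eq:trac} imposed at $x_d=0$.

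First, I would construct the plane wave solutions of $\mathcal{L}\mathbf{u}=\omega^2\mathbf{u}$. For a boundary covector $\xi'\in\mathbb{R}^{d-1}$ with $k:=|\xi'|$, the characteristic set produces a longitudinal mode with vertical wavenumber $\eta_L=\sqrt{\omega^2/(\lambda+2\mu)-k^2}$ and $d-1$ transverse modes with vertical wavenumber $\eta_T=\sqrt{\omega^2/\mu-k^2}$. Of the transverse polarisations, $d-2$ are tangent to $\partial\mathbb{R}^d_+$ and decouple completely, reflecting as independent scalar waves subject to Dirichlet or Neumann conditions inherited from \eqref{eq:Dir}--\eqref{eq:trac}; the remaining in-plane transverse mode is coupled with the longitudinal mode through the boundary operator.

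Next, I would compute the $2\times 2$ in-plane reflection matrix $\mathbf{R}_\aleph(k,\omega)$ by direct substitution into \eqref{eq:Dir} or into \eqref{eq:T}. Vassiliev's formula then expresses $b_\aleph$ as an integral over $T^*\partial\Omega$ of the total phase shift $\arg\det\mathbf{R}_\aleph$ of all reflected modes, together with contributions from the $d-2$ decoupled tangential transverse modes and from the evanescent regime $k>\omega/\sqrt{\mu}$. Setting $\tau:=\sqrt{\mu}\,k/\omega$ splits the kinematics into three regimes: both $L$ and $T$ propagating for $\tau<\sqrt{\alpha}$; total internal reflection of $L$ with propagating $T$ for $\sqrt{\alpha}<\tau<1$, where $\eta_L$ becomes purely imaginary and $\mathbf{R}_\aleph$ acquires a non-trivial unimodular phase; and fully evanescent modes for $\tau>1$. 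A direct calculation in the middle regime produces the arctangent integrands in \eqref{eq:maintheoremDir} and \eqref{eq:maintheoremfree}; the constants $\alpha^{(d-1)/2}$ and $d-1$ (Dirichlet) or $d-5$ (free) arise from the other regimes and from the $d-2$ decoupled tangential polarisations, whose scalar Dirichlet and Neumann shifts combine differently in the two boundary problems.

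The free case carries an additional surface-wave contribution. Viewed as a function of $\tau$, the numerator of $\det\mathbf{R}_\free$ is proportional to the Rayleigh expression $\tilde R_\alpha(\tau^{-2})$ of \eqref{eq:RayleighEquiv}, and hence $\mathbf{R}_\free$ extends meromorphically past $\tau=1$ with a pole precisely at $\tau=1/\gamma_R$. The residue at this pole corresponds to a genuine eigenmode of the traction problem --- the Rayleigh surface wave --- concentrated on $\partial\mathbb{R}^d_+$ and decaying exponentially into $\mathbb{R}^d_+$; this wave contributes to $\mathcal{N}_\free$ an independent $(d-1)$-dimensional Weyl term with leading power $(\omega/c_R)^{d-1}$ which, after the appropriate normalisation, produces the summand $4\,\gamma_R^{1-d}$ in \eqref{eq:maintheoremfree}. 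The main obstacle I anticipate is the explicit evaluation of the phase of $\det\mathbf{R}_\free$ in the mixed regime and its careful analytic continuation across the critical slownesses $\tau=\sqrt{\alpha}$ and $\tau=1$, together with verifying that the non-Rayleigh roots $w_2,w_3$ of the cubic $R_\alpha(w)$ --- whose nature varies with $\alpha$ --- do not produce additional spurious surface-wave contributions.
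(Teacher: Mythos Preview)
Your proposal is correct and follows essentially the same route as the paper: reduction to the Euclidean half-space via Fact~\ref{fact:6}, decomposition of the reflected field into $d-2$ decoupled tangential transverse polarisations and a coupled $2\times2$ in-plane (longitudinal/transverse) system, computation of the scattering matrix and its phase in the three kinematic regimes, and identification of the Rayleigh eigenvalue as the sole bound state of the one-dimensional free-boundary problem contributing the term $4\gamma_R^{1-d}$. The ``main obstacle'' you flag --- fixing the branch of $\arg\det\mathbf{R}_\aleph$ across the critical slownesses --- is precisely what the paper handles via the rigid/soft threshold classification (Step~5 of \S\ref{sec:algorithm}), which determines the additive constants and hence the terms $\alpha^{(d-1)/2}+d-1$ and $\alpha^{(d-1)/2}+d-5$; your worry about the non-Rayleigh roots $w_2,w_3$ is unfounded since these lie outside $(0,1)$ and therefore outside the evanescent range of the one-dimensional problem.
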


Theorem~\ref{thm:maintheorem} will be proved in \S\S\ref{sec:2termElast}--\ref{sec:secondInvSub} by implementing the algorithm described in \S\ref{sec:algorithm}.

\begin{rem}
The bound  \eqref{eq:alphabound} guarantees that
\eqref{eq:maintheoremDir} and \eqref{eq:maintheoremfree} are well-defined and real.
\end{rem}

We show the appropriately rescaled (for the ease of comparison and to remove the explicit dependence on $\mu$) coefficients $b_\Dir$ and $b_\free$  as functions of $\alpha$ in Figure \ref{fig:bs}.

\begin{figure}[htbp]
\begin{center}
\includegraphics{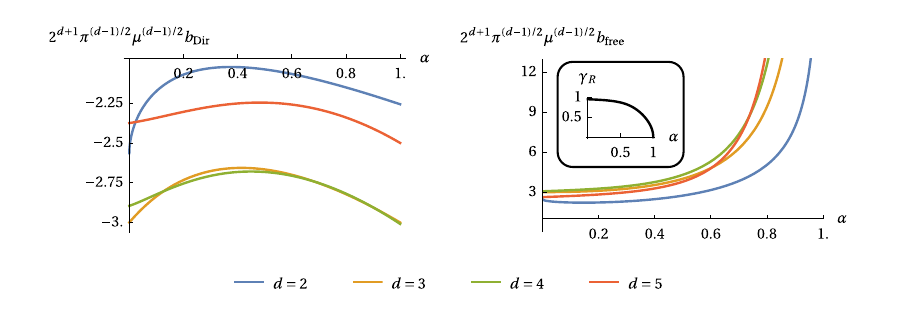}
\caption{The appropriately rescaled coefficients $b_\Dir$ (left image) and $b_\free$ (right image)  as functions of the parameter $\alpha$ for dimensions two to five. The inset in the right image shows the graph of $\gamma_R$ as a function of $\alpha$. We note that $\lim_{\alpha\to 0^+}\gamma_R\approx 0.9553$ and $\lim_{\alpha\to 1^-}\gamma_R=0$.}
\label{fig:bs}
\end{center}
\end{figure}

As it turns out, in odd dimensions the integrals in formulae \eqref{eq:maintheoremDir} and \eqref{eq:maintheoremfree} can be evaluated explicitly.

\begin{thm}
\label{thm:formulaeOddDim}
In dimension $d=2k+1$, $k=1,2,\ldots$, formulae \eqref{eq:maintheoremDir} and \eqref{eq:maintheoremfree} can be rewritten as
\begin{equation}\label{eq:maintheoremDirOdd}
b_\Dir=
-\frac{\mu^k}{2^{2k+2}k!\pi^k}
\left(
\left.
\frac{2}{k!}\frac{\dr^k}{\dr t^k}
\left(
\frac{2t-\frac{\alpha+1}{\alpha}}{t-\frac{\alpha+1}{\alpha}}\frac{1}{\sqrt{(1-\alpha t)(1-t)}}
\right)
\right|_{t=0}
-
2\left(\frac{\alpha}{\alpha+1}\right)^k
+
\alpha^k
+2k
\right),
\end{equation}
and
\begin{multline}
\label{eq:maintheoremfreeOdd}
b_\free=\frac{\mu^k}{2^{2k+2}k!\pi^k}
\left(
-\frac{8}{k!}\left.\frac{\dr^{k}}{\dr t^k}
\left(
\frac{\left(2\alpha t^2+(\alpha-3)t+2(1-\alpha)\right)\left(4\sqrt{(1-\alpha t)\left(1-t\right)}+(t-2)^2\right)}{(t-2)\left(t^3-8t^2+8(3-2\alpha)t+16(\alpha-1)\right)\sqrt{(1-\alpha t)\left(1-t\right)}} 
\right)
\right|_{t=0}
\right.
\\
\left.
\vphantom{-\frac{8}{k!}\left.\frac{\dr^{k}}{\dr t^k}
\left(
\frac{\left(2\alpha t^2+(\alpha-3)t+2(1-\alpha)\right)\left(4\sqrt{(1-\alpha t)\left(1-t\right)}+(t-2)^2\right)}{(t-2)\left(t^3-8t^2+8(3-2\alpha)t+16(\alpha-1)\right)\sqrt{(1-\alpha t)\left(1-t\right)}} 
\right)
\right|_{t=0}}
-
\alpha^k
+
2\left(k+2^{2-k}-1\right)
\right).
\end{multline}
\end{thm}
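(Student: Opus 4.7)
The plan is to evaluate the integrals in \eqref{eq:maintheoremDir} and \eqref{eq:maintheoremfree} in closed form for $d = 2k+1$ via contour integration around the branch cut $[1, 1/\alpha]$ of the function $S(t) := \sqrt{(1-\alpha t)(1-t)}$, normalised by $S(0) = +1$. To this end, one first substitutes $t = \tau^{-2}$ in both integrals, so that the range $\tau \in (\sqrt{\alpha}, 1)$ becomes $t \in (1, 1/\alpha)$ and $\tau^{2k-1}\,\dr\tau$ becomes $-\tfrac12 t^{-(k+1)}\,\dr t$. In the Dirichlet case the resulting integrand $\arctan(\sqrt{(1-\alpha t)(t-1)})$ vanishes at both endpoints, so integration by parts with $v = -t^{-k}/k$, together with the identities $1 + (1-\alpha t)(t-1) = -\alpha t(t-c)$ and $\tfrac{\dr}{\dr t}[(1-\alpha t)(t-1)] = -\alpha(2t-c)$, where $c := (\alpha+1)/\alpha$, reduces the problem to evaluating
\[
\int_1^{1/\alpha}\!\frac{(2t-c)\,\dr t}{t^{k+1}(t-c)\sqrt{(1-\alpha t)(t-1)}}\,.
\]
In the free boundary case the argument of the arctangent, $(t-2)^2/(4\sqrt{(1-\alpha t)(t-1)})$, diverges at both endpoints. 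The identity $\arctan(f) = \tfrac{\pi}{2} - \arctan(1/f)$ first extracts an explicit boundary contribution proportional to $1-\alpha^k$; subsequent integration by parts, combined with the identity $(t-2)^4 - 16(1-\alpha t)(1-t) = tR_\alpha(t)$ (equivalent to the Rayleigh factorisation $\tilde R_\alpha(t)(4S(t)+(t-2)^2) = -tR_\alpha(t)$ of Remark \ref{rem:Rayleigh}), yields an integrand of the same structural form as above but with $(t-c)$ replaced by $R_\alpha(t)$ and the numerator $(2t-c)$ replaced by $(t-2)[2\alpha t^2+(\alpha-3)t+2(1-\alpha)]$.

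Each of these $t$-integrals equals $-\tfrac{1}{2\ir}$ times the discontinuity across the cut of the meromorphic function obtained by replacing $\sqrt{(1-\alpha t)(t-1)}$ with the upper boundary value of $S(t)$. Since both integrands decay like $|t|^{-(k+2)}$ at infinity, deforming a small loop around the cut out to a circle at infinity identifies the integral with $-\pi$ times the sum of residues of the meromorphic extension at all poles in $\mathbb{C}\setminus[1,1/\alpha]$. In the Dirichlet case the only poles are $t=0$ (of order $k+1$) and $t=c$ (simple); the residue at $t=0$ is by definition $\tfrac{1}{k!}\tfrac{\dr^k}{\dr t^k}[\,\cdot\,]_{t=0}$ and produces the derivative term in \eqref{eq:maintheoremDirOdd}, whereas the residue at $t = c$, once the branch of $S$ is continued along a path in $\mathbb{C}\setminus[1,1/\alpha]$ from $t = 0$ to $t = c$ to give $S(c) = -1$, yields an explicit multiple of $(\alpha/(\alpha+1))^k$. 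In the free boundary case the additional poles are the three roots $w_1 = \gamma_R^2$, $w_2$, $w_3$ of $R_\alpha$: at $w_1$ the defining Rayleigh identity $4S(\gamma_R^2) = (\gamma_R^2-2)^2$ collapses the residue to a constant multiple of $\gamma_R^{-2k} = \gamma_R^{1-d}$, giving the $4\gamma_R^{1-d}$ term of \eqref{eq:maintheoremfreeOdd}, while the residues at $w_2$ and $w_3$ recombine with the residue at $t=0$ via the rewriting $(4S(t)+(t-2)^2)/(-t) = R_\alpha(t)/\tilde R_\alpha(t)$ to yield the single compact derivative term displayed in \eqref{eq:maintheoremfreeOdd}.

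The main obstacle is the algebraic bookkeeping in the free boundary case: the rational function arising naturally from the integration by parts is
\[
\frac{(t-2)[2\alpha t^2+(\alpha-3)t+2(1-\alpha)]}{R_\alpha(t)\, S(t)}\,,
\]
whereas the expression inside the $k$-fold derivative in \eqref{eq:maintheoremfreeOdd} has the Rayleigh factor $4S(t)+(t-2)^2$ in its numerator and $(t-2)$ in its denominator. Establishing that the two produce the same Taylor coefficient of order $k$ at $t = 0$ — after absorbing the residues at $w_1, w_2, w_3$ and verifying that the spurious pole at $t = 2$ introduced by the rewriting has no net contribution — requires a careful analysis of the branches of $S$ at the (possibly complex) roots $w_2$, $w_3$ and at $c$, and this is the most delicate step of the proof.
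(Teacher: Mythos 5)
Your overall strategy — substituting $t=\tau^{-2}$, integrating by parts to eliminate the $\arctan$, and then evaluating the resulting integral via a dog-bone contour around $[1,1/\alpha]$ followed by the residue theorem — is precisely the approach taken in the paper, and your treatment of the Dirichlet case (poles at $t=0$ of order $k+1$ and at $t=c=(\alpha+1)/\alpha$, with the residue at infinity vanishing because of the $O(|t|^{-(k+2)})$ decay) is essentially correct.

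In the free boundary case there is a genuine gap. You describe the pole at $t=2$ as ``spurious'' and predict that it ``has no net contribution''; this is wrong --- that pole is essential, and it is precisely the source of the $2^{2-k}$ term in \eqref{eq:maintheoremfreeOdd}. The paper handles the residues at $w_1,w_2,w_3$ by splitting them into $2\operatorname{Res}(\tilde f_k,w_1)$ plus $\operatorname{Res}(\tilde f_k,w_2)+\operatorname{Res}(\tilde f_k,w_3)-\operatorname{Res}(\tilde f_k,w_1)$. Because $\sqrt{(1-\alpha w)(w-1)}\big|_{w=w_1}=\tfrac{\ir}{4}(w_1-2)^2$ while $\sqrt{(1-\alpha w)(w-1)}\big|_{w=w_j}=-\tfrac{\ir}{4}(w_j-2)^2$ for $j=2,3$, the second group equals $\sum_{j=1}^3\operatorname{Res}(\tilde h_k,w_j)$ for the purely rational function
\begin{equation*}
\tilde h_k(w)=\frac{4\ir}{w^{k+1}}\,\frac{2\alpha w^2+(\alpha-3)w+2(1-\alpha)}{(w-2)\,R_\alpha(w)}\,,
\end{equation*}
which by the residue theorem equals $-\operatorname{Res}(\tilde h_k,0)-\operatorname{Res}(\tilde h_k,2)$, and a direct computation gives $\operatorname{Res}(\tilde h_k,2)=-\ir/2^k\ne 0$. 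Separately, $2\operatorname{Res}(\tilde f_k,w_1)$ collapses to $-\tfrac{\ir}{2}\gamma_R^{-2k}$ via the identity $w_1(w_1-w_2)(w_1-w_3)(w_1-2)=16\bigl(2\alpha w_1^2+(\alpha-3)w_1+2(1-\alpha)\bigr)$, which follows from the Rayleigh equation and Vieta's formulae. These two pieces --- the auxiliary rational function $\tilde h_k$ and the Vieta identity --- are the missing ingredients in your bookkeeping, and without the nonzero $t=2$ residue your final formula would be off by $2^{3-k}$.
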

The proof of Theorem~\ref{thm:formulaeOddDim} is given in Appendix~\ref{appendix:proof}.  

We list, in Tables \ref{table:1} and \ref{table:2}, the explicit expressions for $b_\aleph$, $\aleph\in\{\Dir,\free\}$, for the first few odd dimensions.

\begin{table}[htpb]
\centering
\begin{tabular}{@{}>{$}c<{$}>{$}c<{$}@{}}
\toprule
d & b_{\Dir} 
  \\\midrule
3&-\frac{\left(2 \alpha ^2+\alpha +3\right) \mu }{16 \pi  (\alpha +1)} \\\addlinespace[1ex]
5&-\frac{\left(7 \alpha ^4+12 \alpha ^3+6 \alpha ^2+36 \alpha +19\right) \mu ^2}{512 \pi
   ^2 (\alpha +1)^2} \\\addlinespace[1ex]
7&-\frac{\left(13 \alpha ^6+36 \alpha ^5+27 \alpha ^4+16 \alpha ^3+147 \alpha ^2+156
   \alpha +53\right) \mu ^3}{12288 \pi ^3 (\alpha +1)^3} \\\addlinespace[1ex]
9& -\frac{\left(99 \alpha ^8+376 \alpha ^7+500 \alpha ^6+200 \alpha ^5+146 \alpha ^4+1992
   \alpha ^3+3188 \alpha ^2+2168 \alpha +547\right) \mu ^4}{1572864 \pi ^4 (\alpha +1)^4}
   \\\addlinespace[1ex]
\bottomrule
\addlinespace[1ex]
\end{tabular}
\caption{The coefficient $b_\Dir$ for odd dimensions.}\label{table:1}  
\end{table}

\begin{table}[htpb]
\centering
\begin{tabular}{@{}>{$}c<{$}>{$}c<{$}@{}}
\toprule
d & b_{\free} 
  \\\midrule
3& \frac{\left(2 \alpha ^2-3 \alpha +3\right) \mu }{16 \pi  (1-\alpha)} \\\addlinespace[1ex]
5&  \frac{\left(-7 \alpha ^4+12 \alpha ^3+14 \alpha ^2-36 \alpha +21\right) \mu ^2}{512 \pi
   ^2 (1-\alpha)^2} \\\addlinespace[1ex]
7&  \frac{\left(13 \alpha ^6-36 \alpha ^5+30 \alpha ^4-56 \alpha ^3+159 \alpha ^2-168
   \alpha +62\right) \mu ^3}{12288 \pi ^3 (1-\alpha)^3} \\\addlinespace[1ex]
9& \frac{\left(-99 \alpha ^8+376 \alpha ^7-516 \alpha ^6+296 \alpha ^5+470 \alpha ^4-2200
   \alpha ^3+3468 \alpha ^2-2440 \alpha+661\right) \mu ^4}{1572864 \pi ^4 (1-\alpha)^4} \\\addlinespace[1ex]
\bottomrule
\addlinespace[1ex]
\end{tabular}
\caption{The coefficient $b_\free$ for odd dimensions.}\label{table:2}  
\end{table}

\begin{rem}\label{rem:various}\ 
\begin{enumerate}[(i)]
\item 
Integrals in formulae \eqref{eq:maintheoremDir} and \eqref{eq:maintheoremfree} can be evaluated explicitly in even dimensions as well, but in this case one ends up with complicated expressions involving elliptic integrals.  Given that the outcome would not be much simpler or more elegant than the original formulae \eqref{eq:maintheoremDir} and \eqref{eq:maintheoremfree}, we omit the explicit evaluation of the integrals in even dimensions.
\item
In dimensions $d=2$ and $d=3$, formulae for the second Weyl coefficient for the operator of linear elasticity both for Dirichlet and free boundary conditions are given in \cite[Section~6.3]{SaVa}. The formulae in \cite{SaVa} have been obtained by applying the algorithm described below in \S\ref{sec:algorithm}, but the level of detail therein is somewhat insufficient, with only the final expressions being provided, without any intermediate steps.  Our results, when specialised to $d=2$ and $d=3$, agree with those of \cite{SaVa} and allow one to recover these results whilst providing the detailed derivation missing in \cite{SaVa}.
\end{enumerate}
\end{rem}

\begin{rem}\label{rem:Liu}
Genquian Liu  \cite{Liu} claims to have obtained formulae for $\widetilde b_\Dir$ and $\widetilde b_\free$.  However, the strategy adopted in \cite{Liu} is fundamentally flawed, because the ``method of images'' does not work for the operator of linear elasticity. Consequently, the main results from \cite{Liu} are wrong.

We postpone a more detailed discussions of \cite{Liu}, including the limitations of the method of images and a brief historical account of the development of the subject,  until Appendix~\ref{appendix:Liu}. Below, we provide a preliminary ``experimental'' comparison of our results and those in \cite{Liu}.

Essentially, \cite{Liu} aims to deduce the expression for the second asymptotic heat trace expansion coefficient $\widetilde b_\Dir$ in the Dirichlet case, as well as a corresponding expression in the case of the boundary conditions \cite[formula (1.5)]{Liu} (called there the ``Neumann''\footnote{The quotation marks are ours.} conditions) which in our notation\footnote{Note that our notation often differs from that of \cite{Liu}.} read
\begin{equation}\label{eq:Neu} 
n^\beta \nabla_\beta u^\alpha=0.
\end{equation}
We observe that the boundary conditions \eqref{eq:Neu} are \emph{not} self-adjoint for \eqref{eq:Lspec},  as easily seen by simple integration by parts.
Therefore, it is hard to assign a meaning to Liu's result in this case \cite[Theorem 1.1, the lower sign version of formula (1.10)]{Liu}. Nevertheless, even if one interprets the ``Neumann'' conditions \eqref{eq:Neu}  as our free boundary conditions \eqref{eq:trac}, as the author suggests in a post-publication revision \cite[formula (1.3)]{LiuUpdate}, the result of  \cite{Liu} in the free boundary case remains wrong.

\

For the sake of clarity, let us compare the results in the case of Dirichlet boundary conditions only.
The main result of \cite{Liu} in the Dirichlet case is \cite[Theorem 1.1, the upper sign version of formula (1.10)]{Liu}, which correctly states the coefficient $\widetilde a$ (cf. our formulae \eqref{eq:tildeC} and \eqref{eq:C}), and also states, in our notation, that
\begin{equation}\label{eq:BLiu}
\widetilde b^\Liu_\Dir:=-\frac{1}{4 (4\pi)^{(d-1)/2}}\left(\frac{d-1}{\mu^{(d-1)/2}}+\frac{1}{(\lambda+2\mu)^{(d-1)/2}}\right).
\end{equation}
This also implies, by \eqref{eq:BvstildeB},
\begin{equation}\label{eq:tildeBLiu}
b^\Liu_\Dir=\frac{\tilde b^\Liu_\Dir}{\Gamma\left(1+\frac{d-1}{2}\right)}=
-\frac{\mu^{\frac{1-d}{2}}}{2^{d+1}\pi^{\frac{d-1}{2}}\Gamma\left(\frac{d+1}{2}\right)}\left(\alpha^{(d-1)/2}+d-1\right),
\end{equation}
which differs from our expression \eqref{eq:maintheoremDir} by a missing integral term. 

For the reasons explained in Appendix~\ref{appendix:Liu}, formula \eqref{eq:BLiu} is incorrect.  We illustrate this by first showing, in Figure \ref{fig:bLiutob}, the 
ratio of the coefficient $b^\Liu_\Dir$ and our coefficient $b_\Dir$. 

\begin{figure}[htbp]
\begin{center}
\includegraphics{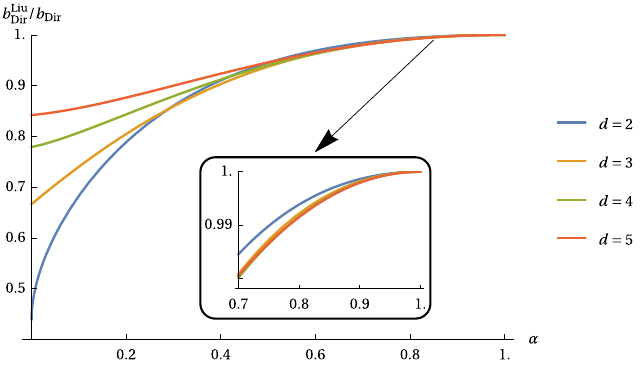}
\caption{The ratio $b^\Liu_\Dir/b_\Dir$ for different dimensions, shown as functions of $\alpha$.}
\label{fig:bLiutob}
\end{center}
\end{figure}

This ratio depends only on the dimension $d$ and the parameter $\alpha$. For each $d$, the ratio is monotone increasing in $\alpha$ (and is therefore monotone decreasing in $\lambda/\mu$). As $\alpha\to 1^-$ (or $\lambda\to -\mu^+$),  $b^\Liu_\Dir/b_\Dir\to 1^-$ in any dimension, see inset to Figure \ref{fig:bLiutob}. Thus, for the smallest possible values of the Lam\'e coefficient $\lambda$, Liu's asymptotic formula would produce an almost correct result, however  the error  would become more and more noticeable as $\lambda/\mu$ gets large. 

We illustrate this phenomenon ``experimentally'' in Figure \ref{fig:sq} where we take $\Omega\subset\mathbb{R}^2$ to be the unit square. Neither the Dirichlet nor the free boundary problem in this case can be solved by separation of variables, so we find the eigenvalues using the finite element package \texttt{FreeFEM} \cite{FreeFem}. As $\Vol_2(\Omega)=1$ and  $\Vol_1(\partial{\Omega})=4$, \eqref{eq:N2term} in the Dirichlet case may be interpreted as
\[
\mathcal{N}_\Dir(\Lambda)-a\Lambda\approx  4b_\Dir\sqrt{\Lambda}
\]
for sufficiently large $\Lambda$, and we compare the numerically computed left-hand sides with the right-hand sides given by our expression \eqref{eq:maintheoremDir} and  Liu's expression \eqref{eq:tildeBLiu}. As we have predicted, for $\lambda=-1/2$ both asymptotic formulae give a good agreement with the numerics, however for larger values of $\lambda$ our formulae match the actual eigenvalue counting functions exceptionally well, whereas Liu's ones are obviously incorrect. 

\begin{figure}[htbp]
\begin{center}
\includegraphics[width=\textwidth]{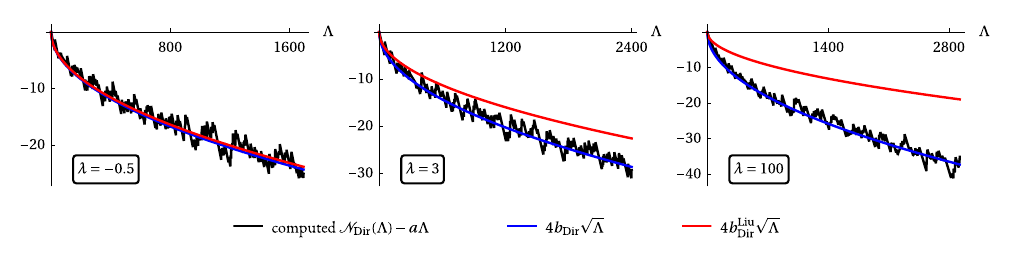}
\caption{The Dirichlet problem for the unit square. The second Weyl terms, both Liu's and ours, are compared to the actual numerically computed counting functions. In all three images we take $\mu=1$.}
\label{fig:sq}
\end{center}
\end{figure}

Of course, the boundary of a square is not smooth, only piecewise smooth, but this does not cause problems because this case is covered by \cite[Theorem 1]{Vas86}. Furthermore, \cite[Theorem 2]{Vas86} guarantees that sufficient conditions ensuring the validity of two-term asymptotic expansions \eqref{eq:N2term} are satisfied.
\end{rem}

For an additional illustration of the validity of our asymptotics in the free boundary case, see Figure \ref{fig:sqfree}. For further examples, both in the Dirichlet and the free boundary case, see Appendices \ref{sec:ex2d} and \ref{sec:ex3d}.

\begin{figure}[htbp]
\begin{center}
\includegraphics[width=\textwidth]{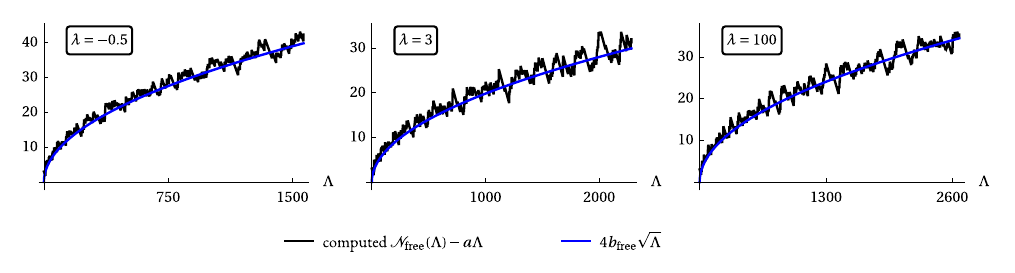}
\caption{The free boundary problem for the unit square. The second Weyl terms are compared to the actual numerically computed counting functions. In all three images we take $\mu=1$.}
\label{fig:sqfree}
\end{center}
\end{figure}

\section{Second Weyl coefficient for systems: an algorithm}
\label{sec:algorithm}

In this section we provide an algorithm for the determination of the second Weyl coefficient for more general elliptic systems. The algorithm given below is not new and appeared in \cite{Vas84,Vas86} as well as in \cite{SaVa} for scalar operators, with \cite[\S 6]{Vas84} briefly outlining the changes needed to adapt the results to systems.  However,  \cite{Vas84,Vas86} are not widely known and their English translations are somewhat unclear; therefore, we reproduce the algorithm here in a self-contained fashion and for matrix operators, for the reader's convenience.  In the next section we will explicitly implement the algorithm for $\mathcal{L}_\Dir$ and $\mathcal{L}_\free$.

Let $\mathcal{A}$ be a formally self-adjoint elliptic 
$m\times m$ differential operator of even order $2s$, semibounded from below.  Consider the spectral problem
\begin{equation}
\label{eq:SpecProbA}
\mathcal{A}\mathbf{u}=\Lambda\mathbf{u},
\end{equation}
\begin{equation}
\label{eq:SpecProbAbc}
\left.\mathcal{B}_j\mathbf{u}\right|_{\partial\Omega}=0, \qquad j=1,\dots, ms,
\end{equation}
where the $\mathcal{B}_j$'s are differential operators implementing self-adjoint boundary conditions of the Shapiro--Lopatinski type.  

It is well-known that the spectrum of \eqref{eq:SpecProbA}, \eqref{eq:SpecProbAbc} is discrete. Let us denote by
\[
\Lambda_n^{\mathcal{A}, \mathcal{B}}, \qquad n\in \mathbb{N},
\]
the eigenvalues of \eqref{eq:SpecProbA}, \eqref{eq:SpecProbAbc}, with account of multiplicity, and let
\begin{equation}
\label{eq:countingfnAB}
\mathcal{N}^{\mathcal{A}, \mathcal{B}}(\Lambda):=\#\left\{n: \ \Lambda_n^{\mathcal{A}, \mathcal{B}}<\Lambda\right\}
\end{equation}
be the corresponding eigenvalue counting function.

\

%In view of Fact~\ref{fact:6}, we work in Euclidean space $\mathbb{R}^d$.
In a neighbourhood of the boundary $\partial \Omega$ we introduce local coordinates
\begin{equation}
\label{eq:coordPos}
x=(x',z),
%\in \mathbb{R}^d,
\qquad \partial\Omega=\{z=0\},\quad z=\operatorname{dist}(x,\partial\Omega),
\end{equation}
so that $z>0$ for $x\in \Omega^\circ$, where $\Omega^\circ$ is the interior of $\Omega$.
We will also adopt the notation
\begin{equation}
\label{eq:coordMom}
\xi=(\xi',\zeta).
%\in T^*_x\mathbb{R}^d.
\end{equation}

Let $\mathcal{A}_\mathrm{prin}(x,\xi)$ be the principal symbol of $\mathcal{A}$ and suppose that $\xi\ne0$. Let $\tilde h_1(x,\xi)$, \ldots, $\tilde h_{\tilde m}(x,\xi)$ be the distinct eigenvalues of $\mathcal{A}_\mathrm{prin}(x,\xi)$ enumerated in increasing order.  Here $\tilde{m}=\tilde{m}(x,\xi)$ is a positive integer smaller than or equal to $m$.

\begin{assumption}
\label{ass:ConstMul}
The eigenvalues $\tilde h_k(x,\xi)$, $k=1,\ldots, \tilde m$, have constant multiplicities. In particular, the quantity $\tilde m$ is constant, independent of $(x,\xi)$. 
\end{assumption}

We will see in \S\ref{sec:2termElast} that the above assumption is satisfied for the operator of linear elasticity $\mathcal{L}$.

\begin{thm}[{\cite[Theorem~6.1]{Vas84}}]
Suppose that $(\Omega, g)$ is such that the corresponding billiards is neither \emph{dead-end} nor \emph{absolutely periodic}. Then the eigenvalue counting function \eqref{eq:countingfnAB} admits a two-term asymptotic expansion
\begin{equation*}
\mathcal{N}^{\mathcal{A},\mathcal{B}}(\Lambda)= A\,\Lambda^{\frac{d}{2s}}+ B_\mathcal{B}\Lambda^{\frac{d-1}{2s}}+o\left(\Lambda^{\frac{d-1}{2s}}\right)\quad\text{as}\quad\Lambda\to+\infty
\end{equation*}
for some real constants $A$ and $B_\mathcal{B}$. Furthermore:
\begin{enumerate}
\item[(a)]
The first Weyl coefficient $A$ is given by
\begin{equation*}
\label{eq:FirstWeylGeneral}
A=\frac{1}{(2\pi)^d}\int_{T^*\Omega}  n(x,\xi,1) \ \dr x\,\dr\xi,
\end{equation*}
where $n(x,\xi,\Lambda)$ is the eigenvalue counting function for the matrix-function $\mathcal{A}_\mathrm{prin}(x,\xi)$\footnote{That is, for each $(x,\xi)\in T^*\Omega$, the quantity $n(x,\xi,\Lambda)$ is the number of eigenvalues less than  $\Lambda$, with account of multiplicity, of the $m\times m$ matrix $\mathcal{A}_\mathrm{prin}(x,\xi)$.}.

\item[(b)]
The second Weyl coefficient $B_\mathcal{B}$ is given by
\begin{equation}
\label{eq:SecondWeylGeneral}
B_\mathcal{B}=\frac{1}{(2\pi)^{d-1}} \int_{T^*\partial\Omega}
\operatorname{shift}(x',\xi',1) \,\dr x' \,\dr\xi'\,,
\end{equation}
where the \emph{spectral shift function} is defined in accordance with
\begin{equation*}
\operatorname{shift}(x',\xi',\Lambda):=\frac{\varphi(x',\xi',\Lambda)}{2\pi}+N(x',\xi',\Lambda)\,,
\end{equation*}
and the \emph{phase shift} $\varphi(x',\xi',\Lambda)$ and the \emph{one-dimensional counting function} $N(x',\xi',\Lambda)$ are determined via the algorithm given below.
\end{enumerate}
\end{thm}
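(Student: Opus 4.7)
The plan is to obtain both Weyl coefficients from a microlocal analysis of the spectral function, via a wave-trace / Tauberian argument adapted from the Levitan--Avakumovi\'c--H\"ormander scheme for scalar operators to systems with branching rays. For the interior term, Assumption~\ref{ass:ConstMul} allows one to diagonalise $\mathcal{A}_\mathrm{prin}(x,\xi)$ microlocally into scalar blocks with homogeneous symbols $\tilde h_k(x,\xi)$, each of constant multiplicity. Applying the standard interior parametrix construction for $\mathrm{e}^{-\ir t \mathcal{A}^{1/(2s)}}$ inside $\Omega$ and summing the resulting scalar Weyl laws with account of multiplicities yields the pointwise Weyl formula
\begin{equation*}
A=\frac{1}{(2\pi)^d}\int_{T^*\Omega} n(x,\xi,1)\,\dr x\,\dr\xi,
\end{equation*}
up to a remainder of order $\Lambda^{(d-1)/(2s)}$. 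No boundary information enters here.

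For the second coefficient I would work in a collar neighbourhood of $\partial\Omega$ using the coordinates \eqref{eq:coordPos}--\eqref{eq:coordMom}, freeze the coefficients of $\mathcal{A}$ and of each $\mathcal{B}_j$ at a boundary point $(x_0',0)$, and Fourier-transform in the tangential variable $x'$. The resulting \emph{normal operator} is a matrix ODE on the half-line $\{z\ge 0\}$ depending on the tangential parameters $(x_0',\xi')$ and on $\Lambda$, equipped with the Shapiro--Lopatinski boundary conditions at $z=0$ and with limiting-absorption/decay conditions as $z\to+\infty$. Its essential spectrum is determined by the characteristic roots $\zeta$ of $\det\bigl(\mathcal{A}_\mathrm{prin}(x_0',0,\xi',\zeta)-\Lambda I\bigr)=0$; scattering theory for this half-line problem produces a unitary reflection matrix $\mathcal{R}(x_0',\xi',\Lambda)$ on the open-channel subspace and, possibly, finitely many surface-wave eigenfunctions decaying as $z\to+\infty$. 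The phase shift is then $\varphi:=-\arg\det\mathcal{R}$, while $N(x_0',\xi',\Lambda)$ counts the surface eigenmodes below $\Lambda$; together they form the Krein-type spectral shift of the half-line problem relative to a full-line reference.

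Next I would assemble a boundary-layer parametrix from the normal-operator data: at each tangential point $(x',\xi')\in T^*\partial\Omega$, each incoming bicharacteristic of speed $\tilde h_k$ reflects into a bouquet of outgoing rays, one for every branch $\tilde h_\ell$ whose sheet meets the fixed tangential momentum, weighted by the corresponding entry of $\mathcal{R}$. Plugging this parametrix into the trace of a smoothed spectral projector and performing the $z$-integration together with stationary phase in the tangential variables produces, as $\Lambda\to+\infty$, a boundary contribution proportional to $\int_{T^*\partial\Omega}\operatorname{shift}(x',\xi',\Lambda)\,\dr x'\,\dr\xi'$. The homogeneity $\operatorname{shift}(x',\xi',\Lambda)=\operatorname{shift}\bigl(x',\Lambda^{-1/(2s)}\xi',1\bigr)$ rescales this to $\Lambda^{(d-1)/(2s)}$ times the integral evaluated at $\Lambda=1$, delivering \eqref{eq:SecondWeylGeneral}. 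Finally a Safarov--Vassiliev Tauberian theorem, together with the non-dead-end/non-absolutely-periodic hypothesis on the branching billiards, upgrades the smoothed pointwise asymptotics into the asymptotics of $\mathcal{N}^{\mathcal{A},\mathcal{B}}(\Lambda)$ with remainder $o\bigl(\Lambda^{(d-1)/(2s)}\bigr)$.

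The principal obstacle is the construction of the branching boundary parametrix and the verification that the associated singular contributions are negligible. Unlike the scalar case, each reflection mixes modes of different speeds, so the symbolic calculus must track the mode-conversion matrix $\mathcal{R}$ together with its stationary-phase loci, and one must prove that the set of broken bicharacteristics that are periodic, grazing, or dead-end has measure zero in $T^*\partial\Omega$ under the hypothesis of Fact~\ref{fact:5}. Once that microlocal machinery is in place, the identification of the boundary density as $\varphi/(2\pi)+N$ reduces to a Birman--Krein identity for the one-dimensional normal operator, and matching of the overall constants follows from the leading coefficient in the stationary-phase expansion of the boundary trace.
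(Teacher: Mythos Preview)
The paper does not prove this theorem at all: it is quoted as \cite[Theorem~6.1]{Vas84} and treated as a known result. The entire \S\ref{sec:algorithm} is devoted to \emph{stating} the algorithm for computing $\operatorname{shift}(x',\xi',\Lambda)$, not to justifying why the two-term expansion holds or why the second coefficient is given by \eqref{eq:SecondWeylGeneral}. The authors explicitly say that the algorithm ``is not new and appeared in \cite{Vas84,Vas86} as well as in \cite{SaVa}'', and they reproduce it only ``for the reader's convenience''. So there is no proof in the paper for you to compare against.

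Your sketch is a plausible high-level outline of the kind of argument that underlies \cite{Vas84} and \cite{SaVa}: microlocal diagonalisation for the interior term, a frozen-coefficient half-line problem near the boundary, scattering matrix and Birman--Krein identity to identify the boundary density as $\varphi/(2\pi)+N$, and a Tauberian step using the billiards hypothesis to sharpen the remainder. That is broadly correct in spirit, but it remains a sketch rather than a proof: the genuinely hard parts --- constructing the branching boundary parametrix with mode conversion, controlling grazing and gliding rays, and proving the measure-zero condition on periodic/dead-end broken bicharacteristics --- are exactly the ones you flag as obstacles, and they are not resolved in your proposal. A minor point: you write $\varphi=-\arg\det\mathcal{R}$, whereas the paper's convention in \eqref{eq:SpectralShiftFn} is $\varphi=\arg\det S+\mathfrak{s}$ with an additive normalising constant fixed by the threshold jump conditions \eqref{eq:jumpsSSF_gen}; the sign and the normalisation matter for the final formulae.
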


{\bf Step 1}: {\em One-dimensional spectral problem}. Construct the ordinary differential operators $\mathcal{A}'$ and $\mathcal{B}'_j$ from the partial differential operators $\mathcal{A}$ and $\mathcal{B}_j$ as follows:
\begin{enumerate}[(i)]
\item
retain only the terms containing the derivatives of the highest order in $\mathcal{A}$ and $\mathcal{B}_j$;

\item
replace partial derivatives along the boundary with $\ir$ times the corresponding component of momentum:
\[
\partial_{x'}\mapsto \ir\xi'\,;
\]

\item
evaluate all coefficients at $z=0$.
\end{enumerate}
The operators $\mathcal{A}'=\mathcal{A}'(x',\xi')$ and $\mathcal{B}'_j=\mathcal{B}'_j(x',\xi')$ are ordinary differential operators in the variable $z$ with coefficients depending on $x'$ and $\xi'$.

Consider the one-dimensional spectral problem
\begin{equation}
\label{eq:1dimProblA} 
\mathcal{A}'\mathbf{u}(z)=\Lambda\mathbf{u}(z),
\end{equation}
\begin{equation}
\label{eq:1dimProblBC} 
\left. \mathcal{B}'_j \mathbf{u}(z)\right|_{z=0}=0, \qquad j=1,\ldots,sm.
\end{equation}

\

{\bf Step 2}: {\em Thresholds and continuous spectrum}.
Suppose that $\xi'\ne0$. Let $h_k(\zeta)$, $k=1, \dots, \tilde m$, be the distinct eigenvalues of $(\mathcal{A}')_\mathrm{prin}(\zeta)$ enumerated in increasing order and let $m_k$ be their multiplicities,
so that
\[
\sum_{k=1}^{\tilde{m}} m_k=m.
\]
Clearly, for fixed $(x',\xi')$ we have
\[
h_k(\zeta)=\tilde h_k((x',0),(\xi',\zeta)), \qquad k=1, \dots, \tilde m\,.
\]
In what follows, up to and including Step 6, we suppress, for the sake of brevity, the dependence on $x'$ and $\xi'$.

Compute the \emph{thresholds} of the continuous spectrum, namely, nonnegative real numbers $\Lambda_*$ such that the equation
\begin{equation*}
h_k(\zeta)=\Lambda_*
\end{equation*}
in the variable $\zeta$ has a multiple real root for at least one $k\in\{1,\ldots, \tilde{m}\}$.  We enumerate the $\overline{m}$ thresholds
in increasing order
\[
\Lambda_*^{(1)}<\dots<\Lambda_*^{(\overline{m})}.
\]

The thresholds partition the continuous spectrum $[\Lambda_*^{(1)},+\infty)$ of the problem \eqref{eq:1dimProblA}, \eqref{eq:1dimProblBC} into $\overline{m}$ intervals 
\begin{equation*}
I^{(l)}:=
\begin{cases}
\left(\Lambda_*^{(l)}, \Lambda_*^{(l+1)}\right) & \text{for}\ l=1,\ldots,\overline{m}-1,\\[1ex]
\left(\Lambda_*^{(\overline{m})},  +\infty\right) & \text{for}\ l=\overline{m}.
\end{cases}
\end{equation*}
For $\Lambda\in I^{(l)}$, let $k_\mathrm{max}^{(l)}$ be the largest $k$ for which the equation
\begin{equation}
\label{eq:zetaRoots}
h_k(\zeta)=\Lambda
\end{equation}
has real roots. Given a $k\in\{1,\ldots,k_\mathrm{max}^{(l)}\}$, let $2q_k^{(l)}$ be the number of real roots\footnote{The number of such roots is independent of the choice of a particular $\Lambda\in I^{(l)}$.} of equation \eqref{eq:zetaRoots}.
We define the multiplicity of the continuous spectrum in $I^{(l)}$ as
\[
p^{(l)}:=\sum_{k=1}^{k_\mathrm{max}^{(l)}}m_k\,q_k^{(l)}\,.
\]

\

{\bf Step 3}: {\em Eigenfunctions of the continuous spectrum}.
At this step we suppress, for the sake of brevity, the dependence on $l$ and write $k_\mathrm{max}=k_\mathrm{max}^{(l)}$, $q_k=q_k^{(l)}$, $p=p^{(l)}$.
In each interval $I^{(l)}$ denote the real roots of \eqref{eq:zetaRoots} for a given $\,k=1,\ldots,k_\mathrm{max}\,$ by
\begin{equation*}
\zeta_{k,q}^{\pm}(\Lambda), \qquad q=1,\ldots,q_k\,,
\end{equation*}
where the superscript $\pm$ is chosen in such a way that
\[
\operatorname{sign}\left(\left.\frac{\dr h_k(\zeta)}{\dr\zeta}\right|_{\zeta=\zeta^\pm_{k,q}(\Lambda)}\right)=\pm 1,
\]
and the roots are ordered in accordance with
\[
\zeta_{k,1}^{-}(\Lambda)<\zeta_{k,1}^{+}(\Lambda)<\ldots<\zeta_{k,q_k}^{-}(\Lambda)<\zeta_{k,q_k}^{+}(\Lambda)\,.
\]

Let
\begin{equation*}
\mathbf{v}^{(j)}_{k}(\zeta), \qquad k=1,\ldots,\tilde{m},\quad j=1,\ldots m_k,
\end{equation*}
be orthonormal eigenvectors of $(\mathcal{A}')_\mathrm{prin}(\zeta)$ corresponding to the eigenvalues $h_k(\zeta)$.  Of course, these eigenvectors are not uniquely defined: there is a $\mathrm{U}(m_k)$ gauge freedom in their choice.

For given $k\in\{1,\ldots,k_\mathrm{max}^{(l)}\}$, $q\in\{1,\ldots,q_k\}$ and $@\in\{+,-\}$, \,let
\[
\mathbf{w}^@_{k,q,j}(\Lambda)
:=
\mathbf{v}^{(j)}_{k}(\zeta^@_{k,q}(\Lambda)),
\]
where the gauge is chosen so that
\[
\sum_{j=1}^{m_k}
\left \langle \mathbf{w}^@_{k,q,j}(\Lambda), \frac{\dr \mathbf{w}^@_{k,q,j}(\Lambda)}{\dr \Lambda}\right \rangle=0. 
\]
This defines each of the two orthonormal bases
$\mathbf{w}^+_{k,q,j}(\Lambda)\,$, $\,j=1,\ldots m_k\,$,
and
$\mathbf{w}^-_{k,q,j}(\Lambda)\,$, $\,j=1,\ldots m_k\,$,
uniquely modulo a composition of a rigid ($\Lambda$-independent) $\ \mathrm{U}(m_k)\ $ transformation and a $\Lambda$-dependent $\ \mathrm{SU}(m_k)\ $ transformation.

We seek \emph{eigenfunctions of the continuous spectrum} (generalised eigenfunctions) for the one-dimen\-sional spectral problem \eqref{eq:1dimProblA},  \eqref{eq:1dimProblBC} corresponding to $\Lambda\in I^{(l)}$ in the form
\begin{equation}\label{eq:efcontspec_general}
\begin{split}
\mathbf{u}(z;\Lambda)
=
\sum_{k=1}^{k_\mathrm{max}}
\sum_{q=1}^{q_k}
\sum_{j=1}^{m_k}
&\ \sum_{@\in\{+,-\}}
\ \frac{c_{k,q,j}^{@}}{\sqrt{@2\pi \left. (\dr h_k/\dr\zeta)\right|_{\zeta=\zeta^{@}_{k,q}(\Lambda)}}} \mathbf{w}^@_{k,q,j}(\Lambda)\,\er^{\ir\zeta^@_{k,q}(\Lambda)\,z}
\\
&+
\sum_{r=1}^{ms-p} c_r\, \mathbf{f}_r(z;\Lambda),
\end{split}
\end{equation}
where $\mathbf{f}_1$, \dots, $\mathbf{f}_{ms-p}$ are linearly independent solutions of \eqref{eq:1dimProblA} tending to $0$ as $z\to+\infty$, and the coefficients $c_{k,q,j}^{@}$ are not all zero.

The coefficients $c_{k,q,j}^{@}$ are called \emph{incoming} ($@=-$) and \emph{outgoing} ($@=+$) complex wave amplitudes.

\

{\bf Step 4}: {\em The scattering matrix}.
Requiring that \eqref{eq:efcontspec_general} satisfies the boundary conditions \eqref{eq:1dimProblBC} allows one to express the outgoing amplitudes $\mathbf{c}^+$ in terms of the incoming amplitudes $\mathbf{c}^-$. This defines the \emph{scattering matrix} $S^{(l)}(\Lambda)$, a $p^{(l)}\times p^{(l)}$ unitary matrix, via
\begin{equation*}
\mathbf{c}^+=
S^{(l)}(\Lambda)\,\mathbf{c}^-\,.
\end{equation*}
The order in which coefficients $c_{k,q,j}^{@}$ are arranged into $p^{(l)}$-dimensional columns $\mathbf{c}^{@}$ is unimportant.

\

{\bf Step 5}: {\em The phase shift}.
Compute the \emph{phase shift} $\varphi(\Lambda)$, defined in accordance with
\begin{equation}
\label{eq:SpectralShiftFn}
\varphi(\Lambda):=
\begin{cases}
0 & \text{for }\Lambda\le \Lambda_*^{(1)},\\
\arg\left( \det S^{(l)}(\Lambda) \right) + \mathfrak{s}^{(l)} & \text{for }\Lambda\in I^{(l)}.
\end{cases}
\end{equation}
The quantities $\mathfrak{s}^{(l)}$, $l=1,\dots,\overline{m}$, are some real constants whose role is to account for the fact that our construction of orthonormal bases for incoming and outgoing complex wave amplitudes involves a rigid ($\Lambda$-independent) unitary gauge degree of freedom, see Step 3 above.
The branch of the multi-valued function $\arg$ appearing in formula \eqref{eq:SpectralShiftFn} is assumed to be chosen in such a way that the phase shift $\varphi(\Lambda)$ is continuous in each interval $I^{(l)}$.

For each $l$, suppose that equation \eqref{eq:zetaRoots} with $\Lambda=\Lambda_*^{(l)}$ has a multiple real root for precisely one $k=k^{(l)}$, and that this multiple real root $\zeta=\zeta_*^{(l)}$ is unique and is a double root\footnote{This is the generic situation. In the general case the constants $\mathfrak{s}^{(l)}$ are obtained by integrating the trace of an appropriate generalised resolvent, see \cite[Eqn.~(1.14)]{Vas84}.}. Then the constants $\mathfrak{s}^{(l)}$ in \eqref{eq:SpectralShiftFn} are determined by requiring that the jumps of the phase shift at the thresholds satisfy
\begin{equation}
\label{eq:jumpsSSF_gen}
\frac{1}{\pi}\,\lim_{\epsilon\to 0^+} \left(\varphi(\Lambda_*^{(l)}+\epsilon)-\varphi(\Lambda_*^{(l)}-\epsilon) \right)= j_*^{(l)} -\frac{m_{k^{(l)}}}{2}\,,
\end{equation}
where $j_*^{(l)}$ is the number of linearly independent vectors $\mathbf{v}$ such that
\begin{equation}
\label{eq:oscillSolThre}
\mathbf{u}(z)=\mathbf{v} \,\er^{\ir\zeta_*^{(l)}z}+\mathbf{f}(z)
\end{equation}
is a solution of the one-dimensional problem \eqref{eq:1dimProblA}, \eqref{eq:1dimProblBC}, with $\mathbf{f}(z)=o(1)$ as $z \to +\infty$.

The threshold $\Lambda_*^{(l)}$ is called \emph{rigid} if $j_*^{(l)}=0$ and \emph{soft} if $j_*^{(l)}=m_{k^{(l)}}$. For rigid and soft thresholds formula \eqref{eq:jumpsSSF_gen} simplifies and reads
\begin{equation*}
\frac{1}{\pi}\,\lim_{\epsilon\to 0^+} \left(\varphi(\Lambda_*^{(l)}+\epsilon)-\varphi(\Lambda_*^{(l)}-\epsilon) \right)= \mp\frac{m_{k^{(l)}}}{2}\,,
\end{equation*}
minus for rigid and plus for soft.

\

{\bf Step 6}: {\em The one-dimensional counting function}.
Compute the \emph{one-dimensional counting function} 
\begin{equation*}
N(\Lambda):=\# \{ \text{eigenvalues of \eqref{eq:1dimProblA}, \eqref{eq:1dimProblBC} smaller than }\Lambda\}.
\end{equation*}

\

Application of Steps 1--6 of the above algorithm to the elasticity operator with Dirichlet or free boundary conditions, which will be done in the next three sections, gives
\begin{thm}
\label{thm:shiftDir}
\begin{equation}
\label{eq:shiftDir}
\mathrm{shift}_{\Dir}(\xi',\Lambda)
=
\begin{cases}
0 &\text{for}\ \Lambda\le \mu\|\xi'\|^2,\\
-\frac{1}{\pi}\arctan\left(\sqrt{\left(1-\frac{\Lambda}{\lambda+2\mu}\frac1{\|\xi'\|^2}\right)\left(\frac{\Lambda}{\mu}\frac1{\|\xi'\|^2}-1\right)} \right)-\frac{d-1}4&\text{for}\ \mu\|\xi'\|^2<\Lambda< (\lambda+2 \mu)\|\xi'\|^2,\\
-\frac{d}4&\text{for}\ \Lambda> (\lambda+2 \mu)\|\xi'\|^2,
\end{cases}
\end{equation}
\end{thm}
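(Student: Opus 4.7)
My plan is to carry out Steps 1--6 of the algorithm from \S\ref{sec:algorithm} for $\mathcal{L}_\Dir$. The principal symbol of $\mathcal{L}$ has two distinct eigenvalues $\mu\|\xi\|^2$ (multiplicity $d-1$) and $(\lambda+2\mu)\|\xi\|^2$ (multiplicity $1$) of constant multiplicity, so Assumption~\ref{ass:ConstMul} holds. Aligning the tangential momentum as $\xi' = (\eta, 0, \dots, 0)$ with $\eta := \|\xi'\|$, the one-dimensional Dirichlet problem reads $\mathcal{L}'\mathbf{u}(z) = \Lambda \mathbf{u}(z)$ on $z>0$, $\mathbf{u}(0) = 0$. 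The thresholds are the values of $\Lambda$ for which $h_k(\zeta) = \Lambda$ has a double real root (at $\zeta=0$), namely $\Lambda_*^{(1)} = \mu\eta^2$ and $\Lambda_*^{(2)} = (\lambda+2\mu)\eta^2$; this yields precisely the three regimes appearing in \eqref{eq:shiftDir}.

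For Steps 3 and 4, I would exploit the splitting of the transverse eigenspace into $d-2$ \emph{horizontal-shear} directions (spanned by $\mathbf{e}_2,\dots,\mathbf{e}_{d-1}$, each decoupled from the longitudinal mode and each giving a Dirichlet reflection coefficient $-1$) plus the \emph{SV} direction in the plane spanned by $\xi'$ and the inward normal, which couples with the longitudinal (P) mode only through the boundary. In the two-wave regime $\Lambda > \Lambda_*^{(2)}$, substituting the four-wave ansatz \eqref{eq:efcontspec_general} into $\mathbf{u}(0)=0$ gives a linear system $M^+\mathbf{c}^+ = M^-\mathbf{c}^-$; a direct calculation gives $\det M^+ = \det M^-$, so the SV--P block has scattering determinant $1$ and $\det S^{(2)} = (-1)^{d-2}$. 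In the intermediate regime $\Lambda_*^{(1)} < \Lambda < \Lambda_*^{(2)}$, the P wave is evanescent with rate $\kappa_P = \sqrt{\eta^2 - \Lambda/(\lambda+2\mu)}$; eliminating the single evanescent coefficient from the two in-plane Dirichlet equations yields the unimodular $1\times 1$ SV scattering coefficient
\[
S_{\mathrm{SV}} = \frac{-\eta^2 + \ir\,\zeta_S\,\kappa_P}{\eta^2 + \ir\,\zeta_S\,\kappa_P}, \qquad \zeta_S = \sqrt{\Lambda/\mu - \eta^2},
\]
with continuous-branch argument $\pi - 2\arctan(\zeta_S\kappa_P/\eta^2)$. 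Combined with the horizontal-shear factors, $\arg(\det S^{(1)}) = (d-1)\pi - 2\arctan\!\bigl(\sqrt{(1-\Lambda/((\lambda+2\mu)\eta^2))(\Lambda/(\mu\eta^2)-1)}\bigr)$.

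For Step 5, I would verify that both thresholds are rigid, i.e.\ $j_*^{(l)}=0$: at $\Lambda_*^{(1)}$ the only available decaying mode is the evanescent P wave whose polarization lies in the $(\mathbf{e}_1,\mathbf{e}_d)$-plane, and this cannot cancel any nonzero constant-wavenumber shear polarization at $z=0$ compatibly with Dirichlet; at $\Lambda_*^{(2)}$ no decaying mode exists at all (SV still oscillates and P is at threshold), so $\mathbf{v}$ must vanish. Then \eqref{eq:jumpsSSF_gen} produces jumps $-(d-1)\pi/2$ and $-\pi/2$, which, together with $\varphi\equiv 0$ below $\Lambda_*^{(1)}$, determine $\mathfrak{s}^{(1)}$, $\mathfrak{s}^{(2)}$ and yield $\varphi = -(d-1)\pi/2 - 2\arctan(\sqrt{\cdots})$ on $I^{(1)}$ and $\varphi\equiv -d\pi/2$ on $I^{(2)}$. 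For Step 6, $N_\Dir\equiv 0$: the horizontal-shear components contribute no bound state, and the in-plane Dirichlet determinantal condition $\kappa_S\kappa_P = \eta^2$ admits no solution in $0<\Lambda<\mu\eta^2$. Dividing $\varphi$ by $2\pi$ yields \eqref{eq:shiftDir}.

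I expect the main technical obstacle to be Step 4 for the coupled SV--P block: careful bookkeeping of the polarization signs and of the normalization factors $\sqrt{\pm 2\pi\,\dr h_k/\dr\zeta}$ prescribed by \eqref{eq:efcontspec_general} is required to establish the identity $\det M^+ = \det M^-$ (which makes the whole $\Lambda$-dependent phase carried by $S_{\mathrm{SV}}$ in the intermediate regime, and a pure sign in the two-wave regime); and, in Step 5, one must enumerate decaying modes compatible with Dirichlet to justify rigidity of both thresholds, a point where sign errors would shift the asymptotics by multiples of $\pi$ and spoil the matching with the constant value $-d/4$ above the second threshold.
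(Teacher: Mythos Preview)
Your proposal is correct and follows essentially the same approach as the paper: the paper formalises your horizontal-shear/SV--P splitting as an invariant-subspace decomposition $V_\perp\oplus V_\parallel$ (Lemma~\ref{lem:invsub}), treats the $(d-2)$-dimensional $V_\perp$ block and the two-dimensional $V_\parallel$ block separately --- the latter via an explicit reduction to planar elasticity (Fact~\ref{fact:reduction2D}) --- and arrives at the same scattering matrices, rigidity of both thresholds, and absence of bound states that you describe. Your intermediate-regime coefficient $S_{\mathrm{SV}}=(-\eta^2+\ir\zeta_S\kappa_P)/(\eta^2+\ir\zeta_S\kappa_P)$ coincides (up to an irrelevant branch shift absorbed by $\mathfrak{s}^{(1)}$) with the paper's $S_{\Dir,\plane}^{(1)}$, and your rigidity argument at $\Lambda_*^{(1)}$ is exactly the observation that the evanescent P mode has a nonvanishing $\mathbf{e}_1$-component which Dirichlet forces to zero.
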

\noindent and
\begin{thm}
\label{thm:shiftfree}
\begin{equation}
\label{eq:shiftfree}
\mathrm{shift}_{\free}(\xi',\Lambda)
=
\begin{cases}
0 &\text{for}\ \Lambda< \mu\gamma_R^2\|\xi'\|^2,\\
1 &\text{for}\ \mu\gamma_R^2\|\xi'\|^2<\Lambda< \mu\|\xi'\|^2,\\
\frac{1}{\pi}
\arctan
\left(
\frac
{
\left(\frac{\Lambda}{\mu}\frac1{\|\xi'\|^2}-2\right)^2
}
{
4\,\sqrt{\left(1-\frac{\Lambda}{\lambda+2\mu}\frac1{\|\xi'\|^2}\right)\left(\frac{\Lambda}{\mu}\frac1{\|\xi'\|^2}-1\right)}
}
\right)+\frac{d-1}4&\text{for}\ \mu\|\xi'\|^2<\Lambda< (\lambda+2 \mu)\|\xi'\|^2,\\
\frac{d}4&\text{for}\ \Lambda> (\lambda+2 \mu)\|\xi'\|^2.
\end{cases}
\end{equation}
\end{thm}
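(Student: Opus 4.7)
The plan is to apply the six-step algorithm of \S\ref{sec:algorithm} to $\mathcal{L}_\free$. By Fact~\ref{fact:6} one may work in the Euclidean half-space $\{(x',z):z>0\}$, reducing after Fourier transform in $x'$ to a matrix ODE in $z$ subject to the traction-free boundary condition at $z=0$. The principal symbol of $\mathcal{L}$ has two eigenvalues, $h_1(\zeta)=\mu(\|\xi'\|^2+\zeta^2)$ of multiplicity $d-1$ (transverse) and $h_2(\zeta)=(\lambda+2\mu)(\|\xi'\|^2+\zeta^2)$ of multiplicity $1$ (longitudinal), each minimised at $\zeta=0$. Hence the thresholds are $\Lambda_*^{(1)}=\mu\|\xi'\|^2$ and $\Lambda_*^{(2)}=(\lambda+2\mu)\|\xi'\|^2$, with three regimes: below $\Lambda_*^{(1)}$ (no propagating modes); $I^{(1)}=(\Lambda_*^{(1)},\Lambda_*^{(2)})$ with $p^{(1)}=d-1$ (transverse only); and $I^{(2)}=(\Lambda_*^{(2)},+\infty)$ with $p^{(2)}=d$. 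The transverse polarisations split naturally into one in-plane SV mode, lying in the span of $\xi'$ and $\hat z$, together with $d-2$ out-of-plane SH modes.

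I next compute the one-dimensional counting function $N$. The SH modes decouple, each yielding a scalar half-line problem whose traction condition reduces to $\partial_z u=0$, so they carry no discrete eigenvalues. For the coupled SV--P block I substitute the evanescent ansatz $\mathbf{u}(z)=\mathbf{A}\,\er^{-\beta_L z}+\mathbf{B}\,\er^{-\beta_T z}$ with $\beta_L=\sqrt{\|\xi'\|^2-\Lambda/(\lambda+2\mu)}$ and $\beta_T=\sqrt{\|\xi'\|^2-\Lambda/\mu}$ into the traction-free condition; the resulting $2\times 2$ linear system for the amplitudes has vanishing determinant precisely when $w=\Lambda/(\mu\|\xi'\|^2)$ solves Rayleigh's equation \eqref{eq:RayleighEquiv}. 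Its unique root $w=\gamma_R^2\in(0,1)$ supplies the Rayleigh surface wave at $\Lambda=\mu\gamma_R^2\|\xi'\|^2$ as the sole discrete eigenvalue, so $N(\xi',\Lambda)=0$ below it and $N(\xi',\Lambda)=1$ above. Combined with $\varphi\equiv 0$ on $(-\infty,\Lambda_*^{(1)}]$, this already establishes the first two cases of \eqref{eq:shiftfree}.

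I then compute $\arg\det S^{(l)}$ on each $I^{(l)}$. Since the $d-2$ SH modes reflect with coefficient $+1$, the SH block of $S^{(l)}$ has determinant $1$ and the nontrivial phase is carried entirely by the coupled SV--P block. In $I^{(1)}$ this block is $1\times 1$: an incoming SV wave reflects into an outgoing SV wave together with an evanescent P partner, and the classical elasticity algebra produces a unimodular reflection coefficient whose argument equals $2\arctan(Q/P)$, where $P=(\tau^{-2}-2)^2$ and $Q=4\sqrt{(1-\alpha\tau^{-2})(\tau^{-2}-1)}$ with $\tau^{-2}=\Lambda/(\mu\|\xi'\|^2)$. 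This reproduces, up to an additive constant, the arctan in the third branch of \eqref{eq:shiftfree}. In $I^{(2)}$ both modes propagate, the block becomes $2\times 2$ unitary, and a direct calculation shows its determinant to be a $\Lambda$-independent unimodular constant on the principal branch, matching the constant value in the fourth branch.

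The additive gauge constants $\mathfrak{s}^{(l)}$ are finally pinned down by \eqref{eq:jumpsSSF_gen}. At the transverse threshold, each of the $d-2$ constant SH modes automatically satisfies both the equation and the traction-free boundary condition, while the in-plane SV constant mode fails it and cannot be corrected by a decaying term, giving $j_*^{(1)}=d-2$ against $m_{k^{(1)}}=d-1$. At the longitudinal threshold, the only candidate constant mode (aligned with $\xi'$) likewise violates the boundary condition uncorrectably, giving $j_*^{(2)}=0$ against $m_{k^{(2)}}=1$ (a rigid threshold). Substituting into \eqref{eq:jumpsSSF_gen} and selecting the continuous branch of $\arg$ in each interval fixes the constants so as to produce the additive terms $(d-1)/4$ and $d/4$ in \eqref{eq:shiftfree}; combining with $N=1$ via $\mathrm{shift}_\free=\varphi/(2\pi)+N$ completes the proof. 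The main technical obstacle will be the algebra of the SV--P reflection coefficient, in particular extracting its complex-phase structure in the closed form above and tracking the $\arg$ branch consistently through both thresholds, so that the arctan functional form and the additive gauge constants emerge in agreement with \eqref{eq:shiftfree}.
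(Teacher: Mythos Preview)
Your approach is essentially the paper's: the SH/SV--P split is exactly the decomposition $V_\perp\oplus V_\parallel$ of \S\ref{sec:2termElast}, with your SV--P block being the reduction to the planar problem treated in \S\ref{sec:secondInvSub}. Two points need attention, however.

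First, a sign issue. You claim the SV reflection coefficient in $I^{(1)}$ has argument $2\arctan(Q/P)$ and that this ``reproduces, up to an additive constant, the arctan in the third branch''. But the target in \eqref{eq:shiftfree} is $\frac{1}{\pi}\arctan(P/Q)$, and since $\arctan(Q/P)+\arctan(P/Q)=\pi/2$ for $P,Q>0$, your expression differs from the target by a sign \emph{and} a constant, not merely a constant. The $\Lambda$-dependent part of $\varphi$ is gauge-invariant, so a wrong sign here cannot be absorbed into $\mathfrak{s}^{(1)}$ and would propagate to the final formula. The paper's computation \eqref{eq:argdet2Dfree} yields $\arg\det S_{\free,\plane}^{(1)}=2\arctan(P/Q)$; either you have swapped $P$ and $Q$ in your write-up, or the reflection algebra needs rechecking.

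Second, your assertion that $j_*^{(2)}=0$ unconditionally is not quite right. The constant longitudinal mode aligned with $\xi'$ satisfies the traction condition precisely when $\lambda=0$ (the normal traction picks up a factor $\lambda$), so $\Lambda_*^{(2)}$ is soft for $\lambda=0$ and rigid otherwise (Lemma~\ref{lem:thresholds2Dfree}). This does not affect the final formula, because the one-sided limit of $\arg\det S_{\free,\plane}^{(1)}$ as $\Lambda\to\Lambda_*^{(2)-}$ also depends on whether $\lambda=0$ (the numerator $(\Lambda/\mu-2)^2$ vanishes there exactly in that case), and the two effects compensate --- as the paper notes at the very end of \S\ref{subsec:secondInvSubfree}. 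But your proposal should account for this case-split rather than assert rigidity outright.
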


In particular,  Theorem~\ref{thm:shiftDir} will follow from \eqref{eq:shiftequivpar2D}, Lemma~\ref{lem:shiftDirperp} and Lemma~\ref{lem:shiftDir2D}, whereas Theorem~\ref{thm:shiftfree} will follow from \eqref{eq:shiftequivpar2D}, Lemma~\ref{lem:shiftfreeperp} and Lemma~\ref{lem:shiftfree2D}.

Substituting \eqref{eq:shiftDir} and \eqref{eq:shiftfree} into 
\eqref{eq:SecondWeylGeneral} and performing straightforward algebraic manipulations we arrive at \eqref{eq:maintheoremDir} and \eqref{eq:maintheoremfree}, respectively, thus proving Theorem \ref{thm:maintheorem}. Note that
\begin{equation*}
B_\aleph=\mathrm{Vol}_{d-1}(\partial\Omega)\, b_\aleph, \qquad \aleph\in\{\Dir,\free\}\,.
\end{equation*}

\section{Second Weyl coefficients for linear elasticity: invariant subspaces}
\label{sec:2termElast}

In this and the next two sections we will compute the spectral shift function for the operator of linear elasticity on a Riemannian manifold with boundary of arbitrary dimension $d\ge 2$,  both for Dirichlet and free boundary conditions, by explicitly implementing the algorithm from \S \ref{sec:algorithm}. This will establish Theorems~\ref{thm:shiftDir} and~\ref{thm:shiftfree}.

In order to substantially simplify the calculations, we will turn some ideas of Dupuis--Mazo--Onsager \cite{Onsager} into a rigorous mathematical argument, in the spirit of \cite{part2}.  Namely, we will introduce two invariant subspaces for the elasticity operator compatible with the boundary conditions,  implement the algorithm in each invariant subspace separately, and combine the results in the end. 

\

As explained in \S\ref{sec:mainresults} (see Fact~\ref{fact:6}) it is sufficient to determine the second Weyl coefficients in the Euclidean setting, $g_{\alpha\beta}=\delta_{\alpha\beta}$. Furthermore, the construction presented in the beginning of \S\ref{sec:algorithm} (see formulae \eqref{eq:coordPos}, \eqref{eq:coordMom}) allows us to work in a Euclidean half-space. Hence, further on $x=(x^1,\ldots,x^d)$ are Cartesian coordinates, $x'=(x^1,\ldots,x^{d-1})$, $z=x^d$ and $\Omega=\{z\ge0\}$. Accordingly, we write $\xi=(\xi_1,\ldots,\xi_d)$, $\xi'=(\xi_1,\ldots,\xi_{d-1})$ and $\zeta=\xi_d$.
%In what follows we assume that $\xi'\ne0$.

\

For starters, let us observe that the standard separation of variables leading to the one-dimensional problem 
\eqref{eq:1dimProblA}, \eqref{eq:1dimProblBC} can be achieved by seeking a solution of the form
\begin{equation*}
\er^{\ir \langle x', \xi'\rangle}\mathbf{u}(z)\,.
\end{equation*}

Next, suppose we have fixed $\xi'\in \mathbb{R}^{d-1}\setminus\{0\}$. Consider the pair of constant $d$-dimensional columns
\begin{equation*}
\frac1{\|\xi'\|}\begin{pmatrix}
\xi'
\\
0
\end{pmatrix}, 
\quad
\begin{pmatrix}
0'
\\
1
\end{pmatrix}\,,
\end{equation*}
where $0'$ stands for the $(d-1)$-dimensional column of zeros.
These define a two-dimensional plane
\[
P:=\operatorname{span}\left\{
\frac1{\|\xi'\|}\begin{pmatrix}
\xi'
\\
0
\end{pmatrix}\,, 
\
\begin{pmatrix}
0'
\\
1
\end{pmatrix}
\right\}\subset \mathbb{R}^d.
\]
 Let us denote by $\Pi$ the orthogonal projection onto $P$.  

%Then the space of vector fields 
%\[
%V:=C^\infty([0,+\infty);\mathbb{R}^d)
%\]
%decomposes as
%\[
%V=V_\parallel+ V_\perp,
%\]
%where
%\begin{equation}
%V_\parallel:=\Pi \,V, \qquad V_\perp:=(\mathrm{Id}-\Pi)V_\perp\,.
%\end{equation}

Now, the principal symbol of the elasticity operator reads
\begin{equation}
\label{eq:Lprin}
\mathcal{L}_\mathrm{prin}(\xi)=(\mathcal{L}')_\mathrm{prin}(\zeta)=\mu\|\xi\|^2 I + (\lambda+\mu)\xi\xi^T.
\end{equation}
Formula \eqref{eq:Lprin} immediately implies that the eigenvalues of the principal symbol are 
\begin{equation}
\label{eq:h1elast}
\tilde h_1(\xi)=h_1(\zeta)=\mu\|\xi\|^2, \qquad \text{of multiplicity $m_1=d-1$},
\end{equation}
and
\begin{equation}
\label{eq:h2elast}
\tilde h_2(\xi)=h_2(\zeta)=(\lambda+2\mu)\|\xi\|^2, \qquad \text{of multiplicity $m_2=1$}.
\end{equation}
Formulae \eqref{eq:mulambda}, \eqref{eq:h1elast} and \eqref{eq:h2elast} imply that Assumption~\ref{ass:ConstMul} is satisfied.
The eigenspaces corresponding to \eqref{eq:h1elast} and \eqref{eq:h2elast} are
\begin{equation*}
(I-\|\xi\|^{-2}\xi\xi^T)\,\mathbb{R}^d \qquad \text{and}\qquad \operatorname{span}\{\xi\},
\end{equation*}
respectively. 

It is easy to see that $\xi\in P\,$, $\,P^\perp\subset (I-\|\xi\|^{-2}\xi\xi^T)\,\mathbb{R}^d$, and that $P$ and $P^\perp$ are invariant subspaces of $\mathcal{L}_\mathrm{prin}$. Furthermore,  $\left.\mathcal{L}_\mathrm{prin}\right|_P$ has two simple eigenvalues,  $(\lambda+2\mu)\|\xi\|^2$ and $\mu\|\xi\|^2$, whereas $\left.\mathcal{L}_\mathrm{prin}\right|_{P^\perp}$ has one eigenvalue $\mu\|\xi\|^2$ of multiplicity $d-2$.

\

The above decomposition can be lifted to the space of vector fields. We define
\begin{equation*}
V_\parallel:=\{\mathbf{u}\in C^\infty[0,+\infty):  \mathbf{u}=\Pi\,\mathbf{u}\}
\end{equation*}
and
\begin{equation*}
V_\perp:=\{\mathbf{u}\in C^\infty[0,+\infty): \mathbf{u}=(I-\Pi)\,\mathbf{u}\}\,.
\end{equation*}

Let 
\begin{equation}
\label{eq:L'}
\mathcal{L}'=\mu\left(\|\xi'\|^2-\frac{\dr^2}{\dr z^2}\right) I 
-
(\lambda+\mu)\begin{pmatrix}
\ir\xi'
\\
\frac{\dr}{\dr z}
\end{pmatrix}
\begin{pmatrix}
\ir\xi'
&
\frac{\dr}{\dr z}
\end{pmatrix}
\end{equation}
and
\begin{equation}
\label{eq:T'}
\mathcal{T}'
=
-\lambda
\begin{pmatrix}
0'
\\
1
\end{pmatrix}
\begin{pmatrix}
\ir\xi'
& \frac{\dr}{\dr z}
\end{pmatrix}
-\mu \left(I\frac{\dr}{\dr z}
+
\begin{pmatrix}
\ir\xi'
\\ 
\frac{\dr}{\dr z}
\end{pmatrix}
\begin{pmatrix}
0'
&
1
\end{pmatrix}
\right)
\end{equation}
be the one-dimensional operators associated with $\mathcal{L}$ and $\mathcal{T}$, respectively; recall that the latter are defined by formulae \eqref{eq:elasticity} and \eqref{eq:T}.
It turns out that the linear spaces $V_\parallel$ and $V_\perp$ are invariant subspaces of $\mathcal{L}'$ compatible with the boundary conditions.

\begin{lem}
\label{lem:invsub}
We have
\begin{enumerate}[(a)]
\item
\begin{equation}
\label{eq:invsub1}
\mathcal{L}' V_\parallel \subset V_\parallel\,,
\end{equation}
\begin{equation}
\label{eq:invsub2}
\mathcal{L}' V_\perp\subset V_\perp\,.
\end{equation}
\item
\begin{equation}
\label{eq:invsub3}
\left.\left(\mathcal{T}' V_\parallel\right)\right|_{z=0}\subset \left. V_\parallel\right|_{z=0}\,,
\end{equation}
%\begin{equation}
%\label{eq:invsub4}
%\Pi\left(\left.\mathcal{T}' V_\perp\right|_{z=0}\right)=\{\mathbf{0}\}\,.
%\end{equation}
\begin{equation}
\label{eq:invsub4}
\left.\left(\mathcal{T}' V_\perp\right)\right|_{z=0}\subset \left. V_\perp\right|_{z=0}\,.
\end{equation}
\end{enumerate}
\end{lem}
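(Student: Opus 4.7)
My plan is to verify the inclusions \eqref{eq:invsub1}--\eqref{eq:invsub4} directly by exploiting the explicit forms \eqref{eq:L'} and \eqref{eq:T'} of $\mathcal{L}'$ and $\mathcal{T}'$. Since $\xi'$ is a fixed parameter, the projections $\Pi$ and $I-\Pi$ onto $P$ and $P^\perp$ have $z$-independent coefficients and commute with $\dr/\dr z$; this reduces the invariance statements to an elementary linear-algebra check performed pointwise at each $z\ge 0$.

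The structural observation driving the proof is that the two column-valued objects appearing in \eqref{eq:L'} and \eqref{eq:T'}, namely the differential column $(\ir\xi',\,\dr/\dr z)^T$ and the constant column $(0',\,1)^T$, both take values in $P$: the former has its tangential part proportional to $\xi'$ and its last entry along $e_d$, while the latter is $e_d$ itself. Consequently, any outer-product term in $\mathcal{L}'$ or $\mathcal{T}'$ of the form ``column in $P$'' times ``scalar row operator'' produces a vector field lying pointwise in $P$, irrespective of the input $\mathbf{u}$.

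From here, the invariance of $V_\parallel$ in \eqref{eq:invsub1} and \eqref{eq:invsub3} follows by splitting $\mathcal{L}'$ and $\mathcal{T}'$ into their scalar-times-identity components --- $\mu(\|\xi'\|^2-\dr^2/\dr z^2)I$ for $\mathcal{L}'$ and $-\mu I\,\dr/\dr z$ for $\mathcal{T}'$ --- which manifestly preserve $V_\parallel$, plus the outer-product remainders, which land in $P$ by the structural observation above. The invariance of $V_\perp$ in \eqref{eq:invsub2} and \eqref{eq:invsub4} uses the complementary fact: for $\mathbf{u}\in V_\perp$, the tangential part $\mathbf{u}'$ is orthogonal to $\xi'$ and $u^d=0$, so the scalar row operators appearing in the outer products,
\[
(\ir\xi',\, \tfrac{\dr}{\dr z})\,\mathbf{u}=\ir\xi'\cdot\mathbf{u}'+\tfrac{\dr u^d}{\dr z} \qquad\text{and}\qquad (0',\,1)\,\mathbf{u}=u^d,
\]
both vanish identically; hence every rank-one-like term in $\mathcal{L}'$ and $\mathcal{T}'$ annihilates $\mathbf{u}$, leaving only the scalar-times-identity parts, which trivially preserve $V_\perp$.

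Finally, the restriction $|_{z=0}$ in part (b) is automatic once the pointwise inclusions $\mathcal{T}'V_\parallel\subset V_\parallel$ and $\mathcal{T}'V_\perp\subset V_\perp$ are established at the level of smooth vector fields on $[0,+\infty)$. The argument presents no real obstacle: the whole proof reduces to identifying which vector-valued objects in the operator formulas lie in $P$, verifying that their corresponding ``transpose'' row operators annihilate $V_\perp$, and noting that $\dr/\dr z$ commutes with the $z$-independent projections $\Pi$ and $I-\Pi$.
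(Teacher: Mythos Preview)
Your proof is correct and follows essentially the same direct-verification approach as the paper. The paper computes $\mathcal{L}'\mathbf{u}_\parallel$, $\mathcal{L}'\mathbf{u}_\perp$, $\mathcal{T}'\mathbf{u}_\parallel$, $\mathcal{T}'\mathbf{u}_\perp$ explicitly term by term on generic elements, whereas you package the same computation structurally by observing that the rank-one pieces of \eqref{eq:L'} and \eqref{eq:T'} have columns valued in $P$ and rows that annihilate $V_\perp$; the content is identical.
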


\begin{proof}
(a) A generic element of $V_\parallel$ reads
\begin{equation*}
\mathbf{u}_\parallel(z)=\frac{1}{\|\xi'\|}\begin{pmatrix}
\xi'
\\
0
\end{pmatrix}
f_1(z)
+
\begin{pmatrix}
0'
\\
1
\end{pmatrix} 
f_2(z)\,, \qquad f_1,f_2 \in C^\infty[0,+\infty).
\end{equation*}
Acting with \eqref{eq:L'} on $\mathbf{u}_\parallel(z)$ we get
\begin{equation*}
\begin{split}
(\mathcal{L}'\mathbf{u}_\parallel)(z)
&
=
\mu\|\xi'\|^2 \mathbf{u}_\parallel(z) 
-\mu\frac{1}{\|\xi'\|}
\begin{pmatrix}
\xi'
\\
0
\end{pmatrix}
f_1''(z)
-\mu
\begin{pmatrix}
0'
\\
1
\end{pmatrix} 
f_2''(z)
\\&-
\ir(\lambda+\mu)\|\xi'\|
\begin{pmatrix}
\ir\xi' f_1(z)
\\
f_1'(z)
\end{pmatrix}
-
(\lambda+\mu)
\begin{pmatrix}
\ir\xi' f_2'(z)
\\
f_2''(z)
\end{pmatrix}
\\
&
=
\frac{1}{\|\xi'\|}
\begin{pmatrix}
\xi'
\\
0
\end{pmatrix}
\left((\lambda+2\mu) \|\xi'\|^2f_1(z)-\mu f_1''(z) -\ir(\lambda+\mu)\|\xi'\|f_2'(z) \right)
\\
&
+
\begin{pmatrix}
0'
\\
1
\end{pmatrix} 
\left(\mu  \|\xi'\|^2 f_2(z)-(\lambda+2\mu) f_2''(z)-
\ir(\lambda+\mu)\|\xi'\|f_1'(z)\right)\,,
\end{split}
\end{equation*}
which is an element of $V_\parallel$.  This proves \eqref{eq:invsub1}.

A generic element of $V_\perp$ reads
\begin{equation*}
\mathbf{u}_\perp(z)=\sum_{j=1}^{d-2}\begin{pmatrix}
\psi_j
\\
0
\end{pmatrix}
f_j(z), \qquad f_j\in C^\infty[0,+\infty),
\end{equation*}
where $\psi_j$, $j=1,\dots,d-2$,  are linearly independent columns in $\mathbb{R}^{d-1}$ orthogonal to $\xi'$.
Acting with \eqref{eq:L'} on $\mathbf{u}_\perp(z)$ we get
\begin{equation*}
(\mathcal{L}'\mathbf{u}_\perp)(z)=\sum_{j=1}^{d-2}\begin{pmatrix}
\psi_j
\\
0
\end{pmatrix}
\,\mu \left(\|\xi'\|^2 f_j(z)-f_j''(z)\right)\,,
\end{equation*}
which is an element of $V_\perp$.  This proves \eqref{eq:invsub2}.

(b) Acting with \eqref{eq:T'} on $\mathbf{u}_\parallel(z)$ we get
\begin{equation*}
-\left.\left(\mathcal{T}'\mathbf{u}_\parallel\right)\right|_{z=0}=
\frac{1}{\|\xi'\|}
\begin{pmatrix}
\xi'
\\
0
\end{pmatrix}
\left(\mu f_1'(0)+\ir \mu \|\xi'\|f_2(0)\right)
+
\begin{pmatrix}
0'
\\
1
\end{pmatrix} 
\left( \ir\lambda\|\xi'\| f_1(0)+(\lambda+2\mu) f_2'(0) \right)\,,
\end{equation*}
from which one obtains \eqref{eq:invsub3}.

Acting with \eqref{eq:T'} on $\mathbf{u}_\perp(z)$ we get
\begin{equation*}
-\left.\left(\mathcal{T}'\mathbf{u}_\perp\right)\right|_{z=0}=
\sum_{j=1}^{d-2}\begin{pmatrix}
\psi_j
\\
0
\end{pmatrix}
\,\mu f_j'(0)\,,
\end{equation*}
which immediately implies \eqref{eq:invsub4}.
\end{proof}

Lemma~\ref{lem:invsub} implies, via a standard density argument,  that the operators $\mathcal{L}'_\aleph$, $\aleph\in\{\Dir, \free\}$,  decompose as
\begin{equation*}
\mathcal{L}'_{\aleph}=\mathcal{L}'_{\aleph,\perp}\oplus \mathcal{L}'_{\aleph,\parallel},
\end{equation*}
where
$\mathcal{L}'_{\aleph,\perp}:=\left.\mathcal{L}'_{\aleph}\right|_{(I-\Pi)D(\mathcal{L}'_{\aleph})}$ and $\mathcal{L}'_{\aleph,\parallel}:=\left.\mathcal{L}'_{\aleph}\right|_{\Pi D(\mathcal{L}'_{\aleph})}$,  $D(\mathcal{L}'_{\aleph})$ being the domain of $\mathcal{L}'_{\aleph}$.

It is then a straightforward consequence of the Spectral Theorem that we can compute the spectral shift function for $\mathcal{L}'_{\aleph,\perp}$ and  $\mathcal{L}'_{\aleph,\parallel}$ separately, and sum up the results in the end.  More formally, we have
\begin{equation*}
\mathrm{shift}_{\aleph}=\mathrm{shift}_{\aleph,\perp}+\mathrm{shift}_{\aleph,\parallel}, \qquad \aleph\in\{\Dir,\free\}.
\end{equation*}

\textbf{Additional simplification}: it suffices to implement our algorithm for the special case
\begin{equation}
\label{eq:specialxi}
\xi'=\begin{pmatrix}
0
\\
\vdots
\\
0
\\
1
\end{pmatrix}\in \mathbb{R}^{d-1}.
\end{equation}
The general case can then be recovered by rescaling the spectral parameter in the end,  in accordance with
\begin{equation*}
\Lambda \mapsto \frac{\Lambda}{\|\xi'\|^2}\,.
\end{equation*}
In the next two sections,  we assume \eqref{eq:specialxi}.

\section{First invariant subspace: normally polarised waves}
\label{sec:firstInvSub}

In this section we will compute the spectral shift functions $\mathrm{shift}_{\aleph,\perp}$ for the operators $\mathcal{L}'_{\aleph,\perp}$, $\aleph\in\{\Dir, \free\}$.

\subsection{Dirichlet boundary conditions}
\label{subsec:firstInvSubDir}
Consider the spectral problem
\begin{equation}
\label{eq:EPperpDir}
\mathcal{L}'_\perp \mathbf{u}_\perp= \mu\left(1-\frac{\dr^2}{\dr z^2}\right) \mathbf{u}_\perp=\Lambda \mathbf{u}_\perp,
\end{equation}
\begin{equation}
\label{eq:BCperpDir}
\mathbf{u}_\perp|_{z=0}=0\,.
\end{equation}
The goal of this subsection is to prove the following result.
\begin{lem}
\label{lem:shiftDirperp}
We have
\begin{equation*}
\varphi_{\Dir,\perp}(\Lambda)=-\frac{(d-2)\pi}{2}\,\chi_{[\mu,+\infty)}(\Lambda)
\end{equation*}
and
\begin{equation*}
N_{\Dir,\perp}(\Lambda)=0,
\end{equation*}
so that
\begin{equation*}
\mathrm{shift}_{\Dir,\perp}(\Lambda)=-\frac{d-2}{4}\,\chi_{[\mu,+\infty)}(\Lambda).
\end{equation*}
\end{lem}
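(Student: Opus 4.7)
Since on $V_\perp$ the operator $\mathcal{L}'_\perp$ acts diagonally as $\mu(1-\dr^2/\dr z^2)$ on each of the $d-2$ independent scalar components (recall we have normalised $\|\xi'\|=1$), its principal symbol has the single eigenvalue $h(\zeta)=\mu(1+\zeta^2)$ of multiplicity $m_1=d-2$; in the notation of \S\ref{sec:algorithm} we have $m=d-2$ and $s=1$. I only need to unpack Steps 2--6 for this scalar-like problem. The equation $h(\zeta)=\Lambda_*$ has a double real root iff $\Lambda_*=\mu$, producing a single threshold; for $\Lambda>\mu$ the two real roots are $\zeta^{\pm}=\pm\sqrt{\Lambda/\mu-1}$, so $k_{\max}=1$, $q_1=1$, $p=d-2$, and since $ms-p=(d-2)\cdot 1-(d-2)=0$ no decaying correction $\mathbf{f}_r$ enters \eqref{eq:efcontspec_general}.

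A generalised eigenfunction for $\Lambda>\mu$ therefore reads
\[
\mathbf{u}(z;\Lambda)
=\sum_{j=1}^{d-2}\sum_{@\in\{+,-\}}
\frac{c_j^{@}}{\sqrt{4\pi\mu\sqrt{\Lambda/\mu-1}}}\,\psi_j\,\er^{\ir\zeta^{@} z},
\]
where $\{\psi_j\}_{j=1}^{d-2}$ is any $\Lambda$-independent orthonormal basis of the subspace of $\mathbb{R}^d$ orthogonal to both $\xi'$ and the normal, so that the Step~3 gauge condition is satisfied automatically. Imposing the Dirichlet condition \eqref{eq:BCperpDir} yields $c_j^{+}+c_j^{-}=0$ for each $j$, hence the scattering matrix is $S^{(1)}(\Lambda)=-I_{d-2}$, \emph{constant} in $\Lambda$; consequently $\arg\det S^{(1)}$ is constant on $(\mu,+\infty)$, and the value of $\varphi_{\Dir,\perp}$ there is determined entirely by the threshold jump rule \eqref{eq:jumpsSSF_gen}.

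At $\Lambda_{*}=\mu$ one has $\zeta_{*}=0$, so a candidate solution of the form \eqref{eq:oscillSolThre} is $\mathbf{u}(z)=\mathbf{v}+\mathbf{f}(z)$ with $\mathbf{f}(z)\to0$. At the threshold the general solution of \eqref{eq:EPperpDir} is affine in $z$, so $\mathbf{f}$ cannot contain the linear-in-$z$ mode (which does not decay) and must vanish identically; the Dirichlet condition then forces $\mathbf{v}=0$. Thus $j_{*}^{(1)}=0$, the threshold is rigid, and \eqref{eq:jumpsSSF_gen} produces a jump of $-(d-2)\pi/2$, giving $\varphi_{\Dir,\perp}(\Lambda)=-\tfrac{(d-2)\pi}{2}\chi_{[\mu,+\infty)}(\Lambda)$ once combined with $\varphi_{\Dir,\perp}(\mu^{-})=0$.

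Finally, the one-dimensional problem \eqref{eq:EPperpDir}, \eqref{eq:BCperpDir} has no point spectrum: for $\Lambda<\mu$ the only $L^2$ solution near $+\infty$ is proportional to $\er^{-\sqrt{1-\Lambda/\mu}\,z}$, which does not vanish at $z=0$. Hence $N_{\Dir,\perp}\equiv0$, and $\mathrm{shift}_{\Dir,\perp}=\varphi_{\Dir,\perp}/(2\pi)+N_{\Dir,\perp}=-\tfrac{d-2}{4}\chi_{[\mu,+\infty)}$, as claimed. The only step requiring genuine care is identifying $\Lambda_{*}=\mu$ as a rigid threshold and using \eqref{eq:jumpsSSF_gen} to pin down the otherwise-undetermined additive constant $\mathfrak{s}^{(1)}$; the remaining bookkeeping is routine.
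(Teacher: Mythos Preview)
Your proof is correct and follows essentially the same approach as the paper's: identify the single threshold $\Lambda_*=\mu$, compute the scattering matrix $S^{(1)}=-I_{d-2}$ from the Dirichlet condition, verify rigidity of the threshold (constant solutions cannot satisfy the Dirichlet condition), and check that the decaying exponential below threshold does not vanish at $z=0$. One small omission: you address the absence of eigenvalues only for $\Lambda<\mu$, whereas the paper also notes (trivially) that there are no embedded eigenvalues for $\Lambda\ge\mu$; this is immediate since the solutions there are purely oscillatory and not square-integrable, so it does not affect the validity of your argument.
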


Here and further on $\chi_A$ denotes the indicator function of a set $A\subset\mathbb{R}$.

We shall prove Lemma~\ref{lem:shiftDirperp} in several steps.

The principal symbol $(\mathcal{L}'_\perp)_\mathrm{prin}\,$, as a linear operator in $P^\perp$,  has only one eigenvalue
\begin{equation}
\label{eq:h1perp}
h_{1,\perp}(\zeta)=h_1(\zeta)=\mu(1+\zeta^2)
\end{equation}
of multiplicity $m_1^\perp=d-2$.
The eigenvalue \eqref{eq:h1perp} determines the threshold
\begin{equation}
\label{eq:thresholdperp}
\Lambda_*^{(1)}=\mu
\end{equation}
which, in turn, yields exponents
\begin{equation*}
\zeta_{1,1}^{\pm}(\Lambda)=\pm\sqrt{\frac{\Lambda}{\mu}-1}\,.
\end{equation*}

Therefore, the continuous spectrum of the operator $\mathcal{L}'_\perp$ contains a single interval $I^{(1)}_\perp:=(\mu, +\infty)$ and the multiplicity of the continuous spectrum on this interval is $p_\perp^{(1)}=1$.  For $\Lambda\in I^{(1)}_\perp$, the eigenfunctions of the continuous spectrum read
\begin{equation}
\label{eq:uperp}
\mathbf{u}_\perp(z;\Lambda)=\sum_{j=1}^{d-2}\mathbf{e}_j \left(c_j^+ \er^{\ir\sqrt{\frac{\Lambda}{\mu}-1}\,z}+c_j^- \er^{-\ir\sqrt{\frac{\Lambda}{\mu}-1}\,z}\right),
\end{equation}
where $(\mathbf{e}_j)_\alpha=\delta_{j \alpha}$.

Substituting \eqref{eq:uperp} into \eqref{eq:BCperpDir} we obtain
\begin{equation*}
S_{\Dir,\perp}^{(1)}(\Lambda)=-\mathrm{I}
\end{equation*}
which, in turn,  yields
\begin{equation}
\label{eq:detSDirperp}
\arg \det S_{\Dir,\perp}^{(1)}(\Lambda)=
\begin{cases}
0\quad\text{if $\,d\,$ is even,}
\\
\pi\quad\text{if $\,d\,$ is odd.}
\end{cases}
\end{equation}

\begin{lem}
\label{lem:thresholdDir1}
The threshold \eqref{eq:thresholdperp} for the problem \eqref{eq:EPperpDir}, \eqref{eq:BCperpDir} is rigid.
\end{lem}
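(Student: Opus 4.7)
The plan is to directly unpack the definition of a rigid threshold from Step~5 of the algorithm: I need to show that $j_*^{(1)} = 0$, where $j_*^{(1)}$ counts the linearly independent vectors $\mathbf{v}$ giving rise to solutions of the form \eqref{eq:oscillSolThre} of the one-dimensional Dirichlet problem \eqref{eq:EPperpDir}, \eqref{eq:BCperpDir} at the threshold value $\Lambda = \mu$.

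First I would identify the exponent at the threshold. Since the sole eigenvalue \eqref{eq:h1perp} of the principal symbol is $h_{1,\perp}(\zeta) = \mu(1+\zeta^2)$, the equation $h_{1,\perp}(\zeta) = \mu$ has the unique double root $\zeta_*^{(1)} = 0$, so $\er^{\ir\zeta_*^{(1)} z} \equiv 1$. Accordingly, \eqref{eq:oscillSolThre} takes the form $\mathbf{u}(z) = \mathbf{v} + \mathbf{f}(z)$ with $\mathbf{f}(z) = o(1)$ as $z \to +\infty$, where $\mathbf{v}$ is a constant vector in $P^\perp$.

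Next I would solve the ODE at the threshold. At $\Lambda = \mu$, \eqref{eq:EPperpDir} reduces to $\mathbf{u}_\perp''(z) = 0$ in each component, so the general solution on $V_\perp$ is
\begin{equation*}
\mathbf{u}_\perp(z) = \mathbf{a} + \mathbf{b}\, z, \qquad \mathbf{a},\mathbf{b} \in P^\perp.
\end{equation*}
Writing this in the form $\mathbf{v} + \mathbf{f}(z)$ with $\mathbf{f}(z) = o(1)$, the unboundedness of $z \mapsto \mathbf{b} z$ forces $\mathbf{b} = 0$, and then $\mathbf{v} = \mathbf{a}$ and $\mathbf{f} \equiv 0$. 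Imposing the Dirichlet condition \eqref{eq:BCperpDir} yields $\mathbf{a} = 0$, hence $\mathbf{v} = 0$. Therefore there are no nontrivial such vectors, $j_*^{(1)} = 0$, and the threshold is rigid by definition.

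There is essentially no obstacle here: the argument is a one-line ODE integration plus the trivial observation that the Dirichlet condition kills the only admissible constant mode. The slight subtlety worth flagging is simply that one must remember that $\mathbf{v}$ is required to live in $P^\perp$ (since we are restricting to the invariant subspace $V_\perp$), but this plays no role in the counting because the Dirichlet condition annihilates any constant vector regardless of the subspace.
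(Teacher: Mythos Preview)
Your proof is correct and follows essentially the same approach as the paper: both identify that the double root is $\zeta_*^{(1)}=0$, so the candidate solutions of the form \eqref{eq:oscillSolThre} reduce to constant vectors in $P^\perp$, which are immediately killed by the Dirichlet condition. You have simply spelled out more explicitly the ODE integration and the elimination of the linear term via the $o(1)$ requirement, which the paper leaves implicit.
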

\begin{proof}
It is straightforward to see that the problem \eqref{eq:EPperpDir}, \eqref{eq:BCperpDir} does not admit any solution of the form\footnote{Observe that the only real root of the equation $h_{1,\perp}(\zeta)=\Lambda_*^{(1)}$ is the double root $\zeta=0$.}
\begin{equation}
\label{Dima 15 July label 1}
\begin{pmatrix}
c_1
\\
\vdots
\\
c_{d-2}
\\
0
\\
0
\end{pmatrix},
\end{equation}
$c_1, \dots, c_{d-2}\in \mathbb{C}\,$,
other than the trivial one, from which the claim follows.
\end{proof}
\begin{lem}
\label{lem:evDir1}
The problem \eqref{eq:EPperpDir}, \eqref{eq:BCperpDir} does not have eigenvalues.
\end{lem}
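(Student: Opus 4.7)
The plan is to exploit the diagonal structure of $\mathcal{L}'_\perp$ on $V_\perp$. Since $P^\perp$ is spanned by the vectors $\mathbf{e}_1,\dots,\mathbf{e}_{d-2}$ (after our special choice of $\xi'$), and the operator $\mathcal{L}'_\perp$ acts as the scalar operator $\mu(1-\dr^2/\dr z^2)$ on each component independently (this is immediate from \eqref{eq:L'} restricted to $V_\perp$, as also verified in the proof of Lemma~\ref{lem:invsub}(a)), the spectral problem reduces to $d-2$ copies of the scalar ODE eigenvalue problem
\begin{equation*}
-\mu f''(z)+\mu f(z)=\Lambda f(z),\qquad z\in(0,+\infty),\qquad f(0)=0,\qquad f\in L^2(0,+\infty).
\end{equation*}

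The claim then reduces to showing that this scalar half-line problem has no eigenvalues. This is a direct computation with a constant-coefficient ODE. Setting $\kappa:=\sqrt{1-\Lambda/\mu}$, the general solution is $f(z)=A\,\er^{\kappa z}+B\,\er^{-\kappa z}$ when $\Lambda\neq\mu$, and $f(z)=A+Bz$ when $\Lambda=\mu$. For $\Lambda>\mu$ the exponents are purely imaginary, so $f\notin L^2(0,+\infty)$ unless $A=B=0$. For $\Lambda=\mu$ the condition $f\in L^2$ forces $A=B=0$. For $\Lambda<\mu$ (including negative and zero values of $\Lambda$), $\kappa>0$, so $L^2$-integrability forces $A=0$, and then the Dirichlet condition $f(0)=0$ forces $B=0$.

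Thus in every case the only solution is trivial, so no $\Lambda\in\mathbb{R}$ is an eigenvalue of the scalar problem, hence none is an eigenvalue of $\mathcal{L}'_{\Dir,\perp}$. There is no real obstacle here; the only mild subtlety is that one must separately handle the threshold value $\Lambda=\mu$, where the exponent $\kappa$ vanishes and the fundamental system degenerates, but the polynomial solutions $1,z$ are manifestly not in $L^2(0,+\infty)$.
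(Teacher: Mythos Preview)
Your proof is correct and follows essentially the same approach as the paper: both reduce the problem to the scalar half-line Dirichlet problem for $\mu(1-\dr^2/\dr z^2)$ and check directly that no $\Lambda$ yields a nontrivial $L^2$ solution. Your version is simply more explicit, spelling out the cases $\Lambda>\mu$, $\Lambda=\mu$, and $\Lambda<\mu$ separately, whereas the paper dismisses $\Lambda\ge\mu$ as ``easy to see'' and writes down the decaying exponential only for $\Lambda<\mu$.
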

\begin{proof}
It is easy to see that the problem \eqref{eq:EPperpDir}, \eqref{eq:BCperpDir} does not admit eigenvalues for $\Lambda\ge\mu$, i.e.~eigenvalues embedded in the continuous spectrum.
Furthermore, for $\Lambda<\mu$ a straightforward substitution shows that the only solution of \eqref{eq:EPperpDir}, \eqref{eq:BCperpDir} of the form
\begin{equation}
\label{Dima 15 July label 2}
\mathbf{u}_\perp(z;\Lambda)=\sum_{j=1}^{d-2}\mathbf{e}_j  c_j \er^{-\sqrt{1-\frac{\Lambda}{\mu}}\,z}
\end{equation}
is the trivial one. This concludes the proof.
\end{proof}

Combining formula \eqref{eq:detSDirperp},  Lemma~\ref{lem:thresholdDir1} and Lemma~\ref{lem:evDir1} one obtains Lemma~\ref{lem:shiftDirperp}.

\subsection{Free boundary conditions}
\label{subsec:firstInvSubfree}

Consider the spectral problem
\begin{equation}
\label{eq:EPperpfree}
\mathcal{L}'_\perp \mathbf{u}_\perp=\mu\left(1-\frac{\dr^2}{\dr z^2}\right) \mathbf{u}_\perp=\Lambda \mathbf{u}_\perp,
\end{equation}
\begin{equation}
\label{eq:BCperpfree}
\mathcal{T}'\mathbf{u}_\perp|_{z=0}=-\mu \,\mathbf{u}_\perp'(0)=0\,.
\end{equation}
The goal of this subsection is to prove the following result.
\begin{lem}
\label{lem:shiftfreeperp}
We have
\begin{equation*}
\varphi_{\free,\perp}(\Lambda)=\frac{(d-2)\pi}{2}\,\chi_{[\mu,+\infty)}(\Lambda)
\end{equation*}
and
\begin{equation*}
N_{\free,\perp}(\Lambda)=0,
\end{equation*}
so that
\begin{equation*}
\mathrm{shift}_{\free,\perp}(\Lambda)=\frac{d-2}{4}\,\chi_{[\mu,+\infty)}(\Lambda).
\end{equation*}
\end{lem}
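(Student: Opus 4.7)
The plan is to mirror the proof of Lemma~\ref{lem:shiftDirperp} step-by-step, tracking only the places where swapping the Dirichlet condition for the free boundary condition alters the outcome. The one-dimensional problem~\eqref{eq:EPperpfree} has the same principal symbol eigenvalue $h_{1,\perp}(\zeta)=\mu(1+\zeta^2)$ of multiplicity $d-2$, hence the same unique threshold $\Lambda_*^{(1)}=\mu$, the same continuous-spectrum multiplicity $p_\perp^{(1)}=1$, and the same generalised eigenfunctions
\[
\mathbf{u}_\perp(z;\Lambda)=\sum_{j=1}^{d-2}\mathbf{e}_j \left(c_j^+ \er^{\ir\sqrt{\frac{\Lambda}{\mu}-1}\,z}+c_j^- \er^{-\ir\sqrt{\frac{\Lambda}{\mu}-1}\,z}\right).
\]

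First I would substitute these into \eqref{eq:BCperpfree}. Differentiating and evaluating at $z=0$ gives $c_j^+=c_j^-$ for each $j$, so $S_{\free,\perp}^{(1)}(\Lambda)=\mathrm{I}$ and hence $\arg\det S_{\free,\perp}^{(1)}(\Lambda)=0$. This is the sign-flip counterpart of \eqref{eq:detSDirperp}.

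Next I would analyse the threshold. The relevant double root of $h_{1,\perp}(\zeta)=\mu$ is $\zeta_*^{(1)}=0$, so one looks for solutions of the form \eqref{eq:oscillSolThre} with oscillating part equal to a constant vector in $P^\perp$. For the free boundary condition, every constant field $\mathbf{v}=\sum_{j=1}^{d-2}c_j\mathbf{e}_j$ satisfies both $\mathcal{L}'_\perp\mathbf{v}=\mu\mathbf{v}$ and $\mathcal{T}'\mathbf{v}|_{z=0}=-\mu\mathbf{v}'(0)=0$. Thus $j_*^{(1)}=d-2=m_{k^{(1)}}^\perp$, so the threshold is \emph{soft}, and the simplified version of \eqref{eq:jumpsSSF_gen} forces the jump of the phase shift at $\Lambda=\mu$ to be $+\tfrac{(d-2)\pi}{2}$. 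Combined with $\varphi\equiv 0$ below threshold and $\arg\det S_{\free,\perp}^{(1)}=0$ above, this pins down the constant $\mathfrak{s}^{(1)}=\tfrac{(d-2)\pi}{2}$ and yields the claimed formula for $\varphi_{\free,\perp}$.

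Finally I would show $N_{\free,\perp}(\Lambda)=0$, arguing exactly as in the proof of Lemma~\ref{lem:evDir1}: for $\Lambda<\mu$ the only candidate $L^2$-solutions are of the form \eqref{Dima 15 July label 2}, and $\mathbf{u}_\perp'(0)=0$ then forces $-\sqrt{1-\Lambda/\mu}\,c_j=0$, hence $c_j=0$ for all $j$; the constant solutions at $\Lambda=\mu$ are not in $L^2$; and for $\Lambda>\mu$ any decaying solution must vanish, so no embedded eigenvalues occur either. Adding $\varphi_{\free,\perp}/(2\pi)$ and $N_{\free,\perp}$ gives $\mathrm{shift}_{\free,\perp}$. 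I do not anticipate a serious obstacle here: the whole argument is routine once one recognises that the only nontrivial change compared with the Dirichlet case is that the threshold flips from rigid to soft, which is precisely what produces the opposite sign in the final expression.
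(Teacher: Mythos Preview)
Your proposal is correct and follows essentially the same approach as the paper: compute $S_{\free,\perp}^{(1)}=\mathrm{I}$ from the boundary condition, observe that the threshold is soft because constant fields in $P^\perp$ satisfy both the equation and the traction condition, rule out eigenvalues by checking that the decaying candidates cannot meet $\mathbf{u}'_\perp(0)=0$, and assemble. The paper records these three ingredients as separate short lemmas (Lemmata~\ref{lem:thresholdfree1} and~\ref{lem:evfree1}) and combines them; your write-up is slightly more expansive but structurally identical.
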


Formulae \eqref{eq:h1perp}--\eqref{eq:uperp} apply unchanged to the free boundary case.  Substituting \eqref{eq:uperp} into \eqref{eq:BCperpfree} we obtain
\begin{equation*}
S_{\free,\perp}^{(1)}(\Lambda)=\mathrm{I}
\end{equation*}
which, in turn,  yields
\begin{equation}
\label{eq:detSfreeperp}
\arg \det S_{\free,\perp}^{(1)}(\Lambda) =0.
\end{equation}

\begin{lem}
\label{lem:thresholdfree1}
The threshold \eqref{eq:thresholdperp} for the problem \eqref{eq:EPperpfree}, \eqref{eq:BCperpfree} is soft.
%, with
%\begin{equation}
%\label{eq:thresholdfree1}
%j_{*,\perp}^{(1)}=d-2.
%\end{equation}
\end{lem}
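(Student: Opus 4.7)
My plan is to invoke the criterion for softness given in Step 5 of the algorithm, namely to exhibit $m_{k^{(1)}} = d-2$ linearly independent vectors $\mathbf{v}$ producing solutions of the form \eqref{eq:oscillSolThre} at the threshold $\Lambda_*^{(1)} = \mu$. This is the exact mirror of the argument used in Lemma~\ref{lem:thresholdDir1}, but with the opposite conclusion, because the free boundary condition annihilates the \emph{derivative} rather than the value.

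First, I would identify the data at the threshold. Since $h_{1,\perp}(\zeta) = \mu(1+\zeta^2)$ attains the value $\mu$ only at the double real root $\zeta_*^{(1)} = 0$, and the multiplicity of $h_{1,\perp}$ is $m_1^\perp = d-2$, the ansatz \eqref{eq:oscillSolThre} reduces to
\begin{equation*}
\mathbf{u}_\perp(z) = \mathbf{v} + \mathbf{f}(z), \qquad \mathbf{f}(z) = o(1) \text{ as } z \to +\infty,
\end{equation*}
with $\mathbf{v}$ in the relevant eigenspace, which for $\xi'$ of the form \eqref{eq:specialxi} is spanned by $\mathbf{e}_1,\dots,\mathbf{e}_{d-2}$.

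Next, I would solve the one-dimensional problem at $\Lambda = \mu$. Equation \eqref{eq:EPperpfree} becomes $\mathbf{u}_\perp''(z) = 0$, so the general solution in $V_\perp$ is $\mathbf{u}_\perp(z) = \mathbf{a} + \mathbf{b}\, z$ with $\mathbf{a}, \mathbf{b} \in \operatorname{span}\{\mathbf{e}_1,\dots,\mathbf{e}_{d-2}\}$. Imposing the free boundary condition \eqref{eq:BCperpfree}, which here reads $\mathbf{u}_\perp'(0) = 0$, forces $\mathbf{b} = 0$, while $\mathbf{a}$ remains arbitrary. Each choice $\mathbf{a} = \mathbf{e}_j$, $j=1,\dots,d-2$, yields a solution of \eqref{eq:EPperpfree}, \eqref{eq:BCperpfree} of the form \eqref{eq:oscillSolThre} with $\mathbf{v} = \mathbf{e}_j$ and $\mathbf{f} \equiv 0$. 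These are $d-2$ linearly independent vectors $\mathbf{v}$, so $j_*^{(1)} \ge d-2$. Since trivially $j_*^{(1)} \le m_{k^{(1)}} = d-2$, we conclude $j_*^{(1)} = m_{k^{(1)}}$ and the threshold is soft.

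There is essentially no hard step here: the entire argument is a direct linear-algebra check made possible by the very simple form of $\mathcal{L}'_\perp$ at $\Lambda = \mu$. The only thing to be careful about is the distinction with the Dirichlet case, where $\mathbf{v}|_{z=0} = 0$ forced $\mathbf{v}=0$; for the free condition the derivative vanishes automatically on constants, so all $d-2$ directions survive, giving the maximal value of $j_*^{(1)}$ and hence softness. This softness then contributes $+\frac{m_{k^{(1)}}}{2}\pi = \frac{(d-2)\pi}{2}$ to the jump of $\varphi_{\free,\perp}$ at $\Lambda = \mu$, consistent with the statement of Lemma~\ref{lem:shiftfreeperp}.
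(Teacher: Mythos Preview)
Your proof is correct and follows essentially the same approach as the paper: both verify that constant vectors $\mathbf{v}\in\operatorname{span}\{\mathbf{e}_1,\dots,\mathbf{e}_{d-2}\}$ satisfy \eqref{eq:EPperpfree}, \eqref{eq:BCperpfree} at $\Lambda=\mu$, yielding $j_*^{(1)}=d-2=m_{k^{(1)}}$. The paper simply states this fact in one line, while you additionally write out the general solution $\mathbf{a}+\mathbf{b}z$; note that $\mathbf{b}=0$ is already forced by the requirement $\mathbf{f}(z)=o(1)$ in \eqref{eq:oscillSolThre} (not only by the boundary condition), but this does not affect the conclusion.
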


\begin{proof}
Result follows from the fact that
%\footnote{As in Lemma~\ref{lem:thresholdDir1}, here we are using the fact that the only real root of the equation $h_{1,\perp}(\zeta)=\Lambda_*^{(1)}$ is the double root $\zeta=0$.}
\eqref{Dima 15 July label 1}
is a solution of \eqref{eq:EPperpfree}, \eqref{eq:BCperpfree} for all $c_1, \dots, c_{d-2}\in \mathbb{C}\,$.
%$c_j\in \mathbb{C}$, $j=1,\ldots, d-2$.
\end{proof}

\begin{lem}
\label{lem:evfree1}
The problem \eqref{eq:EPperpfree}, \eqref{eq:BCperpfree} does not have eigenvalues.
\end{lem}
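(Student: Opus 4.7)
The plan is to show that any $L^2([0,+\infty);\mathbb{R}^{d-2})$ solution of \eqref{eq:EPperpfree}, \eqref{eq:BCperpfree} must be trivial. The key observation is that, exactly as in the generic element of $V_\perp$ from the proof of Lemma~\ref{lem:invsub}, the operator $\mathcal{L}'_\perp$ acts componentwise on each of the $d-2$ transverse scalar components $u_j$ of $\mathbf{u}_\perp$. Hence the eigenvalue problem reduces to the family of scalar problems
\begin{equation*}
u_j''(z) = \left(1 - \frac{\Lambda}{\mu}\right) u_j(z)\qquad\text{on }[0,+\infty), \qquad u_j'(0) = 0, \qquad u_j\in L^2([0,+\infty)),
\end{equation*}
for $j=1,\ldots,d-2$, and it suffices to show each such problem has only the trivial solution.

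I would split into three regimes according to the sign of $1-\Lambda/\mu$. First, for $\Lambda<\mu$, the general solution is $u_j(z) = A\,\er^{-\kappa z} + B\,\er^{\kappa z}$ with $\kappa:=\sqrt{1-\Lambda/\mu}>0$; the $L^2$ requirement forces $B=0$, and then the Neumann condition $u_j'(0)=-\kappa A=0$ forces $A=0$ because $\kappa>0$ strictly. This handles all $\Lambda<\mu$ (and in particular rules out the possibility of any non-positive eigenvalue). Second, for $\Lambda=\mu$ the ODE reduces to $u_j''=0$, whose solutions $u_j(z)=A+Bz$ are never in $L^2([0,+\infty))$ unless $A=B=0$, so the threshold is not an embedded eigenvalue. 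Third, for $\Lambda>\mu$, every non-trivial solution is a non-vanishing linear combination of $\er^{\pm\ir\sqrt{\Lambda/\mu-1}\,z}$, hence oscillates and is manifestly not square-integrable on the half-line, so the continuous spectrum contains no embedded eigenvalues either.

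Combining the three cases yields the absence of eigenvalues claimed in Lemma~\ref{lem:evfree1}. I do not foresee any genuine obstacle here: the argument is essentially a one-line ODE computation in each regime, with the only subtlety being the bookkeeping at the threshold $\Lambda=\mu$, where a non-$L^2$ constant solution does exist (already used to identify the threshold as soft in Lemma~\ref{lem:thresholdfree1}) and must be carefully distinguished from a genuine eigenfunction.
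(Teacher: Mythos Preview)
Your proof is correct and follows essentially the same approach as the paper's own proof, which is simply a terser version of exactly this componentwise ODE analysis: for $\Lambda\ge\mu$ there are no square-integrable solutions, while for $\Lambda<\mu$ substituting the exponentially decaying ansatz into the free boundary condition forces the coefficients to vanish. Your write-up merely unpacks these cases in more detail than the paper does.
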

\begin{proof}
It is easy to see that the problem \eqref{eq:EPperpfree}, \eqref{eq:BCperpfree} does not admit eigenvalues for $\Lambda\ge\mu$, i.e.~eigenvalues embedded in the continuous spectrum.
Furthermore, for $\Lambda<\mu$ a straightforward substitution shows that the only solution of \eqref{eq:EPperpfree}, \eqref{eq:BCperpfree} of the form \eqref{Dima 15 July label 2} is the trivial one. This concludes the proof.
\end{proof}

Combining \eqref{eq:detSfreeperp},  Lemma~\ref{lem:thresholdfree1}, and Lemma~\ref{lem:evfree1}, one obtains Lemma~\ref{lem:shiftfreeperp}.

\section{Second invariant subspace: reduction to the two-dimensional case}
\label{sec:secondInvSub}

In this section we will compute the spectral shift functions $\mathrm{shift}_{\aleph,\parallel}$, $\aleph\in\{\Dir, \free\}$,  for the $\mathcal{L}'_\parallel$.

Calculations in the second invariant subspace are trickier, in that, unlike $\mathcal{L}'_\perp$,  the operator $\mathcal{L}'_\parallel$ is not diagonal. However, our decomposition into invariant subspaces implies the following

\begin{fact}
\label{fact:reduction2D}
Let us denote by $\mathcal{L}_\plane$ the operator of linear elasticity for $d=2$.
Then the spectral shift function for the problem
\begin{equation*}
\mathcal{L}'_\parallel \mathbf{u}_\parallel=\Lambda \mathbf{u}_\parallel,
\end{equation*}
with Dirichlet/free boundary conditions
coincides with the spectral shift function for the operator $\mathcal{L}'_\plane$ with the same boundary conditions. Namely,
\begin{equation}
\label{eq:shiftequivpar2D}
\mathrm{shift}_{\aleph,\parallel}(\Lambda)=\mathrm{shift}_{\aleph,\plane}(\Lambda), \qquad \aleph\in\{\Dir, \free\}.
\end{equation}
\end{fact}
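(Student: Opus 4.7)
The plan is to exhibit an explicit unitary isomorphism between $V_\parallel$ (viewed as a space of $\mathbb{C}^d$-valued functions on $[0,+\infty)$) and the space $C^\infty\bigl([0,+\infty);\mathbb{C}^2\bigr)$ of vector fields entering the one-dimensional reduction of planar elasticity, and then observe that this isomorphism intertwines the reduced elasticity and traction operators with their two-dimensional counterparts. Since the spectral shift function is constructed from the scattering matrix and the one-dimensional counting function, which are by definition unitary invariants of the operator plus boundary condition, \eqref{eq:shiftequivpar2D} will follow.

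Concretely, every element of $V_\parallel$ has the unique representation
\begin{equation*}
\mathbf{u}_\parallel(z)=\frac{1}{\|\xi'\|}\begin{pmatrix}\xi'\\0\end{pmatrix}f_1(z)+\begin{pmatrix}0'\\1\end{pmatrix}f_2(z),
\end{equation*}
which defines a map $\Phi:\mathbf{u}_\parallel\mapsto (f_1,f_2)^T$. Using the computation carried out in the proof of Lemma~\ref{lem:invsub}, the action of $\mathcal{L}'$ on $\mathbf{u}_\parallel$ reads, in the $(f_1,f_2)$-representation,
\begin{equation*}
\Phi\,\mathcal{L}'\,\Phi^{-1}\begin{pmatrix}f_1\\f_2\end{pmatrix}=\begin{pmatrix}(\lambda+2\mu)\|\xi'\|^2 f_1-\mu f_1''-\ir(\lambda+\mu)\|\xi'\|f_2'\\ \mu\|\xi'\|^2 f_2-(\lambda+2\mu)f_2''-\ir(\lambda+\mu)\|\xi'\|f_1'\end{pmatrix},
\end{equation*}
which is precisely the one-dimensional operator $\mathcal{L}'_\plane$ obtained from \eqref{eq:L'} in dimension $d=2$ with tangential momentum $\|\xi'\|$. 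Similarly, the formula for $\mathcal{T}'\mathbf{u}_\parallel$ derived in the proof of Lemma~\ref{lem:invsub}(b) coincides, under $\Phi$, with the planar traction operator applied to $(f_1,f_2)^T$. Hence $\Phi$ intertwines both the differential operator and each of the admissible boundary operators with their planar analogues.

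Next I would carry this identification through every step of the algorithm of \S\ref{sec:algorithm} applied to $\mathcal{L}'_{\aleph,\parallel}$. The principal symbol of $\mathcal{L}'_\parallel$ acting on $P$ has the two simple eigenvalues $(\lambda+2\mu)\|\xi\|^2$ and $\mu\|\xi\|^2$, which coincide with the eigenvalues of the principal symbol of $\mathcal{L}'_\plane$; the thresholds, the sets of real roots $\zeta^{\pm}_{k,q}(\Lambda)$, and the orthonormal bases $\mathbf{w}^{\pm}_{k,q,j}(\Lambda)$ therefore correspond term by term under $\Phi$. As a consequence, the ansatz \eqref{eq:efcontspec_general} for generalised eigenfunctions in $V_\parallel$ maps to the corresponding ansatz in the planar case, and the scattering matrices satisfy $S^{(l)}_{\aleph,\parallel}(\Lambda)=S^{(l)}_{\aleph,\plane}(\Lambda)$. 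The phase-shift contributions from the thresholds, determined by \eqref{eq:jumpsSSF_gen} in terms of the rigid/soft character of each threshold, are invariant under $\Phi$ for the same reason; and the one-dimensional counting functions $N_{\aleph,\parallel}$ and $N_{\aleph,\plane}$ count the same eigenvalues via $\Phi$.

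The main thing to verify carefully, and the only mild obstacle, is the exact matching of the gauge-dependent constants $\mathfrak{s}^{(l)}$ in \eqref{eq:SpectralShiftFn}: since $\Phi$ is a rigid (i.e.\ $\Lambda$-independent) unitary between $V_\parallel$ and the planar vector fields, it produces equal $\mathfrak{s}^{(l)}$ in the two problems, so the phase shifts and hence the spectral shift functions agree identically. Combining these observations yields \eqref{eq:shiftequivpar2D} for both $\aleph=\Dir$ and $\aleph=\free$.
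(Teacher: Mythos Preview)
Your argument is correct and follows essentially the same idea as the paper: identify $V_\parallel$ with the space of two-component vector fields and check that $\mathcal{L}'$, $\mathcal{T}'$ and the Dirichlet trace are intertwined with their planar counterparts, so that every ingredient of the spectral shift function matches. The paper's justification is much shorter only because it invokes the additional simplification \eqref{eq:specialxi} (so $\|\xi'\|=1$ and $P=\operatorname{span}\{\mathbf{e}_{d-1},\mathbf{e}_d\}$), under which your isomorphism $\Phi$ becomes the trivial projection onto the last two coordinates and the intertwining is immediate; you instead carry out the identification for general $\xi'$, which is fine but unnecessary once that reduction has been made.
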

Fact \eqref{fact:reduction2D} can be easily established by observing that, under assumption \eqref{eq:specialxi},  elements in the domain of $\mathcal{L}'_\parallel$ are of the form
\begin{equation*}
\mathbf{u}_\parallel(z)=
\begin{pmatrix}
0\\
\vdots
\\
0
\\
f_1(z)
\\
f_2(z)
\end{pmatrix}\,.
\end{equation*}

In the remainder of this section we will compute the spectral shift function for the operator of linear elasticity in dimension two.

The principal symbol $\left(\mathcal{L}'_\plane\right)_\mathrm{prin}$ has two simple eigenvalues\footnote{In our notation, $m_1=m_2=1$.}
\begin{equation*}
h_1(\zeta)=\mu(1+\zeta^2), \qquad h_2(\zeta)=(\lambda+2\mu)(1+\zeta^2)\,.
\end{equation*}
These give us the two thresholds
\begin{equation*}
\Lambda_*^{(1)}=\mu, \qquad \Lambda_*^{(2)}=\lambda+2\mu
\end{equation*}
and the corresponding exponents
\begin{equation*}
\zeta_1^{\pm}(\Lambda)=\pm\sqrt{\frac{\Lambda}{\mu}-1}\,,\qquad \zeta_2^{\pm}(\Lambda)=\pm\sqrt{\frac{\Lambda}{\lambda+2\mu}-1}\,,
\end{equation*}
so that the continuous spectrum $[\mu,+\infty)$ is partitioned into the two intervals
\begin{equation*}
I^{(1)}=(\mu,\lambda+2\mu), \qquad I^{(2)}=(\lambda+2\mu,+\infty)
\end{equation*}
of multiplicities $p^{(1)}=1$ and $p^{(2)}=2$, respectively.

The normalised eigenvectors of $\left(\mathcal{L}'_\plane\right)_\mathrm{prin}$ are
\begin{equation*}
\mathbf{v}_1(\zeta)=\frac{1}{\sqrt{1+\zeta^2}}\begin{pmatrix}
1
\\
\zeta
\end{pmatrix}\,,
\qquad 
\mathbf{v}_2(\zeta)=\frac{1}{\sqrt{1+\zeta^2}}
\begin{pmatrix}
-\zeta
\\
1
\end{pmatrix}\,.
\end{equation*}
Hence, the eigenfunctions of the continuous spectrum for $\Lambda$ in $I^{(1)}$ and $I^{(2)}$ read
\begin{equation}
\label{eq:EFessspec2D1}
\mathbf{u}(z;\Lambda)=\sum_{@\in\{+,-\}}  \frac{1}{\sqrt{@4\pi\mu \,\zeta_1^{@}(\Lambda)}}c_1^@\,\mathbf{v}_1(\zeta_1^{@}(\Lambda))\er^{\ir\zeta_1^{@}(\Lambda)\,z}+ c \,\mathbf{v}_2\left(\ir\sqrt{1-\frac {\Lambda}{\lambda+2\mu}}\right)\er^{-\sqrt{1-\frac {\Lambda}{\lambda+2\mu}}\,z}
\end{equation}
and
\begin{equation}\label{eq:EFessspec2D2}
\begin{split}
\mathbf{u}(z;\Lambda)=\sum_{@\in\{+,-\}}  &\left(
\frac{1}{\sqrt{4\pi\mu \,|\zeta_1^{@}(\Lambda)}|}c_1^@\,\mathbf{v}_1(\zeta_1^{@}(\Lambda))\er^{\ir\zeta_1^{@}(\Lambda)\,z} \right.\\
&\ +
\left.\frac{1}{\sqrt{4\pi(\lambda+2\mu) \,|\zeta_2^{@}(\Lambda)}|}c_2^@\,\mathbf{v}_2(\zeta_2^{@}(\Lambda))\er^{\ir\zeta_2^{@}(\Lambda)\,z}
\right),
\end{split}
\end{equation}
respectively.

\subsection{Dirichlet boundary conditions}
\label{subsec:secondInvSubDir}
Consider the spectral problem
\begin{equation}
\label{eq:EP2DDir}
\mathcal{L}_\plane' \mathbf{u}=\begin{pmatrix}
\lambda+2\mu- \mu\frac{\dr^2}{\dr z^2} & -\ir(\lambda+\mu) \frac{\dr}{\dr z}\\
-\ir(\lambda+\mu) \frac{\dr}{\dr z} & \mu -(\lambda+2\mu)\frac{\dr^2}{\dr z^2}\\
\end{pmatrix}
\begin{pmatrix}
f_1(z)\\
f_2(z)
\end{pmatrix}
=
\Lambda 
\begin{pmatrix}
f_1(z)\\
f_2(z)
\end{pmatrix},
\end{equation}
\begin{equation}
\label{eq:BC2DDir}
\begin{pmatrix}
f_1(0)\\
f_2(0)
\end{pmatrix}=\begin{pmatrix}
0
\\
0
\end{pmatrix}\,.
\end{equation}
The goal of this subsection is to prove the following result.
\begin{lem}
\label{lem:shiftDir2D}
We have
\begin{equation*}
\varphi_{\Dir,\plane}(\Lambda)
=
\begin{cases}
0 &\text{for}\ \Lambda<\mu,\\
-2\arctan\left(\sqrt{\left(1-\frac{\Lambda}{\lambda+2\mu}\right)\left(\frac{\Lambda}{\mu}-1\right)} \right)-\frac\pi2&\text{for}\ \mu<\Lambda< \lambda+2 \mu,\\
-\pi&\text{for}\ \Lambda> \lambda+2 \mu
\end{cases}
\end{equation*}
and
\begin{equation*}
N_{\Dir,\plane}(\Lambda)=0,
\end{equation*}
so that
\begin{equation*}
\mathrm{shift}_{\Dir,\plane}(\Lambda)
=
\begin{cases}
0 &\text{for}\ \Lambda<\mu,\\
-\frac{1}{\pi}\arctan\left(\sqrt{\left(1-\frac{\Lambda}{\lambda+2\mu}\right)\left(\frac{\Lambda}{\mu}-1\right)} \right)-\frac14&\text{for}\ \mu<\Lambda< \lambda+2 \mu,\\
-\frac12&\text{for}\ \Lambda> \lambda+2 \mu.
\end{cases}
\end{equation*}
\end{lem}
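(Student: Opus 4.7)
My plan is to implement Steps 4--6 of the algorithm from \S\ref{sec:algorithm} for the two-dimensional Dirichlet problem \eqref{eq:EP2DDir}, \eqref{eq:BC2DDir} in each of the two continuous-spectrum intervals $I^{(1)}=(\mu,\lambda+2\mu)$ and $I^{(2)}=(\lambda+2\mu,+\infty)$ separately, stitch the pieces together using the threshold jump formula \eqref{eq:jumpsSSF_gen}, and then rule out any point spectrum.

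In $I^{(1)}$ I would substitute the generalised eigenfunction \eqref{eq:EFessspec2D1} into $\mathbf{u}(0)=0$ and eliminate the amplitude $c$ of the decaying mode from the two resulting scalar equations; writing $k_1:=\sqrt{\Lambda/\mu-1}$ and $\kappa_2:=\sqrt{1-\Lambda/(\lambda+2\mu)}$, the explicit form of $\mathbf{v}_1(\pm k_1)$ and $\mathbf{v}_2(\ir\kappa_2)$ reduces this to the single relation $c_1^+=-\frac{1-\ir k_1\kappa_2}{1+\ir k_1\kappa_2}\,c_1^-$, so the $1\times 1$ scattering matrix is $S^{(1)}_{\Dir,\plane}(\Lambda)=-\er^{-2\ir\arctan(k_1\kappa_2)}$ and the natural continuous branch yields $\arg\det S^{(1)}=-\pi-2\arctan\bigl(\sqrt{(1-\Lambda/(\lambda+2\mu))(\Lambda/\mu-1)}\bigr)$ on $I^{(1)}$. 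In $I^{(2)}$ the analogous calculation, now with four oscillating amplitudes $c_1^\pm,c_2^\pm$ and two Dirichlet equations, gives a $2\times 2$ unitary matrix whose off-diagonal entries carry the ratios $\sqrt{\mu k_1/(\lambda+2\mu)k_2}$ from the $\Lambda$-dependent normalisations in \eqref{eq:EFessspec2D2}; a direct expansion yields $\det S^{(2)}=\bigl[(k_1k_2-1)^2+4k_1k_2\bigr]/(1+k_1k_2)^2=1$, so $\arg\det S^{(2)}\equiv 0$ and $\varphi_{\Dir,\plane}$ is constant on $I^{(2)}$. The orthonormal eigenvectors $\mathbf{v}_1,\mathbf{v}_2$ can be chosen real, so the gauge condition $\langle\mathbf{w},\dr\mathbf{w}/\dr\Lambda\rangle=0$ from Step~3 is automatic.

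The constants $\mathfrak{s}^{(1)},\mathfrak{s}^{(2)}$ in \eqref{eq:SpectralShiftFn} are then fixed via \eqref{eq:jumpsSSF_gen}. At $\Lambda_*^{(1)}=\mu$ the only real root of $h_1(\zeta)=\mu$ is the double root $\zeta=0$, with $m_{k^{(1)}}=1$; plugging the ansatz \eqref{eq:oscillSolThre}, namely a constant multiple of $\mathbf{v}_1(0)=(1,0)^T$ plus an exponentially decaying tail from the $\mathbf{v}_2(\ir\kappa)$ branch, into the Dirichlet boundary condition forces the trivial solution, so $j_*^{(1)}=0$, the threshold is rigid, and it contributes a jump of $-\pi/2$. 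An identical analysis at $\Lambda_*^{(2)}=\lambda+2\mu$, using the double root of $h_2$ and the fact that the Dirichlet condition again admits no nontrivial ansatz \eqref{eq:oscillSolThre}, shows the second threshold is also rigid with jump $-\pi/2$. Combined with the normalisation $\varphi_{\Dir,\plane}\equiv 0$ on $(-\infty,\mu]$, these two jumps reproduce the piecewise expression in the lemma.

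Finally I would verify $N_{\Dir,\plane}\equiv 0$: for $\Lambda<\mu$ any $L^2$ solution of \eqref{eq:EP2DDir} is a combination of the exponentials $\er^{-\sqrt{1-\Lambda/\mu}\,z}\mathbf{v}_1(\ir\sqrt{1-\Lambda/\mu})$ and $\er^{-\sqrt{1-\Lambda/(\lambda+2\mu)}\,z}\mathbf{v}_2(\ir\sqrt{1-\Lambda/(\lambda+2\mu)})$, and imposing \eqref{eq:BC2DDir} yields a $2\times 2$ homogeneous linear system whose determinant is easily checked to be non-zero; eigenvalues embedded in the continuous spectrum are ruled out as in Lemma~\ref{lem:evDir1}. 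The main obstacle will be bookkeeping: in $I^{(2)}$ one must carefully track the $\Lambda$-dependent normalisations of \eqref{eq:EFessspec2D2} in order to assemble a genuinely unitary $S^{(2)}$, and in $I^{(1)}$ one must pin down the branch of $\arg\det S^{(1)}$ continuous on the open interval, since the wrong branch would shift $\mathfrak{s}^{(1)}$ by $\pm 2\pi$ and, via the cascade of threshold jumps, corrupt $\mathfrak{s}^{(2)}$ as well.
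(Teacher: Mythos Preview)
Your proposal is correct and follows essentially the same approach as the paper: compute the scattering matrices $S^{(1)}$ and $S^{(2)}$ from the Dirichlet conditions applied to \eqref{eq:EFessspec2D1}--\eqref{eq:EFessspec2D2}, verify via the ansatz \eqref{eq:oscillSolThre} that both thresholds are rigid, and rule out point spectrum by checking that the $2\times2$ system for decaying modes below $\mu$ has nonvanishing determinant and that no square-integrable solutions exist in $I^{(1)}$ or $I^{(2)}$. The only cosmetic differences are your choice of branch $\arg\det S^{(1)}=-\pi-2\arctan(\cdots)$ versus the paper's $-2\arctan(\cdots)$ (which is absorbed into $\mathfrak{s}^{(1)}$, as you note), and that the paper records the full $2\times2$ matrix $S^{(2)}$ rather than just its determinant.
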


We shall prove Lemma~\ref{lem:shiftDir2D} in several steps.

Substituting \eqref{eq:EFessspec2D1} and \eqref{eq:EFessspec2D2} into \eqref{eq:BC2DDir} we get
\begin{equation*}
S_{\Dir,\plane}(\Lambda)
=
\begin{cases}
\frac{\sqrt{(1-\frac{\Lambda}{\lambda+2\mu})(\frac{\Lambda}{\mu}-1)}+\ir}{\sqrt{(1-\frac{\Lambda}{\lambda+2\mu})(\frac{\Lambda}{\mu}-1)}-\ir} & \text{for}\ \Lambda\in I^{(1)},\\[1.5em]
\begin{pmatrix}
\frac{\sigma^2-1}{\sigma^2+1} & -\frac{2\sigma}{\sigma^2+1} \\
\frac{2\sigma}{\sigma^2+1} & \frac{\sigma^2-1}{\sigma^2+1}
\end{pmatrix} &  \text{for}\ \Lambda\in I^{(2)},
\end{cases}
\end{equation*}
where $\sigma:=\left(\frac{\Lambda}{\mu}-1\right)^{1/4}\left(\frac{\Lambda}{\lambda+2\mu}-1\right)^{1/4}$. The above equation implies
\begin{equation}
\label{eq:argdet2DDir}
\arg \det S_{\Dir,\plane}(\Lambda)
=
\begin{cases}
-2\arctan\left(\sqrt{\left(1-\frac{\Lambda}{\lambda+2\mu}\right)\left(\frac{\Lambda}{\mu}-1\right)} \right)& \text{for}\ \Lambda\in I^{(1)},\\
0 &  \text{for}\ \Lambda\in I^{(2)}.
\end{cases}
\end{equation}

\begin{lem}
\label{lem:thresholds2Ddir}
The thresholds $\Lambda_*^{(1)}$ and $\Lambda_*^{(2)}$ for the problem \eqref{eq:EP2DDir}, \eqref{eq:BC2DDir} are rigid.
\end{lem}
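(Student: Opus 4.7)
My plan is to show directly that at each threshold the only vector $\mathbf{v}$ for which a solution of the form
\[
\mathbf{u}(z) = \mathbf{v}\,\er^{\ir \zeta_*^{(l)} z} + \mathbf{f}(z), \qquad \mathbf{f}(z) = o(1) \text{ as } z\to+\infty,
\]
of the Dirichlet one-dimensional problem \eqref{eq:EP2DDir}--\eqref{eq:BC2DDir} exists is $\mathbf{v} = \mathbf{0}$, which is precisely what $j_*^{(l)} = 0$ requires. Since $h_1(\zeta)-\Lambda_*^{(1)} = \mu\zeta^2$ and $h_2(\zeta)-\Lambda_*^{(2)} = (\lambda+2\mu)\zeta^2$, the unique multiple real root of $h_k(\zeta) = \Lambda_*^{(k)}$ is $\zeta_*^{(l)} = 0$ in both cases, and the ansatz collapses to $\mathbf{u}(z) = \mathbf{v} + \mathbf{f}(z)$ with $\mathbf{v}$ constant.

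For each threshold I would enumerate the four-dimensional kernel of $\mathcal{L}'_\plane - \Lambda_*^{(l)} I$ and retain only the elements compatible with the required asymptotics, then impose $\mathbf{u}(0)=0$. At $\Lambda_*^{(1)} = \mu$, the other factor of the characteristic polynomial is $h_2(\zeta)-\mu = (\lambda+2\mu)\zeta^2 + (\lambda+\mu)$, with purely imaginary roots $\pm\ir\kappa$, $\kappa := \sqrt{(\lambda+\mu)/(\lambda+2\mu)}$. A direct substitution shows that the only constant vector in the kernel is $c\,(0,1)^T$ (the direction $(1,0)^T$ fails since $\mathcal{L}'_\plane(1,0)^T = (\lambda+2\mu)(1,0)^T \ne \mu(1,0)^T$), while the Jordan-type $z$-linear companion of the double root at $\zeta=0$ grows and is discarded. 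From the imaginary roots one keeps only the decaying exponential $d\,(1,\ir\kappa)^T\,\er^{-\kappa z}$. The Dirichlet condition applied to $c\,(0,1)^T + d\,(1,\ir\kappa)^T$ yields $d=0$ and $c+\ir\kappa d = 0$, so $c=d=0$ and $\mathbf{v}=\mathbf{0}$.

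At $\Lambda_*^{(2)} = \lambda+2\mu$ the other factor is $h_1(\zeta)-(\lambda+2\mu) = \mu\zeta^2 - (\lambda+\mu)$, with real roots $\pm\sqrt{(\lambda+\mu)/\mu}$, so the associated solutions oscillate and must be discarded since they cannot contribute to an $o(1)$ tail. The double root $\zeta=0$ yields the constant $c\,(1,0)^T$ (now $(0,1)^T$ is the direction ruled out by the ODE) together with an unbounded $z$-linear companion, again discarded. The Dirichlet condition forces $c=0$, so $\mathbf{v}=\mathbf{0}$.

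The argument is essentially careful bookkeeping: one must verify that the $z$-linear Jordan companions at the double root are correctly discarded as unbounded, and that the oscillating solutions at $\Lambda_*^{(2)}$ cannot be absorbed into an $o(1)$ remainder. Both facts are immediate once the explicit basis of the kernel is written down, and the condition $\lambda + \mu > 0$ implied by \eqref{eq:mulambda} in dimension two guarantees the reality and positivity of $\kappa$ and of $\sqrt{(\lambda+\mu)/\mu}$, cleanly separating the decaying from the oscillating regime across the two thresholds.
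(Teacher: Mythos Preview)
Your proof is correct and follows essentially the same approach as the paper: at each threshold you write down the general solution of the required form \eqref{eq:oscillSolThre} (a constant vector plus a decaying remainder), discard the unbounded Jordan companions and, at $\Lambda_*^{(2)}$, the purely oscillatory modes, and then observe that the Dirichlet condition forces the remaining coefficients to vanish. The paper's version is terser (it simply states the admissible general solutions \eqref{eq:oscsol1} and \eqref{eq:oscsol2} without the kernel bookkeeping), but the substance is identical.
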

\begin{proof}
Let us first consider $\Lambda_*^{(1)}$.  The general solution to \eqref{eq:EP2DDir} of the form \eqref{eq:oscillSolThre} reads
\begin{equation}
\label{eq:oscsol1}
c_1
\begin{pmatrix}
1\\
\ir\sqrt{\frac{\lambda+\mu}{\lambda+2\mu}}
\end{pmatrix}
\er^{-\sqrt{\frac{\lambda+\mu}{\lambda+2\mu}}\,z}
+
c_2
\begin{pmatrix}
0
\\
1
\end{pmatrix}.
\end{equation}
Substituting the above expression into \eqref{eq:BC2DDir} gives us $c_1=c_2=0$. Hence, $\Lambda_*^{(1)}$ is rigid.

Let us now examine $\Lambda_*^{(2)}$. The general solution to \eqref{eq:EP2DDir} of the form \eqref{eq:oscillSolThre} reads
\begin{equation}
\label{eq:oscsol2}
c
\begin{pmatrix}
1
\\
0
\end{pmatrix}.
\end{equation}
The latter can only satisfy the Dirichlet boundary conditions if $c=0$, which implies that $\Lambda_*^{(2)}$ is rigid as well.
\end{proof}

\begin{lem}
\label{lem:EV2Ddir}
The problem \eqref{eq:EP2DDir}, \eqref{eq:BC2DDir} does not have eigenvalues, either below or embedded in the continuous spectrum.
\end{lem}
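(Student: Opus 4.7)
My plan is to classify all $L^2$ solutions of the ODE \eqref{eq:EP2DDir} on $[0,+\infty)$ and show that the Dirichlet condition \eqref{eq:BC2DDir} forces them to vanish in each relevant range of $\Lambda$. Seeking $(f_1,f_2)^T=\mathbf{v}\,\er^{\zeta z}$, the vanishing of the determinant of the resulting symbol matrix factorises as
\[
\bigl(\zeta^2-(1-\Lambda/\mu)\bigr)\bigl(\zeta^2-(1-\Lambda/(\lambda+2\mu))\bigr)=0,
\]
yielding a transverse pair $\pm\zeta_T$ with $\zeta_T^2=1-\Lambda/\mu$ and a longitudinal pair $\pm\zeta_L$ with $\zeta_L^2=1-\Lambda/(\lambda+2\mu)$, whose decaying-mode null vectors may be taken as $\mathbf{v}_T=(\zeta_T,\ir)^T$ and $\mathbf{v}_L=(1,\ir\zeta_L)^T$. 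Eigenvalues $\Lambda\le 0$ are excluded at once by strict positivity of the fibre quadratic form $(\mathcal{L}'_\plane\mathbf{u},\mathbf{u})_{L^2(\mathbb{R}_+)}$: expanding via Green's formula \eqref{eq:green}, it contains the manifestly positive contribution $2\mu\int_0^\infty|f_1|^2\,\dr z$ arising from the $\xi'=1$ Fourier mode, together with $(\lambda+2\mu)\int_0^\infty|f_2'|^2\,\dr z$ and $\mu\int_0^\infty|\ir f_2+f_1'|^2\,\dr z$ terms that jointly enforce $\mathbf{u}\equiv 0$ once the form vanishes.

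For $0<\Lambda<\mu$, both $\zeta_L,\zeta_T$ are real and positive and the $L^2$ solution space is two-dimensional, spanned by $\mathbf{v}_L\er^{-\zeta_L z}$ and $\mathbf{v}_T\er^{-\zeta_T z}$. The Dirichlet condition reduces to a $2\times 2$ homogeneous system with matrix $(\mathbf{v}_L\mid\mathbf{v}_T)$ and determinant proportional to $1-\zeta_L\zeta_T$; this vanishes iff $(1-\Lambda/\mu)(1-\Lambda/(\lambda+2\mu))=1$, which rearranges to $\Lambda\in\{0,\lambda+3\mu\}$, and neither root lies in $(0,\mu)$ since $\lambda+3\mu>\lambda+2\mu>\mu$. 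For $\mu<\Lambda<\lambda+2\mu$ only the longitudinal mode yields an $L^2$ solution, and since the first component of $\mathbf{v}_L$ equals $1\ne 0$ the Dirichlet condition is inconsistent. For $\Lambda>\lambda+2\mu$ both characteristic exponents are purely imaginary and no $L^2$ solutions exist at all. The two thresholds $\Lambda\in\{\mu,\lambda+2\mu\}$ are already covered by Lemma~\ref{lem:thresholds2Ddir}.

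The only delicate point is the confluent case $\Lambda=0$, where $\zeta_L=\zeta_T=1$ and $\mathbf{v}_L=\mathbf{v}_T$: the general $L^2$ solution then includes a Jordan-type generalised mode $(\mathbf{w}+z\mathbf{v})\er^{-z}$, and one must verify that the resulting Dirichlet system remains non-degenerate. This is the main technical issue and is bypassed cleanly by the positivity argument above; alternatively it can be handled by a direct computation using the condition $\lambda+\mu>0$ implied by \eqref{eq:mulambda}.
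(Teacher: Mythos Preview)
Your proof is correct and follows essentially the same approach as the paper's: classify the decaying exponential solutions in each $\Lambda$-range and check that the Dirichlet condition kills them. The paper's version is slightly terser---it simply notes that $\chi(\Lambda)=1-\sqrt{(1-\Lambda/(\lambda+2\mu))(1-\Lambda/\mu)}$ cannot vanish on $(0,\mu)$ because both square-root factors lie in $(0,1)$, whereas you go further and solve the squared equation to locate the spurious roots at $0$ and $\lambda+3\mu$; and the paper leaves $\Lambda\le 0$ implicit (positivity of $\mathcal{L}'_\mathrm{plane}$ under Dirichlet conditions being understood), whereas you spell out the form argument. One small imprecision: Lemma~\ref{lem:thresholds2Ddir} establishes \emph{rigidity} of the thresholds, not directly that they are not eigenvalues; the paper phrases this as ``arguing as in the proof of Lemma~\ref{lem:thresholds2Ddir}'', which is the accurate way to invoke it.
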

\begin{proof}
Arguing as in the proof of Lemma~\ref{lem:thresholds2Ddir}, it is easy to see that thresholds are not eigenvalues.

For $\Lambda\in(0,\mu)$ we seek an eigenfunction of \eqref{eq:EP2DDir} in the form
\begin{equation}
\label{eq:proofEV2DdirEq1}
\mathbf{u}(z;\Lambda)=c_1\, \mathbf{v}_1\left(\ir\sqrt{1-\frac {\Lambda}{\mu}}\right)\er^{-\sqrt{1-\frac {\Lambda}{\mu}}\,z}
+
 c_2 \,\mathbf{v}_2\left(\ir\sqrt{1-\frac {\Lambda}{\lambda+2\mu}}\right)\er^{-\sqrt{1-\frac {\Lambda}{\lambda+2\mu}}\,z}\,.
\end{equation}
Substituting \eqref{eq:proofEV2DdirEq1} into \eqref{eq:BC2DDir} we get
\begin{equation*}
\begin{pmatrix}
1 & -\ir \sqrt{1-\frac {\Lambda}{\lambda+2\mu}}\\
\ir \sqrt{1-\frac {\Lambda}{\mu}} & 1
\end{pmatrix}
\begin{pmatrix}
c_1
\\
c_2
\end{pmatrix}
=
\begin{pmatrix}
0
\\
0
\end{pmatrix},
\end{equation*}
so that the characteristic equation in $(0,\mu)$ reads
\begin{equation*}
\chi(\Lambda)=1-\sqrt{1-\frac {\Lambda}{\lambda+2\mu}}\sqrt{1-\frac {\Lambda}{\mu}}=0.
\end{equation*}
The latter has no solutions in $(0,\mu)$.

For $\Lambda\in I^{(1)}$ we seek an eigenfunction in the form \eqref{eq:EFessspec2D1} with $c_1^\pm=0$, so that the solution is square-integrable:
\begin{equation}
\label{eq:proofEV2DdirEq4}
\mathbf{u}(z;\Lambda)=c \,\mathbf{v}_2\left(\ir\sqrt{1-\frac {\Lambda}{\lambda+2\mu}}\right)\er^{-\sqrt{1-\frac {\Lambda}{\lambda+2\mu}}\,z}.
\end{equation}
The latter satisfies \eqref{eq:BC2DDir} if and only if $c=0$, which means there are no eigenfunctions for $\Lambda\in I^{(1)}$, 

By looking at \eqref{eq:EFessspec2D2},  it is easy to see that there are no (square-integrable) eigenfunctions corresponding to $\Lambda\in I^{(2)}$, which completes the proof.
\end{proof}

Combining \eqref{eq:argdet2DDir}, Lemma~\ref{lem:thresholds2Ddir}, and Lemma~\ref{lem:EV2Ddir}, one obtains Lemma~\ref{lem:shiftDir2D}.

\subsection{Free boundary conditions}
\label{subsec:secondInvSubfree}

Consider the spectral problem
\begin{equation}
\label{eq:EP2Dfree}
\mathcal{L}_\plane' \mathbf{u}=\begin{pmatrix}
\lambda+2\mu- \mu\frac{\dr^2}{\dr z^2} & -\ir(\lambda+\mu) \frac{\dr}{\dr z}\\
-\ir(\lambda+\mu) \frac{\dr}{\dr z} & \mu -(\lambda+2\mu)\frac{\dr^2}{\dr z^2}\\
\end{pmatrix}
\begin{pmatrix}
f_1(z)\\
f_2(z)
\end{pmatrix}
=
\Lambda 
\begin{pmatrix}
f_1(z)\\
f_2(z)
\end{pmatrix},
\end{equation}
\begin{equation}
\label{eq:BC2Dfree}
-\left.\left(\mathcal{T}'\mathbf{u}\right)\right|_{z=0}=\begin{pmatrix}
\mu (f_1'(0)+\ir f_2(0))
\\
\ir\lambda
f_1(0)+(\lambda+2\mu)f_2'(0)
\end{pmatrix}
=
\begin{pmatrix}
0
\\
0
\end{pmatrix}\,.
\end{equation}
The goal of this subsection is to prove the following result.

\begin{lem}
\label{lem:shiftfree2D}
We have
\begin{equation*}
\varphi_{\free,\plane}(\Lambda)
=
\begin{cases}
0 &\text{for}\ \Lambda< \mu,\\
2
\arctan
\left(
\frac
{
\left(\frac{\Lambda}{\mu}-2\right)^2
}
{
4\,\sqrt{\left(1-\frac{\Lambda}{\lambda+2\mu}\right)\left(\frac{\Lambda}{\mu}-1\right)}
}
\right)-\frac{3\pi}2&\text{for}\ \mu<\Lambda< \lambda+2 \mu,\\
-\pi&\text{for}\ \Lambda> \lambda+2 \mu
\end{cases}
\end{equation*}
and
\begin{equation*}
N_{\free,\plane}(\Lambda)=\chi_{(\gamma_R^2\mu,+\infty)}(\Lambda),
\end{equation*}
where
$\gamma_R$ is given by \eqref{eq:gammaR},
so that
\begin{equation*}
\mathrm{shift}_{\free,\plane}(\Lambda)
=
\begin{cases}
0 &\text{for}\ \Lambda< \mu\gamma_R^2,\\
1 &\text{for}\ \mu\gamma_R^2<\Lambda< \mu,\\
\frac{1}{\pi}
\arctan
\left(
\frac
{
\left(\frac{\Lambda}{\mu}-2\right)^2
}
{
4\,\sqrt{\left(1-\frac{\Lambda}{\lambda+2\mu}\right)\left(\frac{\Lambda}{\mu}-1\right)}
}
\right)+\frac{1}{4}&\text{for}\ \mu<\Lambda< \lambda+2 \mu,\\
\frac12&\text{for}\ \Lambda> \lambda+2 \mu.
\end{cases}
\end{equation*}
\end{lem}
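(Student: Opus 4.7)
The plan is to run exactly the same machinery of Steps 3--6 from the algorithm in \S\ref{sec:algorithm} that produced Lemma~\ref{lem:shiftDir2D}, with the Dirichlet condition \eqref{eq:BC2DDir} replaced by the free boundary condition \eqref{eq:BC2Dfree}. Concretely, I would organise the work in the analogues of the three lemmas used in \S\ref{subsec:secondInvSubDir}: a direct computation of $\arg\det S_{\free,\plane}$ on each interval of the continuous spectrum; a lemma on the nature of the two thresholds $\Lambda_*^{(1)}=\mu$ and $\Lambda_*^{(2)}=\lambda+2\mu$; and a lemma locating the point spectrum, which is where the Rayleigh wave will appear.

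\textbf{Scattering matrix.} Substitute the generalised eigenfunction \eqref{eq:EFessspec2D1} into \eqref{eq:BC2Dfree} to obtain the $1\times 1$ scattering matrix $S_{\free,\plane}^{(1)}(\Lambda)$ on $I^{(1)}=(\mu,\lambda+2\mu)$, and \eqref{eq:EFessspec2D2} into \eqref{eq:BC2Dfree} to obtain the $2\times 2$ scattering matrix $S_{\free,\plane}^{(2)}(\Lambda)$ on $I^{(2)}=(\lambda+2\mu,+\infty)$. Using the explicit forms of $\mathbf{v}_1,\mathbf{v}_2$ from \S\ref{sec:secondInvSub} together with the identity $(\Lambda/\mu-2)^2-4(1-\Lambda/(\lambda+2\mu))(\Lambda/\mu-1)=R_\alpha(\Lambda/\mu)/(\Lambda/\mu)$ (with $R_\alpha$ as in \eqref{eq:R(w)}), the algebra should yield, on $I^{(1)}$, a unimodular factor whose argument is $2\arctan\bigl((\Lambda/\mu-2)^2/(4\sqrt{(1-\Lambda/(\lambda+2\mu))(\Lambda/\mu-1)})\bigr)$ modulo $2\pi$, and, on $I^{(2)}$, $\det S_{\free,\plane}^{(2)}(\Lambda)=1$ (hence $\arg\det=0$).

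\textbf{Thresholds.} At $\Lambda_*^{(1)}=\mu$, the general solution of \eqref{eq:EP2Dfree} of the form \eqref{eq:oscillSolThre} is \eqref{eq:oscsol1}; substituting into \eqref{eq:BC2Dfree} I expect both $c_1$ and $c_2$ to be forced to zero, so $j_*^{(1)}=0$, making $\Lambda_*^{(1)}$ rigid. At $\Lambda_*^{(2)}=\lambda+2\mu$, the analogous general solution is \eqref{eq:oscsol2}; here the free boundary conditions should be automatically satisfied for every $c$, giving $j_*^{(2)}=m_2=1$ so that $\Lambda_*^{(2)}$ is soft. Together with the jump formula \eqref{eq:jumpsSSF_gen} this fixes the two constants $\mathfrak{s}^{(l)}$, and matching the required continuous determination of $\arg\det S_{\free,\plane}^{(l)}$ yields the additive $-3\pi/2$ on $I^{(1)}$ and $-\pi$ on $I^{(2)}$ claimed in the statement.

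\textbf{Point spectrum and the Rayleigh wave.} For $\Lambda\in(0,\mu)$ seek a decaying solution
\begin{equation*}
\mathbf{u}(z;\Lambda)=c_1\,\mathbf{v}_1\bigl(\ir\sqrt{1-\Lambda/\mu}\bigr)\er^{-\sqrt{1-\Lambda/\mu}\,z}
+c_2\,\mathbf{v}_2\bigl(\ir\sqrt{1-\Lambda/(\lambda+2\mu)}\bigr)\er^{-\sqrt{1-\Lambda/(\lambda+2\mu)}\,z}.
\end{equation*}
Substitution into \eqref{eq:BC2Dfree} produces a homogeneous $2\times 2$ system in $(c_1,c_2)$; after clearing denominators and setting $w=\Lambda/\mu$, the vanishing of its determinant should reduce exactly to $\tilde R_\alpha(w)=0$ in the form \eqref{eq:RayleighEquiv}, whose unique root in $(0,1)$ is $w=\gamma_R^2$. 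Hence there is exactly one eigenvalue $\Lambda=\mu\gamma_R^2$, giving $N_{\free,\plane}(\Lambda)=\chi_{(\mu\gamma_R^2,+\infty)}(\Lambda)$; embedded eigenvalues are ruled out as in Lemma~\ref{lem:EV2Ddir} by inspection of \eqref{eq:EFessspec2D1}--\eqref{eq:EFessspec2D2}.

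\textbf{Expected main obstacle.} The computational part (scattering matrices and eigenvalue equation) is tractable but tedious; the genuine care is in the bookkeeping of the phase shift. One must choose a continuous branch of $\arg\det S_{\free,\plane}^{(l)}$ on each $I^{(l)}$, determine the constants $\mathfrak{s}^{(l)}$ from \eqref{eq:jumpsSSF_gen} (with opposite signs for the rigid threshold at $\mu$ and the soft threshold at $\lambda+2\mu$), and verify that the point eigenvalue at $\mu\gamma_R^2$ is consistent with the eventual value of $\mathrm{shift}_{\free,\plane}$ reported in the statement. Recognising that the determinantal condition from the free boundary system is \emph{precisely} Rayleigh's equation \eqref{eq:RayleighEquiv}, rather than some algebraically equivalent but opaque polynomial, is the key identification that brings $\gamma_R$ into the answer.
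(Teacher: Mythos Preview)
Your overall plan mirrors the paper's proof exactly: compute $\arg\det S_{\free,\plane}$ on $I^{(1)}$ and $I^{(2)}$, analyse the two thresholds, locate the Rayleigh eigenvalue, and assemble. The scattering computation and the Rayleigh-wave lemma are fine as described.

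However, your threshold analysis at $\Lambda_*^{(2)}=\lambda+2\mu$ contains a genuine error. Substituting the constant solution \eqref{eq:oscsol2}, i.e.\ $f_1(z)=c$, $f_2(z)=0$, into \eqref{eq:BC2Dfree} gives
\[
\begin{pmatrix}
\mu(f_1'(0)+\ir f_2(0))\\
\ir\lambda f_1(0)+(\lambda+2\mu)f_2'(0)
\end{pmatrix}
=
\begin{pmatrix}
0\\ \ir\lambda c
\end{pmatrix},
\]
so the free boundary condition is \emph{not} automatically satisfied: it forces $c=0$ unless $\lambda=0$. Hence $\Lambda_*^{(2)}$ is rigid ($j_*^{(2)}=0$) for $\lambda\ne 0$ and soft ($j_*^{(2)}=1$) only for $\lambda=0$; this is precisely the content of the paper's Lemma~\ref{lem:thresholds2Dfree}.

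This matters for your bookkeeping. If you use $j_*^{(2)}=1$ in \eqref{eq:jumpsSSF_gen} while $\lambda\ne 0$, the required jump at $\Lambda_*^{(2)}$ becomes $+\pi/2$ instead of $-\pi/2$, and you will obtain $\mathfrak{s}^{(2)}=0$ rather than $\mathfrak{s}^{(2)}=-\pi$; the resulting $\varphi$ on $I^{(2)}$ would be $0$, giving $\mathrm{shift}=1$ there instead of the correct $1/2$. The resolution, as the paper notes after Lemma~\ref{lem:EV2Dfree}, is that the two cases are exactly compensated by the limit of $2\arctan(\cdots)$ as $\Lambda\to(\lambda+2\mu)^-$: that limit is $\pi$ when $\lambda\ne 0$ (numerator $(\Lambda/\mu-2)^2\to(\lambda/\mu)^2\ne 0$, denominator $\to 0$) but $0$ when $\lambda=0$ (both vanish, and the ratio tends to $0$). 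Once you correct the threshold classification and track this dichotomy, the constants $-3\pi/2$ and $-\pi$ fall out as stated.
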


We shall prove Lemma~\ref{lem:shiftfree2D} in several steps.

Substituting \eqref{eq:EFessspec2D1} and \eqref{eq:EFessspec2D2} into \eqref{eq:BC2Dfree} we get
\begin{equation}
\label{eq:argdet2Dfree}
\arg \det S_{\free,\plane}(\Lambda)
=
\begin{cases}
2\arctan
\left(
\frac
{
\left(\frac{\Lambda}{\mu}-2\right)^2
}
{
4\,\sqrt{\left(1-\frac{\Lambda}{\lambda+2\mu}\right)\left(\frac{\Lambda}{\mu}-1\right)}
}
\right) & \text{for}\ \Lambda\in I^{(1)},\\
0 &  \text{for}\ \Lambda\in I^{(2)}.
\end{cases}
\end{equation}

We now have
\begin{lem}
\label{lem:thresholds2Dfree}
\begin{enumerate}\ 
\item[(a)]
The threshold $\Lambda_*^{(1)}$ for the problem \eqref{eq:EP2Dfree}, \eqref{eq:BC2Dfree} is rigid.  That is,
\begin{equation*}
j_*^{(1)}=0.
\end{equation*}
\item[(b)]
The threshold $\Lambda_*^{(2)}$ for the problem \eqref{eq:EP2Dfree}, \eqref{eq:BC2Dfree} is soft if $\lambda=0$ and rigid otherwise.  That is,
\begin{equation*}
j_*^{(2)}=
\begin{cases}
0 & \text{for }\lambda\ne 0,\\
1 & \text{for }\lambda=0.
\end{cases}
\end{equation*}
\end{enumerate}
\end{lem}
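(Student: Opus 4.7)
The plan is to reuse the explicit catalogue of solutions to \eqref{eq:EP2Dfree} of the type \eqref{eq:oscillSolThre} already produced in the proof of Lemma~\ref{lem:thresholds2Ddir}, namely \eqref{eq:oscsol1} at the threshold $\Lambda_*^{(1)}=\mu$ and \eqref{eq:oscsol2} at $\Lambda_*^{(2)}=\lambda+2\mu$. These lists depend only on the differential equation, not on the boundary conditions, so only the effect of replacing \eqref{eq:BC2DDir} by \eqref{eq:BC2Dfree} needs to be tracked. The task thus reduces to substituting the two-parameter family \eqref{eq:oscsol1} and the one-parameter family \eqref{eq:oscsol2} into \eqref{eq:BC2Dfree} and computing the dimension of the resulting null-space.

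For part~(a), abbreviate $\kappa:=\sqrt{(\lambda+\mu)/(\lambda+2\mu)}$, so that the candidate \eqref{eq:oscsol1} has components $f_1(z)=c_1\,\er^{-\kappa z}$ and $f_2(z)=\ir\kappa\,c_1\,\er^{-\kappa z}+c_2$. The key algebraic identity $(\lambda+2\mu)\kappa^2=\lambda+\mu$ makes the second boundary condition in \eqref{eq:BC2Dfree} collapse to $-\ir\mu c_1=0$, forcing $c_1=0$; the first boundary condition then reduces to $\ir\mu c_2=0$, forcing $c_2=0$. Hence no non-trivial $\mathbf{v}$ survives and $j_*^{(1)}=0$.

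For part~(b), the candidate \eqref{eq:oscsol2} has $f_1\equiv c$, $f_2\equiv 0$, so the first component of \eqref{eq:BC2Dfree} is automatically satisfied while the second component reduces to $\ir\lambda\,c=0$. This forces $c=0$ whenever $\lambda\ne 0$, whereas if $\lambda=0$ every $c\in\mathbb{C}$ is admissible, yielding exactly one linearly independent vector $\mathbf{v}=(1,0)^T$. This gives the stated dichotomy $j_*^{(2)}=0$ for $\lambda\ne 0$ and $j_*^{(2)}=1$ for $\lambda=0$.

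The one point requiring care, and the only potential obstacle, is the justification that \eqref{eq:oscsol1} and \eqref{eq:oscsol2} genuinely exhaust all solutions of \eqref{eq:EP2Dfree} of the form $\mathbf{v}+\mathbf{f}(z)$ with $\mathbf{f}=o(1)$. At $\Lambda_*^{(2)}$ in particular one must rule out both the oscillatory transverse waves with real exponents $\zeta=\pm\sqrt{(\lambda+\mu)/\mu}$ and any generalised-eigenvector solution of the form $z\mathbf{w}+\cdots$ attached to the double root $\zeta=0$. These are excluded automatically: the oscillatory waves fail to be $o(1)$ and are not constant, while the generalised-eigenvector piece grows linearly in $z$ and therefore fits neither the constant vector $\mathbf{v}$ nor the decaying remainder $\mathbf{f}$. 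An analogous (but easier) verification applies at $\Lambda_*^{(1)}=\mu$, where the longitudinal exponent is genuinely imaginary and contributes the decaying term in \eqref{eq:oscsol1}.
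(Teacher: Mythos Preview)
Your proof is correct and follows essentially the same approach as the paper: substitute the general threshold solutions \eqref{eq:oscsol1} and \eqref{eq:oscsol2} into the free boundary condition \eqref{eq:BC2Dfree} and determine the dimension of the null-space. Your additional paragraph justifying the exhaustiveness of these solution lists is a welcome elaboration of a point the paper leaves implicit (by cross-reference to the Dirichlet case).
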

\begin{proof}
(a) Substituting \eqref{eq:oscsol1} into \eqref{eq:BC2Dfree} we obtain the linear system
\begin{equation*}
\begin{pmatrix}
-2\sqrt{\frac{\lambda+\mu}{\lambda+2\mu}} & \ir
\\
-\ir \mu&0
\end{pmatrix}
\begin{pmatrix}
c_1
\\
c_2
\end{pmatrix}
=
\begin{pmatrix}
0
\\
0
\end{pmatrix},
\end{equation*}
which has no nontrivial solutions. Indeed,
\begin{equation*}
\det \begin{pmatrix}
-2\sqrt{\frac{\lambda+\mu}{\lambda+2\mu}}c_1 & \ir
\\
-\ir \mu&0
\end{pmatrix}=-\mu<0\,.
\end{equation*}

(b) The claim follows at once by substituting \eqref{eq:oscsol2} into \eqref{eq:BC2Dfree}.
\end{proof}

\begin{lem}
\label{lem:EV2Dfree}
The problem \eqref{eq:EP2Dfree}, \eqref{eq:BC2Dfree} has precisely one eigenvalue
\begin{equation}
\label{eq:LambdaR}
\Lambda_R:=\mu \gamma_R^2\,,
\end{equation}
where $\gamma_R$ is given by \eqref{eq:gammaR}.
\end{lem}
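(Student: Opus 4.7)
The plan is to split the search for eigenvalues $\Lambda$ of \eqref{eq:EP2Dfree}, \eqref{eq:BC2Dfree} into four regimes and dispatch each: (i) $\Lambda \le 0$; (ii) $\Lambda \in (0, \mu)$, below the essential spectrum; (iii) the thresholds $\Lambda = \mu$ and $\Lambda = \lambda + 2\mu$; (iv) $\Lambda > \mu$ away from the thresholds. In regime (i), the form $\mathcal{E}$ of \eqref{eq:E} is non-negative, and its null vectors are Killing fields, whose tangential Fourier content is concentrated at $\xi' = 0$, contradicting our standing assumption $\|\xi'\| = 1$; hence no eigenvalue is $\le 0$. In regime (iv), the $h_1$ mode is oscillatory in $z$, so the only $L^2$ candidate for $\Lambda \in I^{(1)}$ is proportional to $(1, \mathrm{i}\kappa_2)^T \er^{-\kappa_2 z}$, and for $\Lambda \in I^{(2)}$ it is trivial; substitution into \eqref{eq:BC2Dfree} forces the amplitude to vanish. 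For regime (iii), a direct substitution analogous to the argument in Lemma \ref{lem:thresholds2Dfree} shows that no $L^2$ solution survives the boundary conditions.

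The essential part of the argument is regime (ii). Setting $\kappa_1 := \sqrt{1 - \Lambda/\mu}$ and $\kappa_2 := \sqrt{1 - \Lambda/(\lambda+2\mu)}$, both positive real on $(0, \mu)$, I will write the most general square-integrable solution of $\mathcal{L}'_\plane \mathbf{u} = \Lambda \mathbf{u}$ as
\begin{equation*}
\mathbf{u}(z;\Lambda) = c_1 \begin{pmatrix} -\mathrm{i}\kappa_1 \\ 1 \end{pmatrix} \er^{-\kappa_1 z} + c_2 \begin{pmatrix} 1 \\ \mathrm{i}\kappa_2 \end{pmatrix} \er^{-\kappa_2 z},
\end{equation*}
since the characteristic polynomial of $\mathcal{L}'_\plane - \Lambda I$ factors as $[\mu(1+\zeta^2) - \Lambda]\,[(\lambda+2\mu)(1+\zeta^2) - \Lambda] = 0$, and the two column vectors above are null vectors of the symbol at the decaying roots $\zeta = \mathrm{i}\kappa_1$ and $\zeta = \mathrm{i}\kappa_2$ respectively. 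Imposing \eqref{eq:BC2Dfree} on this ansatz yields a $2 \times 2$ homogeneous linear system for $(c_1, c_2)$, and a direct calculation shows that its determinant equals, up to a nonzero factor,
\begin{equation*}
(\Lambda - 2\mu)^2 - 4\mu^2 \sqrt{\bigl(1 - \Lambda/\mu\bigr)\bigl(1 - \Lambda/(\lambda+2\mu)\bigr)}.
\end{equation*}
Setting this to zero and substituting $w := \Lambda/\mu$, with $\alpha$ from \eqref{eq:alpha}, reproduces exactly the Rayleigh equation $\tilde R_\alpha(w) = 0$ in the form \eqref{eq:RayleighEquiv}. By Remark \ref{rem:Rayleigh}, this equation admits a unique real root $w = \gamma_R^2 \in (0, 1)$, so there is precisely one eigenvalue in $(0, \mu)$, namely $\Lambda_R = \mu\gamma_R^2$.

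Two bookkeeping points will need care. First, the signs in the eigenvector ansatz must be tracked so that the determinant condition lands on the non-squared Rayleigh form \eqref{eq:RayleighEquiv} rather than on the sextic \eqref{eq:Rayleigh}, which carries the spurious root $w = 0$ flagged in Remark \ref{rem:Rayleigh} and does not correspond to a genuine eigenvalue. Second, one must verify that the two decaying modes in the ansatz are linearly independent throughout $(0, \mu)$, which is immediate since $\kappa_1 \ne \kappa_2$ whenever $\alpha < 1$. No deeper obstacle is anticipated; everything else reduces to routine substitution and simplification.
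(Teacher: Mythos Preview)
Your proposal is correct and follows essentially the same route as the paper's proof: rule out thresholds and embedded eigenvalues by direct inspection, and for $\Lambda\in(0,\mu)$ write the general decaying solution as a combination of the two exponential modes, impose \eqref{eq:BC2Dfree}, and recognise the resulting determinant condition as the Rayleigh equation \eqref{eq:RayleighEquiv}. Your extra paragraph dealing with $\Lambda\le 0$ via the nonnegativity of $\mathcal{E}$ is a harmless addition the paper simply leaves implicit.
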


\begin{proof}
Arguing as in the proof of Lemma~\ref{lem:thresholds2Dfree}, it is easy to see that thresholds are not eigenvalues. 

For $\Lambda\in (0,\mu)$, we seek an eigenfunction in the form \eqref{eq:proofEV2DdirEq1}. Substituting \eqref{eq:proofEV2DdirEq1} into \eqref{eq:BC2Dfree} we get
\begin{equation*}
\begin{pmatrix}
-2\sqrt{1-\frac{\Lambda}{\lambda+2\mu}} & \ir \left(2-\frac{\Lambda}{\mu}\right)
\\
-\ir\mu\left(2-\frac{\Lambda}{\mu}\right)  & -2\mu \sqrt{1-\frac{\Lambda}{\mu}}
\end{pmatrix}
\begin{pmatrix}
c_1
\\
c_2
\end{pmatrix}
=
\begin{pmatrix}
0
\\
0
\end{pmatrix}.
\end{equation*}
Therefore, the characteristic equation reads
\begin{equation*}
\chi(\Lambda)=4\sqrt{1-\frac{\Lambda}{\lambda+2\mu}}\sqrt{1-\frac{\Lambda}{\mu}}-\left(2-\frac{\Lambda}{\mu}\right)^2=0\,, \qquad \Lambda\in (0,\mu).
\end{equation*}
We observe that 
\begin{equation}
\label{eq:prooflemEV2DfreeEq2bis}
\chi(\mu\Lambda)=\tilde R_\alpha(\Lambda),
\end{equation}
cf.~\eqref{eq:RayleighEquiv}. But $\tilde R_\alpha(\Lambda)=0$ has a unique solution $\Lambda=\gamma_R^2$ in $(0,1)$, as discussed in Remark~\ref{rem:Rayleigh}. Hence,  \eqref{eq:prooflemEV2DfreeEq2bis} implies \eqref{eq:LambdaR}.

For $\Lambda\in I^{(1)}$ we seek an eigenfunction in the form \eqref{eq:proofEV2DdirEq4}. Substituting \eqref{eq:proofEV2DdirEq4} into \eqref{eq:BC2Dfree}, one sees that the latter can only be satisfied if $c=0$. Therefore, there are no eigenvalues in $I^{(1)}$.

Lastly, by looking at \eqref{eq:EFessspec2D2},  it is easy to see that there are no (square-integrable) eigenfunctions corresponding to $\Lambda\in I^{(2)}$. This concludes the proof.
\end{proof}

Observe that $\Lambda_R<\Lambda_*^{(1)}$, that is, the Rayleigh eigenvalue is located below the continuous spectrum. 

Combining \eqref{eq:argdet2Dfree}, Lemma~\ref{lem:thresholds2Dfree}, and Lemma~\ref{lem:EV2Dfree}, one obtains Lemma~\ref{lem:shiftfree2D}.

Note that in Lemma~\ref{lem:shiftfree2D} there is no distinction between the cases $\lambda\ne 0$ and $\lambda=0$, which \emph{prima facie} may seem at odds with Lemma~\ref{lem:thresholds2Dfree}. However, there is no contradiction, because the different values of $j_*^{(2)}$ in the two cases are compensated exactly by the different values of the jump of \eqref{eq:argdet2Dfree} at the threshold $\Lambda_*^{(2)}$:
\begin{equation*}
\frac1\pi\,\lim_{\epsilon\to 0^+}\left(\arg \det S_{\free,\plane}(\Lambda_*^{(2)}+\epsilon)-\arg \det S_{\free,\plane}(\Lambda_*^{(2)}-\epsilon)\right)=
\begin{cases}
-1 & \text{for }\lambda\ne 0,\\
0 & \text{for }\lambda=0.
\end{cases}
\end{equation*}

\section*{Acknowledgements}
\addcontentsline{toc}{section}{Acknowledgements}

MC was partially supported by a Leverhulme Trust Research Project Grant RPG-2019-240, by a Research Grant (Scheme 4) of the London Mathematical Society, and by a grant of the Heilbronn Institute for Mathematical Research (HIMR) via the UKRI/EPSRC Additional Funding Programme for Mathematical Sciences. 
ML was partially supported by the EPSRC grants  EP/W006898/1 and EP/V051881/1 and by the University of Reading RETF Open Fund.  All the support is gratefully acknowledged.

\begin{appendices} 
\section{On the paper {\cite{Liu}}}
\label{appendix:Liu}

In this appendix, we continue the discussion of the paper \cite{Liu}, which we started in Remark~\ref{rem:Liu}.

Let us begin by observing that, at a qualitative level, the two-term expansion \cite[Theorem 1.1]{Liu} cannot be correct, because the two leading coefficients in the heat trace expansion have the same structure in the way the Lam\'e parameters appear in these coefficients. The boundary conditions mix up longitudinal and transverse waves; hence one expects that contributions from the Lam\'e parameters $\lambda$ and $\mu$ would mix up in a rather complicated way in the second coefficient. Mathematically, the reason for the erroneous expressions comes from the fact that the method of images does not work for the operator of linear elasticity, as we explain below.

For simplicity, in the spirit of Fact~\ref{fact:6}, we work in Euclidean space $\mathbb{R}^2$ endowed with coordinates $(x,y)$, with $\Omega:=\{(x,y)\,| \, y>0\}$ being the upper half-plane and its boundary $\partial\Omega$ being the $x$-axis $\{ y=0\}$.

Let $\tau: (x,y)\mapsto (x, -y)$ be a reflection with respect to the $x$-axis. For a function $u$ (vector- or scalar-valued), consider the involution $Ju:=u\circ\tau$, so that $(Ju)(x,y)=u(x,-y)$. 
It is obvious that $J$ commutes with either the vector or the scalar Laplacian on sufficiently smooth functions: 
\[
(J\circ \Delta)u = (\Delta\circ J)u=\left.(\Delta u)\right|_{\text{evaluated at the point }(x,-y)}.
\]      

Let now $\mathcal{L}$ be the elasticity operator, which acts on vector-valued functions $\mathbf{u}(x,y)=\begin{pmatrix}u_1(x,y)\\u_2(x,y)\end{pmatrix}$ as
\[
\mathcal{L}\mathbf{u}=\begin{pmatrix}-\mu\Delta-(\lambda+\mu)\partial^2_{xx}&-(\lambda+\mu)\partial^2_{xy}\\%
-(\lambda+\mu)\partial^2_{xy}&-\mu\Delta-(\lambda+\mu)\partial^2_{yy}\end{pmatrix}\begin{pmatrix}u_1\\u_2\end{pmatrix}   
=\begin{pmatrix}-(\lambda+2\mu)\partial^2_{xx} u_1-\mu\partial^2_{yy}u_1-(\lambda+\mu)\partial^2_{xy} u_2\\%
-(\lambda+2\mu)\partial^2_{yy} u_2-\mu\partial^2_{xx}u_2-(\lambda+\mu)\partial^2_{xy} u_1\end{pmatrix}.
\]
Then 
\[
(J\circ\mathcal{L})\mathbf{u}=\left.(\mathcal{L} \mathbf{u})\right|_{\text{evaluated at the point }(x,-y)},
\]
but by the chain rule
\[
(\mathcal{L}\circ J)\mathbf{u}=\left.\begin{pmatrix}-(\lambda+2\mu)\partial^2_{xx} u_1-\mu\partial^2_{yy}u_1+(\lambda+\mu)\partial^2_{xy} u_2\\
-(\lambda+2\mu)\partial^2_{yy} u_2-\mu\partial^2_{xx}u_2+(\lambda+\mu)\partial^2_{xy} u_1\end{pmatrix}\right|_{\text{evaluated at the point }(x,-y)}
\]
(i.e., \emph{the signs of the off-diagonal terms involving mixed derivatives  change}), and therefore the commutator $[J, \mathcal{L}]$ does not vanish. Since the elasticity operator  does not commute with reflections, the reflection method (or the method of images) is inapplicable.  

\

The above argument shows that the principal symbol of the Laplacian (or the Laplace--Beltrami operator when working in curved space) is invariant under reflection, whereas the principal symbol of the operator of linear elasticity is not. This is what makes the method of images work for the Laplacian, but not for the operator of linear elasticity. The key difference between the Laplacian (or the Laplace--Beltrami operator) and the operator of linear elasticity is the presence of mixed derivatives in the leading term of the latter.

\

For the reader's convenience, let us spell out the precise points in \cite{Liu} where the main mistakes occur, as a result of the operator $\mathcal{L}$ not being invariant under reflections. Below we use the notation from \cite{Liu}.

The author defines $\mathcal{M}:=\Omega\cup \partial\Omega \cup \Omega^*$ to be the ``double" of $\Omega$, and $\mathcal{P}$ to be the ``double" of the operator of linear elasticity in $\mathcal{M}$. In the simplified setting of this appendix, $\mathcal{M}=\mathbb{R}^2$ and 
\[
\mathcal{P}:=
\begin{cases}
\mathcal{L} & \text{in }\Omega=\{y>0\}\,,
\\
\mathcal{L}+2(\lambda+\mu)
\begin{pmatrix}
0 & \partial_x\\
\partial_x & 0
\end{pmatrix}
\frac{\partial}{\partial y} & \text{in }\Omega^*=\{y<0\}\,.
\end{cases}
\]
Given $\mathbf{u},\mathbf{v}\in C^\infty_c(\mathbb{R}^2)$ (the space of infinitely smooth functions with compact support), by a straightforward integration by parts one obtains
\begin{equation}
\label{P not symmetric}
(\mathcal{P}\mathbf{u},\mathbf{v})-(\mathbf{u},\mathcal{P}\mathbf{v})
=
4(\lambda+\mu)
\int_{\{y=0\}} \left(\partial_x u_1 \, \overline{v_2} - u_2 \,\overline{\partial_x v_1} \right)\,\dr x\,.
\end{equation}
Here $(\cdot\,,\,\cdot)$ denotes the natural $L^2$ inner product and overline denotes complex conjugation. But \eqref{P not symmetric} implies that $\mathcal{P}$ is not symmetric; therefore, it does not give rise to a heat operator.

As a result, the statement
\emph{``$\mathcal{P}$ generates a strongly continuous semigroup $\left(e^{t\mathcal{P}}\right)_{t\ge 0}$ on $L^2(\mathcal{M})$ with integral kernel $K(t,x,y)$"}
in \cite[p.~10169, third line after (1.14)]{Liu} is wrong, and all the analysis based on it breaks down (including \cite[formula (4.3)]{Liu}).

\

In \cite{Liu}, the author states that they borrow their technique from McKean and Singer \cite{McKS}. Indeed, the paragraph preceding formula (4.3) in \cite[p.~10183]{Liu} is taken, almost verbatim, from the beginning of Section~5 of \cite[p.~53]{McKS}. However, McKean and Singer applied the method of images to the Laplacian, for which the ``double" operator is self-adjoint. 

\

Let us conclude this appendix with a brief historical account.
We note that the expression for $\tilde b_\Dir$ was already found\footnote{Not in a mathematically rigorous way, and for specific heat.} in the 1960 paper by M.~Dupuis, R.~Mazo, and L.~Onsager\footnote{The 1968 Nobel Laureate (Chemistry).} \cite{Onsager}. Remarkably, this paper includes the critique of the 1950 paper by E.~W.~Montroll who presented \emph{exactly} Liu's expression \eqref{eq:BLiu} for the second  asymptotic coefficient, modulo some scaling, see \cite[formulae (3)--(5)]{Mon}. Dupuis, Mazo, and Onsager  wrote, we quote: ``\emph{Montroll \dots pointed out in 1950 a defect in the usual counting process of the normal modes of vibration and derived a corresponding correction term for the Debye frequency spectrum, \dots,
proportional to the area of the solid. But it must be clearly realized that he used as a model a parallelopiped with perfectly reflecting faces, and that such boundary conditions are not realistic. It is well known that in the case of a free surface as well as in the case of a clamped surface, one cannot satisfy the boundary conditions by the simple superposition of an incident wave and of a reflected wave of the same kind; one must add a transverse reflected wave if the incident is longitudinal and vice versa. The surface "scrambles" the waves so that one can no longer analyze the vibrations of the solid in terms of pure transverse and pure longitudinal modes.}'' 
%In essence, \textcolor{red}{Liu has rediscovered}
%\textcolor{teal}{\cite[Theorem 1.1]{Liu} reproduced}
% the seventy-year-old Montroll \textcolor{red}{formula} \textcolor{teal}{result} which has been known to be incorrect for sixty years.     

The results of Dupuis, Mazo, and Onsager for $d=3$ were reproduced, for both the Dirichlet and the free boundary conditions, as rigorous theorems\footnote{We acknowledge that the level of detail in \cite{SaVa} is somewhat insufficient, as observed in Remark \ref{rem:various}(ii). One should also note that the spectral parameter used in \cite{SaVa} is the square of our spectral parameter $\Lambda$.} in \cite[Section 6.3]{SaVa}, who also extended these results to the planar case $d=2$. 

\section{A two-dimensional example: the disk}\label{sec:ex2d}
Our aim in this (and in the next) appendix to give an experimental verification of the correctness of the second  asymptotic coefficients \eqref{eq:maintheoremDir} and \eqref{eq:maintheoremfree} and to demonstrate the incorrectness of the second  asymptotic coefficient \eqref{eq:BLiu}--\eqref{eq:tildeBLiu}. We work with counting functions rather than with partition functions since the former can, in some cases, be explicitly\footnote{As we will see, such calculations involve solving some transcendental equations, which can be easily done pretty accurately numerically. Still, ``explicitly" should not be taken literally.} calculated for reasonably large values of the parameter, whereas computing the latter would require additional trickery.  

Let $\Omega\subset\mathbb{R}^d$ be the unit disk, equipped with standard polar  coordinates $(r,\phi)$. The equations of elasticity \eqref{eq:Lspec} with Dirichlet boundary conditions in the disk allow the separation of variables\footnote{Like balls in higher dimensions and unlike rectangles, or boxes in higher dimensions --- this was already known to Debye.}. To this end, we, in essence, separate variables in the three-dimensional cylinder $\Omega\times\mathbb{R}$ following \cite[Chapter XIII]{MoFe} and looking for solutions independent of the third coordinate, cf. also \cite[Supplementary materials]{LMS}. More precisely, we take
\begin{equation}\label{eq:udisk}
u(r,\phi)=\grad \psi_1(r,\phi)+\curl\left(\mathbf{z}\,\psi_2(r,\phi)\right),
\end{equation}
where $\mathbf{z}$ is the third coordinate vector. Then it is easily seen that the scalar potentials $\psi_j(r,\phi)$, $j=1,2$, should satisfy the Helmholtz equations
\begin{equation}\label{eq:scHelm}
-\Delta \psi_j=\omega_{j,\Lambda} \psi_j,
\end{equation}
where
\begin{equation}\label{eq:kappas}
\omega_{1,\Lambda}:=\frac{\Lambda}{\lambda+2\mu},\qquad \omega_{2,\Lambda}:=\frac{\Lambda}{\mu}.
\end{equation} 
The general solution of \eqref{eq:scHelm} regular at the origin is well-known,
\begin{equation}\label{eq:Helmsol}
\psi_j(r,\phi)=c_{j,0} J_0\left(\sqrt{\omega_{j,\Lambda}} r\right)+\sum_{k=1}^\infty J_k\left(\sqrt{\omega_{j,\Lambda}} r\right) \left(c_{j,k,+} \mathrm{e}^{\mathrm{i}k\phi}+c_{j,k,-} \mathrm{e}^{-\mathrm{i}k\phi}\right),
\end{equation} 
where the $J_k$ are Bessel functions, and the $c$'s are constants. Substituting \eqref{eq:udisk}, \eqref{eq:kappas}, and \eqref{eq:Helmsol} into the boundary condition $u(1, \phi)=0$ leads, after simplifications, to the \emph{secular equations}
\begin{equation}\label{eq:sec0}
J_1\left(\sqrt{\omega_{1,\Lambda}}\right)J_1\left(\sqrt{\omega_{2,\Lambda}}\right)=0,\qquad\text{if }k=0,
\end{equation}
and
\begin{equation}\label{eq:seck}
\begin{split}
&k \sqrt{\omega _{2,\Lambda }} J_k\left(\sqrt{\omega _{1,\Lambda }}\right)
J_{k+1}\left(\sqrt{\omega _{2,\Lambda }}\right)\\
&+\sqrt{\omega _{1,\Lambda }}J_{k+1}\left(\sqrt{\omega _{1,\Lambda }}\right) \left(k J_k\left(\sqrt{\omega_{2,\Lambda }}\right)
-\sqrt{\omega _{2,\Lambda }} J_{k+1}\left(\sqrt{\omega_{2,\Lambda }}\right)\right)=0,\qquad\text{if }k>0.
\end{split}
\end{equation}
Every solution $\Lambda>0$ of the secular equation \eqref{eq:sec0} is an eigenvalue of multiplicity one of the Dirichlet elasticity operator $\mathcal{L}^\Dir$ on the unit disk, 
and  every solution $\Lambda>0$ of the secular equation \eqref{eq:seck} is an eigenvalue of multiplicity two.

Note that for the case of the disk the branching Hamiltonian billiards associated with the operator of elasticity can be analysed explicitly, and one can check that the two-term asymptotics \eqref{eq:N2term} is valid.

The numerical results are shown in Figure \ref{fig:disk}.

\begin{figure}[htb]
\begin{center}
\includegraphics[width=\textwidth]{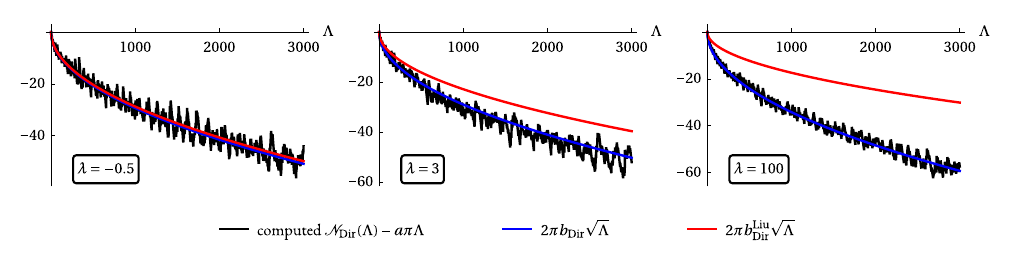}
\caption{The Dirichlet problem for the unit disk. In this case, \eqref{eq:N2term}  may be interpreted as $\mathcal{N}_\Dir(\Lambda)-a\pi\Lambda\approx  2\pi b_\Dir\sqrt{\Lambda}$,
and we compare the numerically computed left-hand sides with the right-hand sides computed using our and Liu's expressions for $b_\Dir$. In all three images we take $\mu=1$.}\label{fig:disk}
\end{center}
\end{figure}

The free boundary problem for the disk is treated in the same manner, the results are shown in Figure \ref{fig:diskfree}.
 
\begin{figure}[htb]
\begin{center}
\includegraphics[width=\textwidth]{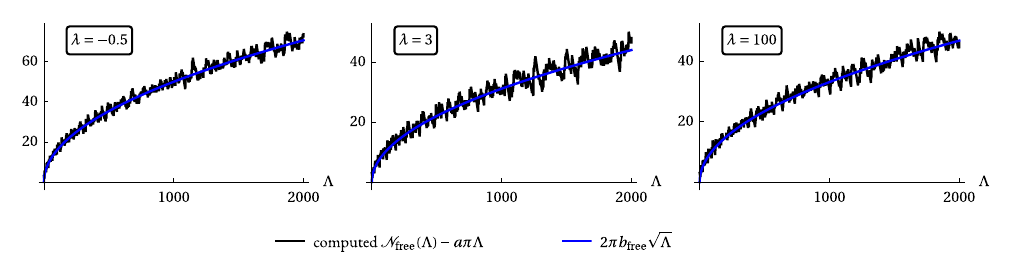}
\caption{The free boundary counting function for the unit disk: in all three figures $\mu=1$.}\label{fig:diskfree}
\end{center}
\end{figure}

\section{Three-dimensional examples: flat cylinders}\label{sec:ex3d}

We consider\footnote{This example goes back to \cite{Onsager}.} $\Omega=\Omega_{3,h}:=\mathbb{T}^2\times[0,h]$, where $\mathbb{T}^2$ is a flat square torus with side $2\pi$ and $h>0$ is the height of the cylinder, so that $\Vol_3(\Omega)=(2\pi)^2 h$ and $\Vol_2(\partial\Omega)=8\pi^2$. We can once again separate variables by first setting
\begin{equation}\label{eq:ucyl3}
u(x^1,x^2,x^3)=\grad \psi_1(x^1,x^2,x^3)+\curl\left(\mathbf{z}\,\psi_2(x^1,x^2,x^3)\right)+\curl\curl\left(\mathbf{z}\,\psi_3(x^1,x^2,x^3)\right), 
\end{equation}
where the $\psi_j$ are scalar potentials, and $\mathbf{z}$ is the coordinate vector in the direction of $x^3$.  Once again, it is easy to see that each potential $\psi_j$ satisfies \eqref{eq:scHelm}, \eqref{eq:kappas}, with
\begin{equation}\label{eq:kappa3}
\omega_{3, \Lambda}:=\omega_{2, \Lambda}.
\end{equation}
The general solutions of \eqref{eq:scHelm} are now
\begin{equation}\label{eq:HelmsolC}
\begin{split}
\psi_j(x^1,x^2,x^3)=\sum_{(k_1, k_2)\in \mathbb{Z}^2} &c_{j, k_1, k_2,+} \exp\left(\mathrm{i}\left(k_1 x^1+k_2 x^2+\sqrt{\omega_{j,\Lambda}-k_1^2-k_2^2}\ x^3\right)\right)\\
+&c_{j, k_1, k_2,-} \exp\left(\mathrm{i}\left(k_1 x^1+k_2 x^2-\sqrt{\omega_{j,\Lambda}-k_1^2-k_2^2}\ x^3\right)\right).
\end{split}
\end{equation}

Substitution of \eqref{eq:ucyl3},  \eqref{eq:kappas}, \eqref{eq:kappa3}, and \eqref{eq:HelmsolC} into the boundary conditions at $x^3=0$ and $x^3=h$ leads to some secular equations\footnote{We omit quite complicated explicit formulae and note only that in this case the numerical solution of these equations is non-trivial, in particular in the free boundary case.} which, as it turns out, depend only on the values of 
\[
K:=k_1^2+k_2^2\in\mathbb{N}\cup\{0\}
\]
rather than on the values of $k_1, k_2$ themselves; we therefore only need to consider the values of $K$ with $\Sigma_2(K)>0$ where
\[
\Sigma_2(K):=\#\left\{(k_1, k_2)\in \mathbb{Z}^2: k_1^2+k_2^2=K\right\}
\]
is the \emph{sum of squares} function. Each solution $\Lambda>0$ of a secular equation corresponding to such a $K$ will be an eigenvalue of multiplicity $\Sigma_2(K)$.  

The results of our computations in the Dirichlet case are collated in Figure \ref{fig:cyl3} and in the free boundary case in Figure \ref{fig:cyl3free}. 

\begin{figure}[htb] 
\begin{center}
\includegraphics[width=\textwidth]{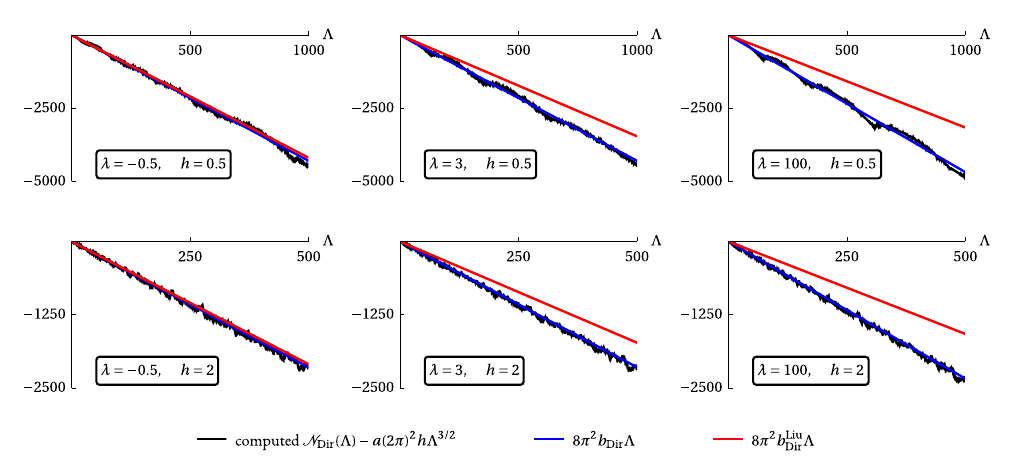}
\caption{The Dirichlet problem for flat cylinders. In all images $\mu=1$.}\label{fig:cyl3}
\end{center}
\end{figure}

\begin{figure}[htb]
\begin{center}
\includegraphics[width=\textwidth]{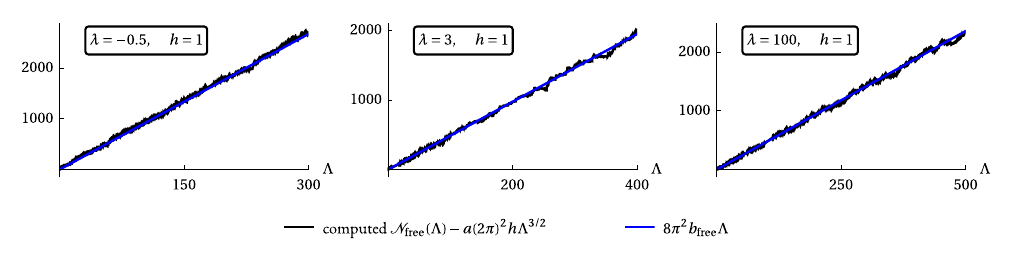}
\caption{The free boundary problem for flat cylinders. In all images $\mu=1$.}\label{fig:cyl3free}
\end{center}
\end{figure}

\section{{Second Weyl coefficients in odd dimensions: proof of Theorem~\ref{thm:formulaeOddDim}}}
\label{appendix:proof}

This appendix is devoted to the proof of Theorem~\ref{thm:formulaeOddDim}.  We will prove \eqref{eq:maintheoremDirOdd} and \eqref{eq:maintheoremfreeOdd} separately, in \S\ref{subsec:dirichletodd} and~\S\ref{subsec:freeodd} respectively, by explicitly evaluating the integrals in the right-hand sides of \eqref{eq:maintheoremDir} and \eqref{eq:maintheoremfree}.

\

In this appendix we denote complex variables by
\begin{equation*}
w=t+\ir s, \qquad t,s\in \mathbb{R}.
\end{equation*}

\subsection{{Dirichlet case: proof of \eqref{eq:maintheoremDirOdd}}}
\label{subsec:dirichletodd}

We begin by observing that, by performing a change of variable $t=\tau^{-2}$,  one obtains
\begin{equation*}
\int \limits_{\sqrt{\alpha}}^1 \tau^{2k-1}\arctan\left(\sqrt{(1-\alpha\tau^{-2})\left(\tau^{-2}-1\right)} \right)\dr\tau
=
\frac12
\int \limits_1^{1/\alpha} \frac{\arctan\left(\sqrt{(1-\alpha t)\left(t-1\right)} \right)}{t^{k+1}} \dr t\,.
\end{equation*}

Therefore,  proving \eqref{eq:maintheoremDirOdd} reduces to establishing the following

\begin{lem}
\label{lem:IkDir}
For $k=1,2,\ldots$ we have
\begin{equation}
\label{eq:lemIkDirEq1}
\frac{2k}{\pi}\int \limits_1^{1/\alpha} \frac{\arctan\left(\sqrt{(1-\alpha t)\left(t-1\right)} \right)}{t^{k+1}} \dr t
=
\left.
\frac{1}{k!}\frac{\dr^k}{\dr t^k}
\left(
\frac{2t-\frac{\alpha+1}{\alpha}}{t-\frac{\alpha+1}{\alpha}}\frac{1}{\sqrt{(1-\alpha t)(1-t)}}
\right)
\right|_{t=0}
-
\left(\frac{\alpha}{\alpha+1}\right)^k\,.
\end{equation}
\end{lem}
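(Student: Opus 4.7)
The plan is to reduce \eqref{eq:lemIkDirEq1} to a contour-integral calculation on the Riemann sphere and to evaluate that contour integral by residues.

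I would begin with a short integration by parts in the left-hand side, using $u=\arctan\sqrt{(1-\alpha t)(t-1)}$ and $\dr v = t^{-k-1}\,\dr t$. The boundary terms automatically vanish because the arctan evaluates to $0$ at both endpoints $t=1$ and $t=1/\alpha$, and the identities $1+y^2 = t(1+\alpha-\alpha t)$ and $y' = \bigl((1+\alpha)-2\alpha t\bigr)/(2y)$ (for $y(t)=\sqrt{(1-\alpha t)(t-1)}$) transform the left-hand side into the simpler integral
\[
I_k \;=\; \frac{1}{\pi}\int_1^{1/\alpha} \frac{(1+\alpha)-2\alpha t}{t^{k+1}\,(1+\alpha-\alpha t)\,\sqrt{(1-\alpha t)(t-1)}}\,\dr t.
\]

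The next step is to recognise $I_k$ as arising from the jump of a meromorphic function across the segment $[1,1/\alpha]$. Let
\[
F(w)\;:=\;\frac{(1+\alpha)-2\alpha w}{(1+\alpha-\alpha w)\,g(w)}\,,\qquad g(w)\;:=\;\sqrt{(1-\alpha w)(1-w)}\,,
\]
where $g$ is the analytic branch on $\mathbb{C}\setminus[1,1/\alpha]$ fixed by $g(0)=1$. A short local analysis at the cut gives $g(t\pm\ir 0^+)=\mp\,\ir\sqrt{(1-\alpha t)(t-1)}$, from which one reads off that $\oint_C F(w)\,w^{-k-1}\,\dr w = -2\pi\ir\,I_k$ for any positively oriented loop $C$ tightly enclosing the cut.

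I would then deform $C$ outward to a circle of very large radius. The integrand $F(w)\,w^{-k-1}$ is meromorphic on $\mathbb{C}\setminus[1,1/\alpha]$ with only two poles outside $C$: a pole of order $k+1$ at $w=0$, whose residue $\tfrac{1}{k!}F^{(k)}(0)$ is (after simplifying the ratio $(2t-(\alpha+1)/\alpha)/(t-(\alpha+1)/\alpha) = (1+\alpha-2\alpha t)/(1+\alpha-\alpha t)$) precisely the Taylor-coefficient expression on the right-hand side of \eqref{eq:lemIkDirEq1}; and a simple pole at $w_0:=(1+\alpha)/\alpha$ coming from the zero of the denominator factor $(1+\alpha-\alpha w)$. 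Since $F(w) = O(1/w)$ at infinity, the large-circle contribution vanishes, and the residue theorem yields $I_k = \tfrac{1}{k!}F^{(k)}(0) + \operatorname{Res}_{w=w_0}\!\bigl(F(w)/w^{k+1}\bigr)$.

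The hard part is determining the correct value of $g(w_0)$. One has $g(w_0)^2 = (1-\alpha w_0)(1-w_0) = (-\alpha)(-1/\alpha) = 1$, so $g(w_0) = \pm 1$, and the entire minus sign in \eqref{eq:lemIkDirEq1} hinges on picking the right branch; a naive use of the principal square root gives the wrong sign. The cleanest verification, I think, is to observe that the single-valued branch of $g$ fixed by $g(0)=1$ satisfies $g(w)\sim -\sqrt{\alpha}\,w$ as $w\to\infty$ (as one sees from continuity along the positive imaginary semi-axis, where $g(\ir y)\to -\ir\sqrt{\alpha}\,y$ as $y\to+\infty$). Since $g$ is real on the portion of the real axis outside the cut (by Schwarz reflection), this forces $g(w)=-\sqrt{(1-\alpha w)(1-w)}$ for $w>1/\alpha$ real, and in particular $g(w_0)=-1$. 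Consequently the simple-pole residue equals $\dfrac{(1+\alpha)-2\alpha w_0}{-\alpha\cdot g(w_0)}\cdot w_0^{-k-1} = -w_0^{-k}=-(\alpha/(1+\alpha))^k$, and combining the two residues yields \eqref{eq:lemIkDirEq1}.
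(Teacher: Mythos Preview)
Your proof is correct and follows essentially the same route as the paper: integrate by parts to remove the $\arctan$, then recognise the resulting real integral as half the jump of a meromorphic function across the cut $[1,1/\alpha]$ and evaluate by residues at $w=0$ and $w=(1+\alpha)/\alpha$, with vanishing contribution at infinity. The only cosmetic difference is that the paper fixes the branch of $\sqrt{(1-\alpha w)(w-1)}$ by declaring it positive just above the cut, whereas you fix the branch of $g(w)=\sqrt{(1-\alpha w)(1-w)}$ by $g(0)=1$ and then track the sign at $w_0$ via the asymptotic $g(w)\sim -\sqrt{\alpha}\,w$; your treatment of this sign determination is in fact more explicit than the paper's.
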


\begin{proof}
The task at hand is to evaluate the integral
\begin{equation*}
\mathcal I_k:=\int \limits_1^{1/\alpha} \frac{\arctan\left(\sqrt{(1-\alpha t)\left(t-1\right)} \right)}{t^{k+1}} \dr t
\end{equation*}
for $k\in \mathbb{N}$. Observing that the inverse tangent turns to zero at the endpoints of the interval of integration and integrating by parts, one obtains
\begin{equation*}
\mathcal I_k=\frac1{2k} \int \limits_1^{1/\alpha} \frac1{t^{k+1}}\left(\frac{2t-\frac{\alpha+1}{\alpha}}{t-\frac{\alpha+1}{\alpha}} \frac1{\sqrt{(1-\alpha t)\left(t-1\right)}} \right) \dr t\,.
\end{equation*}
Let
\begin{equation*}
f_k(w):=\frac1{2k}\frac1{w^{k+1}}\left(\frac{2w-\frac{\alpha+1}{\alpha}}{w-\frac{\alpha+1}{\alpha}} \frac1{\sqrt{(1-\alpha w)\left(w-1\right)}} \right), \qquad w\in \mathbb{C}.
\end{equation*}
It is easy to see that the function $f_k$ with branch cut $[1, \alpha^{-1}]$ is holomorphic in a neighbourhood of $[1, \alpha^{-1}]$ and meromorphic in $\mathbb{C}\setminus [1, \alpha^{-1}]$ with poles at 
\begin{equation*}
w=0 \quad (\text{of order }k+1), \qquad w=1+\alpha^{-1} \quad (\text{of order }1)\,.
\end{equation*}
We choose the branch of the square root so that it is positive above the branch cut and negative below the branch cut.  Note that the poles are not on the branch cut.  

Let $\Gamma_\epsilon$ be a negatively oriented (clockwise) dog-bone contour around $[1,\alpha^{-1}]$, see Figure~\ref{fig:contourDir}. Then
\begin{equation}
\label{eq:proofIkDirEq5}
\mathcal{I}_k=\frac12\,\lim_{\epsilon\to 0}\int_{\Gamma_\epsilon} f_k(w) \, dw\,.
\end{equation}

\begin{figure}[htb]
\centering
\includegraphics{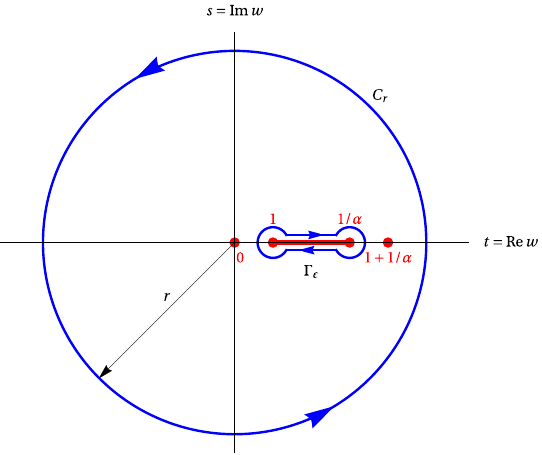}
\caption{The contour of integration for the Dirichlet problem. The two small circles centred at $1$ and $1/\alpha$ have radius $\epsilon$.}\label{fig:contourDir}
\end{figure}

Let $C_r$ be a positively oriented (counterclockwise) circular curve of radius $r$, see Figure~\ref{fig:contourDir}, with $r>1+\alpha^{-1}$. Then by Cauchy's Residue Theorem we have
\begin{equation}
\label{eq:proofIkDirEq6}
\int_{\Gamma_\epsilon\cup C_r} f_k(w)\,\dr w=2\pi \ir \left(\mathrm{Res}(f_k,0)+\mathrm{Res}(f_k,1+\alpha^{-1})\right)\,,
\end{equation}
so that,  combining \eqref{eq:proofIkDirEq5} and \eqref{eq:proofIkDirEq6},  we obtain
\begin{equation}
\label{eq:proofIkDirEq7}
\mathcal{I}_k=\pi \ir \left(\mathrm{Res}(f_k,0)+\mathrm{Res}(f_k,1+\alpha^{-1})+ \mathrm{Res}(f_k,\infty)\right)\,.
\end{equation}

Straightforward calculations give us
\begin{equation}
\label{eq:proofIkDirEq8}
\mathrm{Res}(f_k,0)=\frac1{2k\ir}\frac{1}{k!}\left.\frac{\dr^k}{\dr t^k}\left(\frac{2t-\frac{\alpha+1}{\alpha}}{t-\frac{\alpha+1}{\alpha}}\frac{1}{\sqrt{(1-\alpha t)(1-t)}}  \right)\right|_{t=0}\,,
\end{equation}
\begin{equation*}
\mathrm{Res}(f_k,1+\alpha^{-1})=-\frac1{2k\ir}\,\left(\frac{\alpha}{\alpha+1} \right)^{k}\,,
\end{equation*}
\begin{equation}
\label{eq:proofIkDirEq10}
 \mathrm{Res}(f_k,\infty)=0\,.
\end{equation}
Here we used that $\sqrt{(1-\alpha t)(t-1)}=\ir \sqrt{(1-\alpha t)(1-t)}$ for $t<1$,  and that $\left. \sqrt{(1-\alpha t)(t-1)}\right|_{t=1+\alpha^{-1}}=-\ir$.

Substituting \eqref{eq:proofIkDirEq8}--\eqref{eq:proofIkDirEq10} into \eqref{eq:proofIkDirEq7} we arrive at \eqref{eq:lemIkDirEq1}.
\end{proof}

\subsection{{Free boundary case: proof of \eqref{eq:maintheoremfreeOdd}}}
\label{subsec:freeodd}

As above,  we observe that by performing a change of variable $t=\tau^{-2}$ one obtains
\begin{equation}
\label{eq:chvarfreeodd}
\int\limits_{\sqrt{\alpha}}^1 \tau^{2k-1}\arctan\left(\frac{\left(\tau^{-2}-2\right)^2}{4\sqrt{(1-\alpha\tau^{-2})\left(\tau^{-2}-1\right)}} \right)\dr\tau
=
\frac12\int\limits_{1}^{1/\alpha} \frac{\arctan\left(\frac{\left(t-2\right)^2}{4\sqrt{(1-\alpha t)\left(t-1\right)}} \right)}{t^{k+1}}\dr t\,.
\end{equation}

We have
\begin{lem}
\label{lem:Ikfree}
For $k=1,2,\ldots$ we have
\begin{equation}
\label{eq:lemIkfree}
\begin{split}
&\qquad\frac{4k}{\pi}\int\limits_{1}^{1/\alpha} \frac{\arctan\left(\frac{\left(t-2\right)^2}{4\sqrt{(1-\alpha t)\left(t-1\right)}} \right)}{t^{k+1}}\dr t
\\
&=
-\frac{8}{k!}\left.\frac{\dr^{k}}{\dr t^k}
\left(
\frac{(t-2)(2\alpha t^2+(\alpha-3)t+2(1-\alpha))}{t^3-8t^2+8(3-2\alpha)t+16(\alpha-1)}
 \left(
\frac1{\sqrt{(1-\alpha t)\left(1-t\right)}}
+
\frac4{(t-2)^2}
\right)
\right)
\right|_{t=0}
\\
&+2^{3-k}
+
2(1-\alpha^{k})
-4\gamma_R^{-2k}.
\end{split}
\end{equation}
\end{lem}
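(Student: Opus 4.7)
Following the template of Lemma~\ref{lem:IkDir}, I would first use the change of variable \eqref{eq:chvarfreeodd} to rewrite the left-hand side of \eqref{eq:lemIkfree}, and then integrate by parts, moving $t^{-k-1}$ onto a primitive. At both endpoints $t=1$ and $t=1/\alpha$ the radicand vanishes, the argument of the arctangent blows up to $+\infty$, and the arctangent tends to $\pi/2$; the boundary contribution is $\frac{\pi(1-\alpha^k)}{2k}$, which --- after multiplication by $4k/\pi$ --- produces the summand $2(1-\alpha^k)$ on the right-hand side of \eqref{eq:lemIkfree}.

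\textbf{Simplifying the derivative of the arctangent.} Using the algebraic identity $(w-2)^4-16(1-\alpha w)(1-w)=w\,R_\alpha(w)$, which is verified by direct expansion and underlies the rationalisation \eqref{eq:R(w)}--\eqref{eq:RayleighEquiv} of Rayleigh's equation, a short calculation gives
\begin{equation*}
\frac{\dr}{\dr t}\arctan\!\left(\frac{(t-2)^2}{4\sqrt{(1-\alpha t)(t-1)}}\right)=-\frac{2(t-2)\bigl(2\alpha t^2+(\alpha-3)t+2(1-\alpha)\bigr)}{t\,R_\alpha(t)\sqrt{(1-\alpha t)(t-1)}}.
\end{equation*}
The bulk integral after IBP is therefore of the same type as in Lemma~\ref{lem:IkDir}, but with an extra polynomial factor $R_\alpha(t)$ in the denominator.

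\textbf{Contour integration.} I would then evaluate this bulk integral by the same clockwise dog-bone contour argument as in Lemma~\ref{lem:IkDir}, with the branch of $\phi(w):=\sqrt{(1-\alpha w)(w-1)}$ cut along $[1,1/\alpha]$ and positive on its upper side. The integrand is meromorphic off the cut with a pole of order $k+1$ at $w=0$ and simple poles at the three roots $w_1=\gamma_R^2$, $w_2$, $w_3$ of $R_\alpha$; a degree count shows that the residue at $\infty$ vanishes. The residue at $w=0$ yields a $k$-th derivative at zero of the first summand inside the brackets of \eqref{eq:lemIkfree}. At each Rayleigh root $w_j$, the polynomial identity $16\bigl(2\alpha w_j^2+(\alpha-3)w_j+2(1-\alpha)\bigr)=w_j(w_j-2)R_\alpha'(w_j)$ --- verified by reducing $w_j^3,w_j^4$ modulo $R_\alpha(w_j)=0$ --- collapses the residue to $(w_j-2)^2/\bigl(16\,w_j^k\,\phi(w_j)\bigr)$; combined with $\phi(w_j)^2=-(w_j-2)^4/16$ (a consequence of the rationalisation identity), this gives a residue of the form $-s_j\ir/(4w_j^k)$ with $s_j\in\{\pm1\}$.

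\textbf{Main obstacle and partial-fraction cleanup.} The main obstacle is pinning down the signs $s_j$. I would do this via the factorisation $w\,R_\alpha(w)=\bigl((w-2)^2+4\ir\phi(w)\bigr)\bigl((w-2)^2-4\ir\phi(w)\bigr)$, whose two factors are single-valued and holomorphic on $\mathbb{C}\setminus[1,1/\alpha]$: by direct inspection (dealing separately with the real case, which occurs e.g.~for $\alpha>1/2$ with $w_2\in(1/\alpha,2)$ and $w_3>2$, and the complex-conjugate case, where the Schwarz-type relation $\phi(\bar w)=-\overline{\phi(w)}$ pairs the residues), $w_1$ is a zero of the first factor ($s_1=+1$) while $w_2,w_3$ are zeros of the second ($s_2=s_3=-1$). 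Hence $\pi\ir$ times the sum of residues at the three Rayleigh roots equals $\tfrac{\pi}{4}(\gamma_R^{-2k}-w_2^{-k}-w_3^{-k})$. Finally, to absorb the $w_2^{-k}+w_3^{-k}$ contribution into the derivative-at-zero term, I would use the partial-fraction identity
\begin{equation*}
\frac{4\bigl(2\alpha w^2+(\alpha-3)w+2(1-\alpha)\bigr)}{(w-2)R_\alpha(w)}=\frac{-2}{w-2}+\sum_{j=1}^{3}\frac{w_j/4}{w-w_j},
\end{equation*}
whose residue $-2$ at $w=2$ is a direct plug-in (using $R_\alpha(2)=8(1-2\alpha)$) and whose residues $w_j/4$ at the Rayleigh roots follow from the same polynomial identity used above; its Taylor expansion at $w=0$ is $\sum_{n\ge 0}\bigl(2^{-n}-\tfrac14\sum_j w_j^{-n}\bigr)w^n$. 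Substituting and rearranging produces the full expression inside the brackets of \eqref{eq:lemIkfree}, together with the constants $2^{3-k}$ and $-4\gamma_R^{-2k}$.
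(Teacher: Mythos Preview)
Your proposal is correct and follows the same overall route as the paper: integrate by parts (picking up the boundary term $\tfrac{\pi}{2k}(1-\alpha^k)$), evaluate the resulting integral by a dog-bone contour around $[1,1/\alpha]$, and compute residues at $0$ and at the three Rayleigh roots (the residue at infinity vanishing by degree count). The derivative formula for the $\arctan$, the identity $(w-2)^4-16(1-\alpha w)(1-w)=wR_\alpha(w)$, and the key reduction $16\bigl(2\alpha w_j^2+(\alpha-3)w_j+2(1-\alpha)\bigr)=w_j(w_j-2)R_\alpha'(w_j)$ all match the paper's computations exactly.

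The organisational difference lies in how the residues at $w_1,w_2,w_3$ are handled. The paper groups them as $\bigl(\operatorname{Res}_{w_2}+\operatorname{Res}_{w_3}-\operatorname{Res}_{w_1}\bigr)+2\operatorname{Res}_{w_1}$; the first bracket is then recognised as $\sum_j\operatorname{Res}(\tilde h_k,w_j)$ for the \emph{rational} function $\tilde h_k(w)=\dfrac{4\ir}{w^{k+1}}\dfrac{2\alpha w^2+(\alpha-3)w+2(1-\alpha)}{(w-2)R_\alpha(w)}$ (this is exactly your sign observation $s_1=+1$, $s_2=s_3=-1$ in disguise), and a second application of Cauchy converts that sum into $-\operatorname{Res}(\tilde h_k,0)-\operatorname{Res}(\tilde h_k,2)$, which directly yields the second summand in the brackets of \eqref{eq:lemIkfree} together with the $2^{3-k}$ constant. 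You instead compute each $\operatorname{Res}(\tilde f_k,w_j)=-s_j\ir/(4w_j^k)$ explicitly and then invoke the partial-fraction identity for $\tfrac{4(2\alpha w^2+\cdots)}{(w-2)R_\alpha(w)}$ to absorb $w_2^{-k}+w_3^{-k}$ back into a Taylor coefficient at $0$. These are two packagings of the same residue calculation for the same rational function; your factorisation $wR_\alpha(w)=\bigl((w-2)^2+4\ir\phi(w)\bigr)\bigl((w-2)^2-4\ir\phi(w)\bigr)$ is a clean, unified way to pin down the signs $s_j$ (the paper does the equivalent check branch-by-branch, carrying out only case~(i) and declaring the others analogous).
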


\begin{proof}
The task at hand is to evaluate the integral
\begin{equation}
\label{eq:proofIkfreeEq1}
\tilde{\mathcal I}_k:=\int \limits_1^{1/\alpha} \frac{\arctan\left(\frac{\left(t-2\right)^2}{4\sqrt{(1-\alpha t)\left(t-1\right)}} \right)}{t^{k+1}} \dr t
\end{equation}
for $k\in \mathbb{N}$.

In what follows, we assume, for simplicity, that $\alpha\ne\frac{1}{2}$ (this corresponds to $\lambda\ne0$). The case $\alpha=\frac{1}{2}$ can be handled in a similar fashion.

Observing that the inverse tangent tends to $\frac\pi2$ at the endpoints of the interval of integration and integrating by parts, one obtains
\begin{equation}
%\label{eq:proofIkfreeEq2}
\tilde{\mathcal I}_k=\frac\pi{2k}(1-\alpha^{k})-\frac2{k} \int \limits_1^{1/\alpha} \frac1{t^{k+1}}\left(\frac{(t-2)(2\alpha t^2+(\alpha-3)t+2(1-\alpha))}{R_\alpha(t)} \frac1{\sqrt{(1-\alpha t)\left(t-1\right)}} \right) \dr t\,,
\end{equation}
where $R_\alpha$ is defined in accordance with \eqref{eq:R(w)}.
Hence,  evaluating \eqref{eq:proofIkfreeEq1} reduces to evaluating 
\begin{equation}
\label{eq:proofIkfreeEq2}
\tilde{\mathcal J}_k:= \int \limits_1^{1/\alpha} \frac1{t^{k+1}}\left(\frac{(t-2)(2\alpha t^2+(\alpha-3)t+2(1-\alpha))}{R_\alpha(t)} \frac1{\sqrt{(1-\alpha t)\left(t-1\right)}} \right) \dr t\,.
\end{equation}
Let
\begin{equation}
\label{eq:proofIkfreeEq3}
\tilde{f}_k(w):=\frac1{w^{k+1}}\left(\frac{(w-2)(2\alpha w^2+(\alpha-3)w+2(1-\alpha))}{R_\alpha(w)} \frac1{\sqrt{(1-\alpha w)\left(w-1\right)}} \right),
\end{equation}
where we choose the branch of the square root in such a way that on the upper side of the branch cut $[0,\alpha^{-1}]$ we have $\sqrt{(1-\alpha w)\left(w-1\right)}>0$.
It is easy to see that the function $\tilde f_k$ is holomorphic in
\begin{equation*}
\mathbb{C}\setminus\left(\{0\}\cup \{w_j,  \ j=1,2,3\} \cup [0,\alpha^{-1}] \right),
\end{equation*}
with poles at
\begin{equation*}
w=0 \quad (\text{of order }k+1), \qquad w=w_j, \ j=1,2,3, \quad (\text{of order }1)\,,
\end{equation*}
where the $w_j$, $j=1,2,3$, are the roots $R_\alpha(w)$, with $0<w_1<1$ --- recall the discussion from Remark~\ref{rem:Rayleigh}.

The nature of the other two roots $w_j$, $j=2,3$, depends on $\alpha$.
Let $\alpha^*\in(0,1)$ be the unique real root of 
\begin{equation*}
\alpha^3-\frac{107}{64}\alpha^2+\frac{31}{32}\alpha-\frac{11}{64}=0.
\end{equation*}
Then we have the three cases \cite{RaBa}:
\begin{enumerate}[(i)]
\label{eq:proofIkfreeEq7}
\item for $0<\alpha<\alpha^*$ there are two complex-conjugate roots $w_2=\overline{w_3}$, $\operatorname{Im}w_2>0$;
\item for $\alpha=\alpha^*$ there are two coinciding real roots $w_2=w_3$;
\item for $\alpha^*<\alpha<1$ there are two distinct real roots $w_2<w_3$.
\end{enumerate}

We will carry out the proof for the case (i); the other two cases are analogous, and lead to the same final result.

\begin{figure}[htb]
\centering
\includegraphics{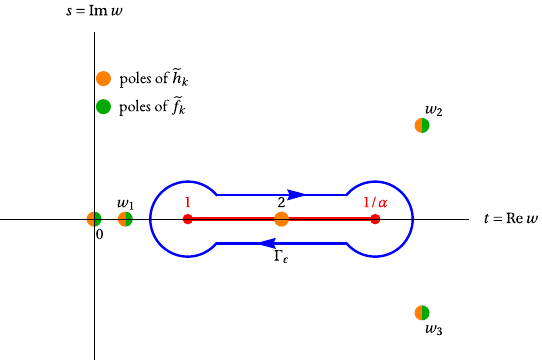}
\caption{The contour of integration for the free boundary problem. }\label{fig:contourfree}
\end{figure}

Let $\Gamma_\epsilon$ be a dog-bone contour as in Figure~\ref{fig:contourfree}.  Then
\begin{equation*}
\tilde{\mathcal J}_k=\frac12\lim_{\epsilon\to 0^+}\int_{\Gamma_\epsilon} \tilde{f}_k(w)\,\dr w\,.
\end{equation*}
Hence, since the function $\tilde f_k$ is regular at infinity,  Cauchy's Residue Theorem gives us
\begin{equation}
\label{eq:proofIkfreeEq9}
\tilde{\mathcal J}_k=\pi\ir\left(\mathrm{Res}(\tilde f_k,0) + \sum_{j=1}^3 \operatorname{Res}(\tilde f_k,w_j)\right)\,.
\end{equation}
From \eqref{eq:proofIkfreeEq3} we immediately obtain
\begin{equation}
\label{eq:proofIkfreeEq10}
\mathrm{Res}(\tilde f_k,0) =-\frac{\ir}{k!}\left.\frac{\dr^{k}}{\dr t^k}
\left(
\frac{(t-2)(2\alpha t^2+(\alpha-3)t+2(1-\alpha))}{R_\alpha(t)} \frac1{\sqrt{(1-\alpha t)\left(1-t\right)}}
\right)
\right|_{t=0}\,.
\end{equation}

In what follows, we evaluate
\begin{equation}
\label{eq:proofIkfreeEq11}
 \operatorname{Res}(\tilde f_k,w_2)+ \operatorname{Res}(\tilde f_k,w_3)-
  \operatorname{Res}(\tilde f_k,w_1)
\end{equation}
and
\begin{equation}
\label{eq:proofIkfreeEq12}
2\operatorname{Res}(\tilde f_k,w_1)\,.
\end{equation}
separately and then add the results together.

Let us start with \eqref{eq:proofIkfreeEq11}.  In view of the equivalence between \eqref{eq:Rayleigh} and \eqref{eq:RayleighEquiv},  and our choice of branch of the square root, it is not hard to check that
\begin{equation}
\label{eq:proofIkfreeEq13}
\left.\sqrt{(1-\alpha w)\left(w-1\right)}\right|_{w=w_1}=\frac{\ir}{4}(w_1-2)^2, 
\end{equation}
\begin{equation*}
\left.\sqrt{(1-\alpha w)\left(w-1\right)}\right|_{w=w_j}=-\frac{\ir}{4}(w_j-2)^2, \qquad j=2,3\,.
\end{equation*}
Hence,  one can recast \eqref{eq:proofIkfreeEq11} as
\begin{equation}
\label{eq:proofIkfreeEq15}
 \operatorname{Res}(\tilde f_k,w_2)+ \operatorname{Res}(\tilde f_k,w_3)-
  \operatorname{Res}(\tilde f_k,w_1)
 =
\sum_{j=1}^3 \operatorname{Res}(\tilde h_k,w_j),
\end{equation}
where
\begin{equation}
\label{eq:proofIkfreeEq16}
\tilde h_k(w):=\frac{4\ir}{w^{k+1}}\left(\frac{2\alpha w^2+(\alpha-3)w+2(1-\alpha)}{(w-2)R_\alpha(w)} \right).
\end{equation}
Now, $\tilde h_k$ is a meromorphic function regular at infinity,  with simple poles at $w=2$ and $w=w_j$, $j=1, 2, 3$, and a pole of order $k+1$ at $w=0$. Therefore, Cauchy's Residue Theorem implies
\begin{equation}
\label{eq:proofIkfreeEq17}
\sum_{j=1}^3 \operatorname{Res}(\tilde h_k,w_j)=-\operatorname{Res}(\tilde h_k,0)-\operatorname{Res}(\tilde h_k,2)\,.
\end{equation}
Combining \eqref{eq:proofIkfreeEq17} and \eqref{eq:proofIkfreeEq15} with account of \eqref{eq:proofIkfreeEq16},  we obtain
\begin{equation}
\begin{split}
\label{eq:proofIkfreeEq18}
&\operatorname{Res}(\tilde f_k,w_2)+ \operatorname{Res}(\tilde f_k,w_3)-
  \operatorname{Res}(\tilde f_k,w_1)
 \\
 &=
 -\frac{4\ir}{k!}\frac{\dr^k}{\dr t^k}\left.\left(\frac{2\alpha t^2+(\alpha-3)t+2(1-\alpha)}{(t-2)R_\alpha(t)} \right)\right|_{t=0}
+ \frac{\ir}{2^{k}}\,.
\end{split}
\end{equation}

Finally, let us deal with \eqref{eq:proofIkfreeEq12}.  Using the fact that $w=w_1$ satisfies \eqref{eq:Rayleigh} and the elementary Vieta's formulae
\begin{equation*}
w_1+w_2+w_3=8, \qquad w_1 w_2 w_3=-16(\alpha-1),
\end{equation*}
one can establish via a straightforward calculation that
\begin{equation}
\label{eq:proofIkfreeEq19}
w_1(w_1-w_2)(w_1-w_3)(w_1-2)=16\left(2\alpha w_1^2+(\alpha-3)w_1+2(1-\alpha)\right)\,.
\end{equation}

Formulae \eqref{eq:proofIkfreeEq13} and \eqref{eq:proofIkfreeEq19} imply
\begin{equation}
\label{eq:proofIkfreeEq20}
2\operatorname{Res}(\tilde f_k,w_1)=-\frac\ir{2 w_1^k}\,.
\end{equation}

Substituting \eqref{eq:proofIkfreeEq10}, \eqref{eq:proofIkfreeEq18} and \eqref{eq:proofIkfreeEq20} into \eqref{eq:proofIkfreeEq9} and then into  \eqref{eq:proofIkfreeEq2} we obtain
\begin{equation}
\begin{split}
\label{eq:proofIkfreeEq22}
\tilde{\mathcal{I}}_k=
&-\frac{2\pi}{k\,k!}\left.\frac{\dr^{k}}{\dr t^k}
\left(
\frac{(t-2)(2\alpha t^2+(\alpha-3)t+2(1-\alpha))}{R_\alpha(t)}
 \left(
\frac1{\sqrt{(1-\alpha t)\left(1-t\right)}}
+
\frac4{(t-2)^2}
\right)
\right)
\right|_{t=0}
\\
&+\frac{2\pi}{k2^{k}}
+
\frac\pi{2k}(1-\alpha^{k})
-\frac\pi{k w_1^k}\,.
\end{split}
\end{equation}
Formula \eqref{eq:lemIkfree} now follows from \eqref{eq:proofIkfreeEq22}
after rescaling and minimal simplifications with account of $w_1=\gamma_R^2$.
\end{proof}

Lemma~\ref{lem:Ikfree} and \eqref{eq:chvarfreeodd} immediately imply \eqref{eq:maintheoremfreeOdd}.

\end{appendices} 
\newpage

\end{document}